\documentclass[12pt,oneside,onecolumn,open]{article}
\usepackage[latin1]{inputenc}
\usepackage[english]{babel}
\usepackage{amsmath}
\usepackage{amsthm}
\usepackage{amsfonts}
\usepackage{amssymb}
\usepackage{graphicx}
\usepackage{hyperref}
\hypersetup{
    colorlinks=true,
    linkcolor=blue,
    filecolor=magenta,      
    urlcolor=cyan,
    pdftitle={Overleaf Example},
    pdfpagemode=FullScreen,
    }

\usepackage[left=2.5cm,right=2.5cm,top=3cm,bottom=3cm]{geometry}
\usepackage[usenames]{color}
\usepackage[titletoc,toc,page]{appendix}

\newtheorem{teo}{Theorem}[section]
\newtheorem{defi}[teo]{Definition}

\newtheorem{propo}[teo]{Proposition}
\newtheorem{obs}[teo]{Remark}
\newtheorem{cor}[teo]{Corollary}
\newtheorem{lema}[teo]{Lemma}
\usepackage{authblk}
\author[1]{Miguel Ballesteros\footnote{miguel.ballesteros@iimas.unam.mx}}
\author[2]{Gerardo Franco C\'ordova}
\author[1]{Jonathan Gil}
\author[1]{Ivan Naumkin}
\affil[1]{Instituto de Investigaciones en Matem\'aticas Aplicadas y en Sistemas (IIMAS), Universidad Nacional Aut\'onoma de M\'exico (UNAM),  M\'exico.}
\affil[2]{Institute of Analysis and Algebra, Technische Universit\"at Braunschweig,  Germany.}

\title{   On Matrix Valued Schr\"odinger Operators on the Discrete Real Line: Resolvent Boundary Values, Limiting Absorption Principle, H\"older Regularity and Dispersive Estimates      }
\begin{document}

\maketitle

\begin{abstract}

This work establishes new results on spectral theory and time evolution for matrix-valued discrete Schr\"odinger operators on the space of square-summable matrix sequences. The matrix-valued formalism is employed to streamline notation, offering a more elegant alternative to the equivalent vector-valued framework with matrix potentials.
Our main contributions are threefold: first, we derive an explicit Wronskian-based representation for the resolvent's integral kernel; second, we prove H\"older continuity for the resolvent's boundary values; and third, we establish dispersive estimates for the time evolution.
Our approach begins with the construction of Jost solutions using Volterra equations and the transmutation operator, leading to proofs of their H\"older regularity and bounds in Wiener algebra norms. From these solutions, we obtain the explicit kernel representation. This explicit characterization - a distinctive feature of the one-dimensional setting - enables the direct computation of the resolvent's boundary values.
We subsequently establish a limiting absorption principle and demonstrate the H\"older continuity of these boundary values, achieving an improvement over classical results obtained through abstract higher-dimensional methods. Finally, this detailed resolvent characterization is leveraged to prove the dispersive estimates for the time evolution of the system.

\textbf{Keywords} Scattering Theory;  Spectral Theory;  Discrete Schr\"odinger Operators; Limiting Absorption Principle.
 \textbf{Mathematics Subject Classification} 81Q10 35J08 47A40 81U05 47B36

\end{abstract}

\section{Introduction}
In this work, we study boundary values of the resolvent, limiting absorption principle (together with H\"older continuity), and dispersive estimates for discrete matrix-valued Schr\"odinger operators. These operators are of the form
\begin{align}
    H = H_0 + V
\end{align}
where \( H_{0} \) is the discrete (Laplacian) Hamiltonian and \( V \) is a self-adjoint matrix multiplication operator (referred to as the potential $V$). These operators are defined in the space $\ell ^{2}$ of matrix-valued sequences that are square summable (with respect to the Frobenius inner product). See Section \ref{sec1.1} for the precise definition of these operators.

For the scalar case, the direct and inverse scattering theory has been studied by several authors (see \cite{MR332065}, \cite{MR405160}, \cite{MR454437}, \cite{MR559594}, \cite{MR499269}, \cite{MR708000}, \cite{MR730128}, and \cite{MR1896882}, \cite{MR2329923}, \cite{MR2404173}, \cite{MR2448988}). Scattering theory for vector valued Schr\"odinger operators is addressed in \cite{MR557790} and more recently in \cite{MR4959158}, \cite{MR4192212}, \cite{MR4385984}, and \cite{MR4760547}. Results on dispersive estimates and limiting absorption principle in the scalar case are addressed in \cite{MR3433284}.  There are also important contributions to the direct and inverse scattering theory for discrete operators in higher dimensions, including works that establish resolvent estimates; see, for instance, \cite{MR4018760} and \cite{MR2913620}.

It is important to clarify that, although one can obtain estimates for the Green function of the perturbed Hamiltonian in terms of the free operator (using perturbative arguments), in our work, we derive a non-perturbative formula for the boundary values of the resolvent in the continuous spectrum. Our formula is highly useful because it is expressed in terms of the Wronskian. The fact that we are working with matrices (not scalars) makes our proofs considerably more technical than in the scalar case since non-commutativity plays a prominent role. Our contribution regarding this point is the matrix-valued case ($L \geq 2$). To the best of our knowledge, explicit formulas for the integral kernel of Jacobi operators with matrix-valued coefficients have not been previously reported. For the case $L=1$, formulas for the resolvent kernel in terms of the Wronskian are well established. We believe that extending these explicit formulas to the matrix-valued case is a valuable new contribution. The derivation of these boundary values is presented in Theorem \ref{LAPmain}. Our formulas allow us to prove the limiting absorption principle in Sections \ref{sectionCP}, \ref{sectionVE}, \ref{sectionWR}, \ref{SM}, \ref{GF} and \ref{sectionLAP}. In the scalar case, this is addressed in \cite{MR3433284} and \cite{MR2529950}. In comparison to \cite{MR3433284} and \cite{MR2529950}, here we also prove H\"older continuity and H\"older-continuous error bounds for the convergence rate to the boundary values, which require several new ingredients. In the one-dimensional case, H\"older continuity uses different techniques than in higher dimensions. To the best of our knowledge, in the present paper we address for the first time H\"older continuity in this setting (one dimensional and discrete). Our results (Theorem \ref{LAPmain}) improve the results of the classic book of Yafaev \cite{MR2598115}, where higher dimensions and the continuous case are considered. 

The limiting absorption principle refers to various results concerning the existence of boundary values of the resolvent operator
\begin{align}\label{RES}
R_{H}(E)=(H-E)^{-1}, \quad E \in \mathbb{C}^{\pm},
\end{align}
on the continuous spectrum of the operator \( H \). It is clear that when $E$ approaches spectral points of $E$, the norms of the corresponding resolvent operator blow up. However, if it is applied to states featuring enough decay in configuration space, the limit does exist. We introduce a method that, to the best of our knowledge, has not been used before and yields a substantial improvement of the H\"older continuity exponent via an explicit representation of the resolvent kernel. The optimal H\"older exponent obtained with the standard method is $\rho/2$, where the decay rate of the perturbation (the potential $V$) is of order $\left(\frac{1}{|n|}\right)^{1+\rho}$ as $|n| \to \infty$. This method uses the second resolvent identity
\[
R = R_0 - R_0 V R, \qquad \text{i.e. } R = R_0(1 + V R_0)^{-1}.
\]
The invertibility of $1 + V R_0$ plays a fundamental role. First, one must ensure that $V R_0$ is compact in an appropriately weighted $\ell^2$ space (more precisely we substitute $R$ by $R^\alpha=T_{-\alpha} R T_{-\alpha}$, where $(T_{-\alpha}u)(n)=(1+|n|)^{-\alpha}$, and $R_0$ by $R_0^\alpha = T_{-\alpha}R_0T_{-\alpha}$, and we avoid weighted spaces), enforcing the restriction that the H\"older exponent cannot exceed $\rho/2$. Furthermore, the standard method only implies that the set (inside the spectrum of the free operator) where $1 + V R_0$ fails to be invertible has zero Lebesgue measure (in the $\ell^2$ space we equivalently seek for invertibility of $1+T_{2\alpha}VR_0^\alpha$). In general, a considerable amount of work must be performed in order to characterize this set. In certain specific equations this set is finite or even empty, but this has to be proved for each case.

\textbf{Our approach:}  
In contrast to the standard approach, our method yields to a substantial improvement: our H\"older exponent is essentially equal to $\rho$, instead of $\rho/2$.

We introduce a method that was not used in this context to prove the H\"older continuity: we provide explicit expressions to the Green's function in terms of the Jost solutions. This is essentially equivalent to provide explicit solutions to the time evolution of the Schr\"odinger equation. Since our approach solves the full time evolution problem (instead of merely providing estimations), we significally improve the precision of the estimates.
Moreover, in our case we do not need to characterize the spectral points where $1 + R_0 V$ is not invertible. Our explicit formulas for the Green function allow us to proceed directly, without relying on the invertibility of $1+R_0V$.

The boundary formulas we derive allow us to estimate the time evolution of the solutions to the Schr\"odinger equation, when we restrict the initial states to belong to the absolutely continuous part of the Hilbert space. These estimates are called dispersive estimates. In this work (Section \ref{dispersiveesimates}), we prove dispersive estimates in $ \ell^\infty(\mathbb{Z}, \mathcal{M}_L) $ (see \eqref{ellpspaces}) for the time evolution. In Theorem \ref{EST}, we prove that
\begin{equation}
\label{eq:decay}
    \|e^{-itH}P_{ac}\|_{\ell^{1} \to \ell^{\infty}} = O(t^{-\frac{1}{3}}), \quad t \to \infty,
\end{equation}
where $P_{ac}$ is the projection on the absolutely continuous part of the spectrum and $ \| \cdot \|_{\ell^{1} \to \ell^{\infty}} $ is the norm in the space of operators from $ \ell^1(\mathbb{Z}, \mathcal{M}_L)$ to $ \ell^\infty(\mathbb{Z}, \mathcal{M}_L)$, see \eqref{ellpspaces}. Previous results are only available for the scalar case: In \cite{MR2468536}, stronger conditions on the potential are assumed ($ |V(n)| = O((1 + |n|)^{-\beta}), \: \beta > 5$), and \cite{MR2529950} assumes that $ \sum_{n \in \mathbb{Z}} |(1 + n)^{2}V(n)| < \infty $. Similar results as ours for the scalar case are presented in \cite{MR3433284} and \cite{MR3404595}. The periodic case is addressed in \cite{MR4070305}, \cite{MR4335875} and \cite{MR4944037}.

This work presents a distinct approach to the vector-valued Schr\"odinger operator by considering its action on the space $\ell ^{2}(\mathbb{Z},\mathcal{M}_L)$ of square-summable matrix-valued sequences. Our contribution is precisely the case $L \geq 2$. Moreover, our proofs rely crucially on the explicit formula for the resolvent kernel that we derive for the first time. The presence of non-commutative terms makes the derivation significantly more involved than in the scalar case. Theorem \ref{GreenIntro} provides the new structural tool enabling the extension of scalar arguments to the matrix-valued setting. As detailed in Section \ref{sec1.1}, we equip this space with a Hilbert space structure using the Frobenius inner product for matrices. This methodological choice, which is absent from the existing literature, provides a unifying formalism that simplifies the notation and allows for a cleaner and more clear exposition of our results.

For the continuous (differential) case, there is extensive literature: the scalar case is presented in \cite{MR2096737}, \cite{MR2127577}, \cite{MR1736195}, \cite{MR2096737}, \cite{MR3535363}, \cite{MR1980087}, and vector-valued Schr\"odinger operator are studied in \cite{MR4238229}.

The main results of this paper are presented in Section \ref{MR}, while their proofs are derived in the other sections. Section \ref{sec1.1} introduces the necessary notation for the understanding of the statements of our main theorems. An important ingredient in our proofs is an explicit formula for the resolvent operator in terms of Jost Solutions, and the corresponding boundary values (Theorems \ref{GreenIntro}, \ref{Greenlimitintro}), all other results of this paper make use this formula. The proofs of Theorems \ref{GreenIntro}, \ref{Greenlimitintro} are presented in Sections \ref{EP}, \ref{sectionWR} and \ref{GF} (where Sections \ref{EP} and \ref{sectionWR} constitute important technical ingredients for the main proofs). The other two main results of this article are the limiting absorption principle (Theorem \ref{LAPmain2}), where H\"older regularity is addressed, and the dispersive estimates (Theorem \ref{teo:estimacion}). The proof of the former is the content of Section \ref{sectionLAP} and the proof of the latter is obtained in Sections \ref{PJS}, \ref{SM}, and \ref{dispersiveesimates}, where Sections \ref{PJS}, \ref{SM} are devoted to technical aspects.

\begin{obs} \label{COP}

Observe that \( \mathcal{M}_L \) can be embedded into \( B(\mathcal{M}_L) \) via multiplication operators, thereby inducing an operator norm on \( \mathcal{M}_L \). Since we are working in finite dimensions, the norms on \( \mathcal{M}_L \)-whether viewed as an operator or a matrix-are equivalent.

In this manuscript, constants are denoted by $C( \cdot, \cdot, \cdot,  \cdots ) ,$ where the parameters they depend on are explicitly indicated. For simplicity, we use this generic notation rather than introducing a distinct symbol for each constant. The value of these constants may change from line to line and can even differ on either side of an equation or inequality.

\end{obs}

\subsection{Operators and difference equations}\label{sec1.1}

For every normed vector space $(Y, \| \cdot  \|_{Y})$, we denote by $ B(Y) $ the vector space of bounded operators $ T :  Y \to Y $ equipped with the operator  norm:  
\begin{align}
    \|  T \|_{B(Y)} \equiv    \|  T \|. 
    \end{align}
We routinely omit the subscript $B(Y)$ if no confusion arises. The space of $L \times L$ complex matrices is denoted by $\mathcal{M}_L \equiv B( \mathbb{C}^L) $, and $\mathcal{M}_L^{\mathbb{Z}}$ represents the space of functions (sequences) from $\mathbb{Z}$ to $\mathcal{M}_L$. The difference expression $\tau_0: \mathcal{M}_L^{\mathbb{Z}} \to \mathcal{M}_L^{\mathbb{Z}} $ is defined by

\begin{align}\label{Edso1}
(\tau_{0}u)(n ) := u(n+1) + u(n-1).      
\end{align}
In this manuscript, we consider a sequence of self-adjoint matrices $V=(V(n))_{n\in \mathbb{Z}}\in \mathcal{M}_{L}^{\mathbb{Z}} $ such that 
\begin{align} \label{firstfinitemoment}
  \|  V \|_{0}  :=      \sum_{n}   \| V(n) \| < \infty.    
\end{align}
We define for $ \rho > 0 $  (here we regard $V(n)$
 as a multiplication operator on $ \mathcal{M}_L$)
\begin{align} \label{normV1}
{  \|  V \|_{\rho}  :=       \sum_{n}(|n| + 1) }^{\rho}\| V(n) \|,   \hspace{1cm}       \|  V \|_{\infty}  :=\sup_{n}   \| V(n) \|  .  \end{align} 
We analyze the difference operator   $\tau: \mathcal{M}_L^{\mathbb{Z}} \to  \mathcal{M}_L^{\mathbb{Z}} $, given by 
\begin{align}\label{Edso1}
(\tau u)(n ) :=   (\tau_{0}u)(n ) + V(n) u(n).    
\end{align}
Let 
\begin{align}\label{ell2}
\mathcal{H} : = \ell^{2}(\mathbb{Z}, \mathcal{M}_{L})
\end{align}
be the Hilbert space of square-summable sequences, endowed with the scalar product defined for all \(u, v \in \mathcal{H}\) by
\begin{align}
    \left<u,v\right>_{\ell ^{2}}:=\sum _{n\in \mathbb{Z}}\left<u(n), v(n)\right>_{\mathcal{M}_L},
\end{align}
where the product for \(A, B \in \mathcal{M}_L\) is the standard Frobenius product, defined as
\begin{align}
    \left<A,B\right>_{\mathcal{M}_L}=\operatorname{tr} (B^{*}A). 
\end{align}

Similarly, we denote by $\ell^{p}(\mathbb{Z}, \mathcal{M}_{L})$ the vector spaces of sequences with values in $\mathcal{M}_{L}$ that are $p$-summable for $1 \le p < \infty$, and bounded for $p = \infty$, equipped with the respective norms:
\begin{align}\label{ellpspaces}
    \|u\|_{\ell ^{p}}:=  \Big ( \sum _n \|u(n)\|_{\mathcal{M}_L}^{p} \Big )^{1/p}, \hspace{1cm}
\|u\|_{\ell ^{\infty}}:=\sup _n \{\|u(n)\|_{\mathcal{M}_L}\}.
\end{align}
\color{black}

We define the Schr\"odinger operator $H$ and the free Schr\"odinger operator $H_0$ as the restrictions of $\tau$ and $\tau_0$ to $\mathcal{H}$, respectively:
\begin{align}
    H := \tau|_{ \mathcal{H} },    \qquad
    H_0 := \tau_0|_{ \mathcal{H} }.
\end{align}
Moreover, we employ an abuse of notation and identify
\begin{align}
    V \equiv  H-H_0,
\end{align}
and we call $V$ the interaction. All these operators are bounded and self-adjoint. Indeed, one can readily verify that
 \begin{equation}
        \|H\| \leq (2+\|V\|_{\infty }),\,\,\, u\in \ell ^{2}(\mathbb{Z}, \mathcal{M}_{L}). 
    \end{equation}
Furthermore, \(V\) is a compact operator, being the norm limit of the finite-rank operators \(V \chi_{\{|n| \leq M\}}\). Consequently, the essential spectrum of \(H\) and \(H_0\) coincide:
$ 
\sigma _{\text{ess}}(H) = \sigma _{\text{ess}}(H_{0}).
$

We define the discrete Fourier transform as the operator
$ 
\mathcal{F}: \ell^{2}(\mathbb{Z}, \mathcal{M}_{L}) \longrightarrow L^{2}([-\pi, \pi], \mathcal{M}_{L})
$ 
given by
\begin{equation}\label{Fourier}
(\mathcal{F}u)(k) := \frac{1}{\sqrt{2 \pi }} \sum_{n \in \mathbb{Z}} u(n)e^{ik n}, \quad k \in  [-\pi, \pi],
\end{equation} 
for $u = \big(u(n)\big)_{n \in \mathbb{Z}}$.
 The inverse of the Fourier transform is given by
\begin{equation}
    \mathcal{F}^{-1}\phi (n)= \frac{1}{\sqrt{2 \pi }}     \int _{-\pi }^{\pi }e^{-ink}\phi (k )dk .
\end{equation}
 The discrete Fourier transform is key to studying the free Schr\"odinger operator \(H_0\) because it diagonalizes this operator. Specifically,
\begin{equation} \label{Fourier transform}
     \mathcal{F} H_{0} \mathcal{F}^{-1} \phi (k) = 2\cos(k) \phi(k),
\end{equation}
see \cite{MR4385984}, Section 1.1.1. The unitarity of the Fourier transform implies that the spectrum of \(H_0\) is purely absolutely continuous and given by \(\sigma(H_{0}) = [-2,2]\).

The spectrum of the operator \(H\),  consists of the interval \([-2, 2]\) along with finitely many eigenvalues located outside this interval, see \cite{MR4760547}. Moreover, \cite{MR4760547} proves that the point spectrum \(\sigma_p(H)\) is finite and does not intersect \([-2,2]\).

\subsection{Main results}\label{MR}

A useful parametrization of the complex plane is given by the Zhukovsky  transform:
\begin{align}\label{J}
 \mathbb{C} \ni  E : = J(z) = z+ 1/z,    \hspace{1cm}z\in \overline{\mathbb{D}}\setminus \{0\},  \: \: \text{where} \: 
 \: 
 \: \mathbb{D}:=\{z\in \mathbb{C}:|z|< 1\}. 
 \end{align}

The restriction of the Zhukovsky map \(J\) to \(\mathbb{D} \setminus \{ 0 \}\) (see \eqref{J}) is invertible onto \(\mathbb{C} \setminus [-2,2]\). We denote its inverse by \( r : \mathbb{C} \setminus [-2,2] \to \mathbb{D}\setminus \{0 \} \).
We extend this function to \(\mathbb{C}\) by defining
\begin{align} \label{r_+}
    r_+(z) = \begin{cases}
        r(z), & z \notin [-2,2], \\
        e^{-ik}, & z = 2 \cos(k), \quad k \in [0,\pi].
    \end{cases}
\end{align}
\color{black}
This extension is essential for analyzing the boundary values of the resolvent operator's matrix elements (see Section \ref{ResGreen}).

For every \(E \in \mathbb{C}\), the difference equations
\begin{align}\label{eigen}
 \tau u = E u, \qquad \tau_0 u = E u,
 \end{align}
are called generalized eigenvalue problems, and their solutions are generalized eigenvectors. When a solution \(u\) belongs to \(\mathcal{H}\), it is an eigenvector of the Hamiltonians \(H\) and \(H_0\), respectively. For \( z \in \overline{\mathbb{D}} \setminus \{0\} \), we define the function \( u^z_0 : \mathbb{Z} \to \mathcal{M}_L \) by
\begin{align}\label{JostLibre}
    u^z_0(n) = z^n I,
\end{align}
where \(I\) is the \(L\times L\) identity matrix. This function solves the generalized eigenvalue problem for the free Hamiltonian:
\begin{align}
    H_0 u_0^z = E u_0^z, \qquad \text{with } E = J(z).
\end{align}
The function \( u_0^z \) is called a free Jost solution. For the perturbed operator, Jost solutions are generalized eigenvectors that behave asymptotically like the free Jost solutions.

\begin{defi}[Jost Solutions]

\label{jost}  For every \( z \in \overline{\mathbb{D}} \setminus \{0\} \) with \( E = J(z) \), we denote by \( u_+^z \) and \( u_-^{1/z} \) the solutions to the generalized eigenvalue problem \( \tau u = E u \) that satisfy the asymptotic conditions
\begin{equation}
\label{eq:as}
    u_{+}^z(n)= z^{n}(I + o(1)), \quad \text{as } n \to +\infty, \qquad
    u_-^{1/z}(n) = (1/z)^n(I + o(1)), \quad \text{as } n \to -\infty.
\end{equation}
These solutions are called the Jost solutions.
\end{defi}
\begin{obs}
    In the literature, it is also common to define the Jost solutions as the generalized eigenvalues that satisfy the asymptotic behaviour:
    \begin{equation}
\label{eq:as}
  U_{n}^{+}(z)=z^n(I+o(1)), \ \text{as} \ n \to + \infty,
    \qquad
   U_n^{-}(z)= z^{-n} (I+o(1)), \ \text{as} \ n \to -\infty.
\end{equation}
    
    Our notation ($u_+^z, u_-^{1/z}$) is adopted primarily because the asymptotic behavior in $z$ is directly read for the notation itself: by using the variables $z$ and $1/z$, the notation itself suggests the nature of the behavior in the limits $n \to \pm \infty$, which aids the reader in remembering these asymptotic properties.
\end{obs}
\color{black}

We recall that the resolvent operator of \(H\) (see Definition \ref{PRdefi}, \eqref{RES}) is given by
\begin{align}
    R_{H}(E) = (H - E)^{-1}, \quad E \in \mathbb{C} \setminus \sigma(H).
\end{align}
Our first main result, Theorem \ref{formulaGreenmatrixelements}, provides an explicit formula for the matrix elements \(\{[R_{H}(E)]_{s,r}\}_{r,s\in \mathbb{Z}}\) (see \eqref{me}) of \(R_{H}(E)\) in terms of the Jost solutions. Its proof is presented in Section \ref{GF}.

\begin{defi}\label{wronskidefi}
For two sequences $v, w \in \mathcal{M}_L^{\mathbb{Z}}$, we define their Wronskian as the sequence given by
\begin{align}
W(v, w)(n) := i\big( v(n+1)^{} w(n) - v(n)^{} w(n+1) \big), \qquad n \in \mathbb{Z}.
\end{align}
\end{defi}
For $u, v \in \mathcal{M}_{L}^{\mathbb{Z}}$ satisfying $\tau u = Eu$ and $\tau v = \overline{E}v$, the Wronskian is independent of $n$, in this case we identify
\begin{align}
W(v,u) \equiv W(v,u)(n).
\end{align}
      From the definition of the Wronskian, for the sequences $u$ and $v$, we obtain:
\begin{align}
    W(uM, v + w)(n) = M^{*} \big( W(u, v)(n) + W(u, w)(n) \big) \\\notag
    W(u + v, w M)(n) = \big( W(u, w)(n) + W(v, w)(n) \big) M
\end{align}
and 
\begin{align}
    W(u,v)^{*}=W(v,w)
\end{align}
   
 Further details and properties are discussed in Section \ref{sectionWR}.

\begin{teo}\label{GreenIntro}
Let \(E \in \mathbb{C} \setminus \sigma(H)\) and set \(E = J(z)\). The kernel of the resolvent \(R_{H}(E)\) is given by
\begin{equation}
\label{eq:Greenintro}
[R_{H}(E)]_{s,r} =
 \begin{cases}
    -i\, u_{+}^{z}(s)\left(W(u_{-}^{1/\overline{z}}, u_{+}^{z})\right)^{-1} u_{-}^{1/\overline{z}}(r)^{*}, & s \geq r, \\[10pt]
    i\, u_{-}^{1/z}(s)\left(W(u_{+}^{\overline{z}}, u_{-}^{1/z})\right)^{-1} u_{+}^{\overline{z}}(r)^{*}, & s < r.
 \end{cases}
\end{equation}

\end{teo}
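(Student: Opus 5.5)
We prove the formula by writing down the candidate kernel $G(s,r)$ given by the right-hand side of \eqref{eq:Greenintro}. We then check three things: that it defines a bounded operator on $\mathcal{H}$, that it satisfies $(\tau - E)G(\cdot,r)=\delta_r I$ columnwise, and that it coincides with $R_H(E)$ by uniqueness. Throughout, $E=J(z)\notin\sigma(H)$ with $z\in\mathbb{D}\setminus\{0\}$ (so $|z|<1$), and $\overline{E}=J(\overline z)$. Hence $u_+^{\overline z}$ and $u_-^{1/\overline z}$ solve $\tau v=\overline{E} v$. Since $V(n)$ is self-adjoint, for $\tau u=Eu$ and $\tau v=\overline E v$ the Wronskian $W(v,u)$ is $n$-independent, as recorded after Definition \ref{wronskidefi}.

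\textbf{Step 1: invertibility of the Wronskians.} We show that $W(u_-^{1/\overline z},u_+^{z})$ is invertible. Suppose $W(u_-^{1/\overline z},u_+^z)c=0$ for some $c\in\mathbb{C}^L$. I would use the standard fact, for a second order matrix recursion with invertible leading coefficient, that the columns $u_+^z(\cdot)c$ and the solutions of the adjoint problem are controlled by the pair $(u(n),u(n+1))$. The goal is to deduce that $u_+^z c$ is also a linear combination of the columns of $u_-^{1/z}$, i.e.\ that it decays at both ends; the asymptotics \eqref{eq:as} give $|z^n|\to 0$ as $n\to+\infty$ and $|z^{-n}|\to0$ as $n\to-\infty$. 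Concretely, take the $2L$-dimensional solution space of $\tau u=Eu$ (vector-valued). The columns of $u_+^z$ span the $L$-dimensional subspace $S_+$ decaying at $+\infty$, and those of $u_-^{1/z}$ span $S_-$ decaying at $-\infty$. The Wronskian pairing $\{v,u\}=i(v(n+1)^*u(n)-v(n)^*u(n+1))$ between solutions at $\overline E$ and at $E$ is a nondegenerate sesquilinear form. Moreover, $S_-^{\overline E}$ (the columns of $u_-^{1/\overline z}$) is Lagrangian-orthogonal to $S_-^{E}$: evaluate the Wronskian at $n\to-\infty$, where both factors decay. Therefore $\ker W(u_-^{1/\overline z},u_+^z)\neq 0$ forces a nonzero vector in $S_+\cap S_-$, which is an $\ell^2$ eigenvector with eigenvalue $E$. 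This contradicts $E\notin\sigma(H)$. The same argument handles $W(u_+^{\overline z},u_-^{1/z})$. One also gets the relation $W(u_+^{\overline z},u_-^{1/z})=-W(u_-^{1/\overline z},u_+^z)^*$-type identities from $W(u,v)^*=W(v,u)$ (up to sign conventions), which will be needed in Step 3.

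\textbf{Step 2: decay and boundedness.} By \eqref{eq:as}, $\|u_+^z(s)\|\le C|z|^s$ for $s\ge0$, and the solution can be continued to $s<0$ with at most $|z|^{s}$ growth. Analogous bounds hold for $u_-$. Hence for $s\ge r$ we have $\|G(s,r)\|\le C|z|^{s-r}$, uniformly in $s,r$ after splitting into sign cases, and symmetrically for $s<r$. Since $|z|<1$, Schur's test shows that $G$ defines a bounded operator on $\mathcal{H}$ acting by left multiplication, $(Gu)(s)=\sum_r G(s,r)u(r)$.

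\textbf{Step 3: $(\tau-E)G=I$ and conclusion.} For fixed $r$, each branch of $G(\cdot,r)$ solves $\tau u=Eu$ in $s$ away from the diagonal. The only nontrivial checks are at $s=r$ and $s=r-1$. First, continuity of the two formulas at $s=r$ requires
\[
 -u_+^z(r)W_1^{-1}u_-^{1/\overline z}(r)^*=u_-^{1/z}(r)W_2^{-1}u_+^{\overline z}(r)^*-(\text{jump}),
\]
and second, the equation at $s=r$ must produce $\delta_{rr}I$. Both reduce to the matrix identity
\[
 u_+^z(n)W_1^{-1}u_-^{1/\overline z}(n+1)^*-u_-^{1/z}(n)W_2^{-1}u_+^{\overline z}(n+1)^*\propto I,
\]
together with its companion obtained by swapping $n$ and $n+1$. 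I expect this to be the main obstacle, since non-commutativity prevents the scalar cancellation. I would prove it by forming the $2L\times2L$ fundamental matrices $\Phi(n)=\begin{pmatrix}u_+^z(n)&u_-^{1/z}(n)\\u_+^z(n+1)&u_-^{1/z}(n+1)\end{pmatrix}$ and $\Psi$ analogously at $\overline E$. The Wronskian constancy says $\Psi(n)^*\mathcal{J}\Phi(n)=\mathrm{diag}$-type constant, with the off-diagonal blocks vanishing by the Lagrangian property from Step 1, where $\mathcal{J}=i\begin{pmatrix}0&-I\\I&0\end{pmatrix}$. Inverting gives $\Phi(n)\,D^{-1}\Psi(n)^*\mathcal{J}=I_{2L}$, and its off-diagonal blocks are exactly the required identities.

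Finally, $G$ is a bounded right inverse of $H-E$. Since $H-E$ is invertible, $G=R_H(E)$, and reading off matrix elements gives \eqref{eq:Greenintro}.
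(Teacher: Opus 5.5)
Your proof is correct, but it runs in the opposite direction from the paper's. The paper \emph{derives} the kernel. It starts from the actual resolvent and notes that for fixed $r$ the column $s\mapsto[R_H(E)]_{s,r}C$ is square summable and solves $\tau w=Ew$ off the diagonal. It then uses Proposition \ref{base} (Jost solutions plus the divergent solutions $v_\pm$ form a basis) to force this column to be $u_+^z(s)A$ for $s\ge r$ and $u_-^{z^{-1}}(s)B$ for $s\le r$. Finally it solves for $(A,-B)$ with the explicit inverse of $\Phi(u_+^z,u_-^{z^{-1}})(r)$ from Lemma \ref{aaaa}. Its proof that the Wronskians are invertible (Theorem \ref{Winvertible}) also goes through $v_+$.

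You instead \emph{verify} a candidate and let the invertibility of $H-E$ supply uniqueness. You get invertibility of the Wronskians from a dimension count: the pairing is nondegenerate, so the annihilator of the columns of $u_-^{1/\overline z}$ is $L$-dimensional; it contains the span of the columns of $u_-^{1/z}$, hence equals it. This avoids the divergent solutions altogether, which the paper imports from an external reference. In exchange, the paper's route explains why the formula must take this form. The algebraic core is the same block identity \eqref{wronskian_identity}, which the paper uses as a left inverse of $\Phi$ and you use as a right inverse.

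Your Step 2 is not needed. $G(\cdot,r)C\in\ell^2$ requires only the decay of $u_+^z$ at $+\infty$ and of $u_-^{1/z}$ at $-\infty$, and $(H-E)G(\cdot,r)C=\pi_r^*C$ then gives $G(\cdot,r)C=R_H(E)\pi_r^*C$ directly. This spares you the bound $\|u_+^z(s)\|\le C|z|^{s}$ for $s<0$, which you assert without proof (it does hold, e.g.\ by Lemma \ref{LAP1}).

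There are two slips in Step 3, neither fatal. Write $W_1=W(u_-^{1/\overline z},u_+^z)$ and $W_2=W(u_+^{\overline z},u_-^{1/z})$.
\begin{itemize}
\item If $\Psi$ has its columns ordered as $(u_+^{\overline z},u_-^{1/\overline z})$, then $\Psi^*\mathcal{J}\Phi$ has vanishing \emph{diagonal} blocks and off-diagonal blocks $W_2$, $W_1$. It is still invertible.
\item The two checks are not both of the form ``$\propto I$''. The check at $s=r-1$ is the jump-free identity $u_+^z(r)W_1^{-1}u_-^{1/\overline z}(r)^*+u_-^{1/z}(r)W_2^{-1}u_+^{\overline z}(r)^*=0$, which is an off-diagonal block of $\Phi D^{-1}\Psi^*\mathcal{J}=I_{2L}$. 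The check at $s=r$ is $i(u_+^z(n)W_1^{-1}u_-^{1/\overline z}(n+1)^*+u_-^{1/z}(n)W_2^{-1}u_+^{\overline z}(n+1)^*)=I$ with $n=r-1$. That is a diagonal block, and it carries a plus sign, not the minus in your display.
\end{itemize}
Both identities are contained in the $2L\times 2L$ identity, so the argument closes once all four blocks are read off.

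Finally, in Step 1, $H$ acts on matrix sequences, so the contradicting eigenvector should be taken as, e.g., $u_+^z(\cdot)\,c\,c^*$.
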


\color{black}
The continuity of the Jost solutions implies the following theorem (see Section \ref{GF} for the proof):
\begin{teo}\label{Greenlimitintro}
    For all $r,s \in \mathbb{Z}$ and $E \in (-2,2)$, the boundary values of the matrix elements $[R_{H}(\cdot)]_{r,s}$ of the resolvent operator exist and are given by:

    \begin{align}
        [R_{H}(E \pm i0)]_{s,r} &:= \lim_{\epsilon \to \pm 0} [R_{H}(E \pm i\epsilon)]_{s,r} = -iu_{+}^{z^{\pm 1}}(s)\left(W(u_{-}^{z^{\pm 1}}, u_{+}^{z^{\pm 1}})\right)^{-1} u_{-}^{z^{\pm 1}}(r)^{*}, \,\,\, s \geq r
    \end{align}

    \begin{align}
        [R_{H}(E \pm i0)]_{s,r} &:= \lim_{\epsilon \to \pm 0} [R_{H}(E \pm i\epsilon)]_{s,r} = iu_{-}^{z^{\mp 1}}(s)\left(W(u_+^{z^{\mp 1}}, u_-^{z^{\mp 1}})\right)^{-1} u_{+}^{z^{\mp 1}}(r)^{*},\,\,\, s<r
    \end{align}
    where $z = r^+(E) \in \mathbb{S}^{1}$.
\end{teo}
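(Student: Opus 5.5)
The plan is to pass to the limit $\epsilon\to0^{\pm}$ directly in the formula of Theorem \ref{GreenIntro}, using continuity of the Jost solutions in $z$ up to the unit circle and invertibility of the limiting Wronskian. First I identify the relevant $z$. For $E\pm i\epsilon$ with $\epsilon>0$ small, set $z_\epsilon=r(E\pm i\epsilon)\in\mathbb{D}\setminus\{0\}$. Since $J(z)=z+1/z$, with $J(e^{-ik})=2\cos k$, a short computation of the sign of $\operatorname{Im}J(z)$ for $|z|<1$ shows $z_\epsilon\to r_+(E)=e^{-ik}$ for one sign of $\epsilon$ and $z_\epsilon\to e^{ik}=1/r_+(E)$ for the other. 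This accounts for the exponents $z^{\pm1}$ in the statement. I also note that $1/\overline{z_\epsilon}\to 1/\overline{z^{\pm1}}=z^{\pm1}$, because $|z|=1$, and $\overline{z_\epsilon}\to z^{\mp1}$. Substituting these limits into \eqref{eq:Greenintro} reproduces exactly the indices appearing in Theorem \ref{Greenlimitintro}, with $u_-^{1/\overline{z}}$ turning into $u_-^{z^{\pm1}}$ and so on.

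Second, I need pointwise continuity of the Jost solutions on $\overline{\mathbb{D}}\setminus\{0\}$. Fix $n$. The Jost solution $u_+^z(n)$, and similarly $u_-^{1/z}(n)$, is built from the Volterra equation
\[
u_+^z(n)=z^n I-\sum_{m>n}\frac{z^{m-n}-z^{n-m}}{z-z^{-1}}\,V(m)u_+^z(m).
\]
Equivalently, via the transmutation representation $u_+^z(n)=z^n(I+\sum_{j\ge1}B_+(n,j)z^j)$ with $\sum_j\|B_+(n,j)\|<\infty$, the Wiener-algebra bound under \eqref{firstfinitemoment} gives uniform convergence of this power series on $\overline{\mathbb{D}}$. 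Hence $z\mapsto u_+^z(n)$ is continuous on $\overline{\mathbb{D}}\setminus\{0\}$, and, by the same argument for $u_-$, so is $z\mapsto u_-^{1/z}(n)$ when the latter is parametrized by $z\in\overline{\mathbb{D}}$. Taking adjoints preserves continuity. I will take this construction (Sections \ref{EP}, \ref{sectionVE}) as given. Since the Wronskian $W(u_-^{1/\overline{z}},u_+^z)$ is the finite expression $W(\cdot,\cdot)(0)$, it is also continuous in $z$.

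The main obstacle is the third step: showing that the limiting Wronskian $W(u_-^{z^{\pm1}},u_+^{z^{\pm1}})$ is invertible for $z\in\mathbb{S}^1$, $E\in(-2,2)$, so that matrix inversion is continuous there. The approach is as follows. Suppose $W(u_-^{z},u_+^{z})c=0$ for some $c\in\mathbb{C}^L$. I use the adjoint-pairing identities of Section \ref{sectionWR}. First, constancy of the Wronskian, computed as $n\to+\infty$ with $u_+^{\bar z}=u_+^{1/z}$, gives $W(u_+^{z},u_+^{z})=0$ and $W(u_+^{1/z},u_+^{z})=i(z-z^{-1})I$, which is invertible since $E\neq\pm2$. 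This shows that the columns of $u_+^{z}$ and $u_+^{1/z}$ together span the $2L$-dimensional solution space. Writing $u_-^{z}=u_+^{1/z}A+u_+^{z}B$ and evaluating the relevant Wronskians at $\pm\infty$ yields a unitarity-type relation $A^*A-B^*B=I$. From this, $W(u_-^{z},u_+^{z})=A^*\,i(z-z^{-1})$ (up to sign conventions), and $A^*$ is invertible because $A^*A\ge I$. This gives the absence of zero-energy resonances in the interior of the continuous spectrum. Once invertibility holds, continuity of $M\mapsto M^{-1}$ and of products yields the stated limits for each fixed $r,s$, separately in the cases $s\ge r$ and $s<r$.
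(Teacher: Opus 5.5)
Your overall route is the paper's proof. You set $z_\epsilon=r(E\pm i\epsilon)$, use Lemma \ref{rpm} to get $z_\epsilon\to z^{\pm1}$, and pass to the limit in \eqref{eq:Greenintro} using continuity of $z\mapsto u_\pm^{z^{\pm1}}(n)$ and of the $n$-independent Wronskian, after inverting the limiting Wronskian. Your index bookkeeping ($1/\overline{z_\epsilon}\to z^{\pm1}$, $\overline{z_\epsilon}\to z^{\mp1}$) is correct. The only real difference is the invertibility step. The paper does not prove it; it reads it off from \eqref{MN}, $M_+^z=\nu^zW(u_-^z,u_+^z)$, together with Proposition 16 of \cite{MR4385984}. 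A self-contained flux-conservation argument is a good substitute, but yours interchanges the two model Wronskians, and the identities you display are false.

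With the paper's Wronskian, which is conjugate-linear in the first slot ($W(uM,v)=M^*W(u,v)$), Proposition \ref{y5} gives, for $z\in\mathbb{S}^1\setminus\{\pm1\}$, $W(u_+^{1/z},u_+^{z})=W(u_+^{\overline{z}},u_+^{z})=0$ and $W(u_+^{z},u_+^{z})=i(z^{-1}-z)I$. This is the opposite of what you wrote; you computed as if $u_+^z(n)^*\approx z^{n}$ rather than $\overline{z}^{\,n}=z^{-n}$. Your claims already fail for $V=0$: then $u_-^z=u_+^z$, so $A=0$ and $B=I$, and your formula gives $W(u_-^z,u_+^z)=0$ and $A^*A-B^*B=-I$. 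The correct computation from $u_-^z=u_+^{1/z}A+u_+^zB$ is
\[
W(u_-^z,u_+^z)=i(z^{-1}-z)B^*,\qquad W(u_-^z,u_-^z)=i(z^{-1}-z)\,(B^*B-A^*A).
\]
Evaluating $W(u_-^z,u_-^z)$ as $n\to-\infty$ gives $i(z^{-1}-z)I$, so $B^*B=I+A^*A\ge I$. Hence $B$, and therefore $W(u_-^z,u_+^z)$, is invertible. In fact $M_+^z=B^*$ by \eqref{MN}, so this is exactly the fact the paper imports. The spanning of the solution space by the columns of $u_+^{z},u_+^{1/z}$ then follows from \eqref{PhiW}, since the diagonal Wronskians are the invertible ones. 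The Wronskian $W(u_+^{z^{\mp1}},u_-^{z^{\mp1}})$ needed for $s<r$ is the adjoint of one of these by \eqref{symw}. Your two errors cancel, so the conclusion survives, but the step must be redone as above.

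A minor point: the Wiener-algebra bound of Proposition \ref{ub} requires $\|V\|_1<\infty$, not just \eqref{firstfinitemoment}. Under the weaker assumption, use Lemma \ref{voltinK}, which suffices because $z\neq\pm1$.
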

In the scalar case, a formula analogous to \eqref{eq:Greenintro} can be found in \cite{MR1711536}, Eq. (1.99). For our setting of matrix-valued difference equations, however, the proof is considerably more complicated due to the essential role of non-commutativity. The demonstration involves several technical ingredients, including an analysis of the properties of the transfer matrices; see Section \ref{EP}.

For every  $ E \in \mathbb{C} \setminus (\sigma_p(H)  \cup \{-2,2 \})   $, we denote by 
\begin{align}
   [ R^{\pm }_H(E) ]_{s,r}: = \begin{cases}
       [ R(E) ]_{s,r},  & \text{if   $ E \notin (-2,2) $}, \\ 
       [ R(E \pm i0)]_{r,s}  & \text{if   $ E \in (-2,2) $},    \end{cases}
\end{align}
and define the corresponding integral operator  $ R^{\pm }_H(E)$ by
\begin{align}
 (R^{\pm }_H(E) u)(s) : = &   \sum_{r}  [ R^{\pm }_H(E) ]_{s,r} u(r),   \hspace{1cm}   
  u \in D(R^{\pm }_H(E)) , \\ \notag   
\hspace{1cm}   D(R^{\pm }_H(E)) :  = & \Big \{  u \in \mathcal{M}_{L}^{\mathbb{Z}}  :   \sum_{r}  \| R^{\pm }_H(E) ]_{s,r} u(r) \|_{\mathcal{M}_L} < \infty  , \forall s \in  \mathbb{Z}  \:   \Big \}. 
\end{align}
Remark \ref{RKsr} implies that $ D(R^{\pm }_H(E)) $ includes $  \ell^2(\mathbb{Z}, \mathcal{M}_L) $ for $  E \notin (-2, 2) $, and it extends $R_H(E)$,  and Proposition \ref{LAP2}
 implies that it includes   $\ell^1(\mathbb{Z}, \mathcal{M}_L) $  for $  E \in (-2, 2) .$



Given $\alpha > 0$, we define the operator $T_{-\alpha} : \ell^{2}(\mathbb{Z}, \mathcal{M}_L) \longrightarrow \ell^{2}(\mathbb{Z}, \mathcal{M}_L)$ by
\begin{align}\label{Tsigma}
(T_{-\alpha} u)(n) = (1+|n|)^{-\alpha} u(n), \quad u = (u(n))_{n \in \mathbb{Z}} \in \ell^{2}(\mathbb{Z}, \mathcal{M}_L).
\end{align}

In Section \ref{sectionLAP} we prove that $ T_{-\alpha} R_{H}(E) T_{-\alpha} $ defines a bounded operator on $\ell^{2}(\mathbb{Z}, \mathcal{M}_L)$; see Theorem \ref{teoaco}. The following theorem states the Limiting Absorption Principle along with H\"older continuity (the proof is given in Section \ref{sectionLAP}, Theorem \ref{LAPmain}). We formulate our result in terms of $ R_H^{\pm} $, rather than only the boundary values as is customary, because this presentation is more informative and elegant.




\begin{teo}[Limiting Absorption Principle]\label{LAPmain2} 
Suppose that $ \rho > 0 $,  $\|  V \|
  _{\rho} < \infty ,$ $\rho + 1/2<\alpha $.    For every compact set $  K^{\pm}  \subset \overline{ \mathbb{C}^{\pm} } \setminus (\sigma_p(H) \cup \{ -2, 2\} )  $, the function 
\begin{align}
   K^{\pm} \ni E \to   T_{-\alpha }R^{\pm}_{H}(E)T_{-\alpha }  \in B(\ell ^{2}(\mathbb{Z}, \mathcal{M}_L))    \end{align}
is H\"older continuous  with H\"older exponent $ \min(\rho, 1)  .$

\end{teo}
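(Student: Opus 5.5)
The plan is to work entirely with the explicit kernel of Theorems \ref{GreenIntro} and \ref{Greenlimitintro}, reparametrized by $z=r_+(E)$ on the closed unit disk (or its reflection for $R^-$). We show that the weighted kernel $(1+|s|)^{-\alpha}[R^\pm_H(E)]_{s,r}(1+|r|)^{-\alpha}$ is Hölder in $E$ with a Hilbert--Schmidt-summable bound. Then we conclude via $\|T_{-\alpha}(R^\pm(E)-R^\pm(E'))T_{-\alpha}\|\le\|\cdot\|_{HS}$. Away from $\pm2$ the map $E\mapsto z$ is smooth (Lipschitz) on $K^\pm$, including across the cut from the appropriate side, so it suffices to prove Hölder continuity in $z$ on a compact set $Z\subset\overline{\mathbb D}\setminus\{0,\pm1\}$ avoiding the preimages of $\sigma_p(H)$.

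\emph{Step 1 (Jost solutions).} Write $u_+^z(n)=z^n m_+(z,n)$ via the Volterra equation
\[
m_+(z,n)=I+\sum_{k>n}\frac{z^{2(k-n)}-1}{z-z^{-1}}\,zV(k)m_+(z,k),
\]
and similarly for $u_-^{1/z}$. The kernel $(z^{2j}-1)/(z^2-1)$ is bounded by $C\min(j,|z^2-1|^{-1})$. Its $z$-difference is bounded by $C|z-z'|^{\theta}j^{1+\theta}$ for $\theta\in[0,1]$, uniformly away from $z=\pm1$. Iterating the Volterra series gives the following bounds:
\begin{itemize}
\item $\sup_{z\in Z}|m_\pm(z,n)|\le C$ for $\pm n\ge -N$, with at most linear growth $C(1+|n|)$ on the other half-line (by propagating through the transfer matrices);
\item $|m_\pm(z,n)-m_\pm(z',n)|\le C|z-z'|^{\theta}(1+|n|)^{\theta}$, with $\theta=\min(\rho,1)$, using $\|V\|_\rho<\infty$ to absorb the factor $k^{\theta}$.
\end{itemize}
This is where the earlier sections on Volterra equations and Hölder regularity of Jost solutions would be invoked.

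\emph{Step 2 (Wronskian).} The Wronskian $W(z)=W(u_-^{1/\bar z},u_+^z)$ (or its boundary analogue) is built from values at a fixed site $n=0$, so it inherits Hölder continuity with exponent $\theta$. We need $W(z)^{-1}$ to be uniformly bounded on $Z$. Inside the disk, invertibility follows from $E\notin\sigma(H)$: a kernel vector would produce an $\ell^2$ eigenvector. On the circle minus $\pm1$, we use the standard identity relating $W^*W$ to the scattering data (unitarity of the scattering matrix) to show that $W(z)$ is invertible; this is the matrix analogue of "no embedded eigenvalues / no resonances inside $(-2,2)$". Compactness of $Z$ and continuity then give a uniform bound on $W^{-1}$, and $W^{-1}$ is Hölder by $A^{-1}-B^{-1}=A^{-1}(B-A)B^{-1}$.

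\emph{Step 3 (kernel estimate).} For $s\ge r$ write the kernel as $z^{s-r}m_+(z,s)W^{-1}m_-(\bar z^{-1},r)^*$, using $|z|\le1$ and $s\ge r$ so that $|z^{s-r}|\le1$. We have $|z^{s-r}-z'^{s-r}|\le C|z-z'|^\theta|s-r|^\theta$, which is the crucial point on the unit circle. Combining this with Step 1 gives
\[
\big|[R(z)]_{s,r}-[R(z')]_{s,r}\big|\le C|z-z'|^\theta(1+|s|)^{1+\theta}(1+|r|)^{1+\theta}.
\]
The linear growth factors only appear when $s,r$ lie on the "wrong" half-line, so this is crude but sufficient for a plan. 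Multiplying by the weights, the Hilbert--Schmidt norm is finite provided $\sum_s(1+|s|)^{2(1+\theta)-2\alpha}<\infty$, i.e.\ $\alpha>\theta+3/2$. This is too lossy compared with the hypothesis $\alpha>\rho+1/2$.

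\emph{Main obstacle: the weights.} The main difficulty is to sharpen Step 1 so that the growth is only $(1+|n|)^{\theta}$ in the difference and $O(1)$ in the size. For example, for $s\ge r\ge0$ one expresses $u_-^{1/\bar z}$ through $u_+$ and $u_+^{\bar z}$-type solutions via reflection/transmission coefficients, which are bounded on the circle away from $\pm1$. This gives uniformly bounded Jost solutions on all of $\mathbb Z$ for $z$ on the circle, and inside $\overline{\mathbb D}$ the extra decay $|z|^{|n|}$ helps. With this, the weighted difference is bounded by $C|z-z'|^\theta(1+|s|)^{\theta-\alpha}(1+|r|)^{\theta-\alpha}$, which is Hilbert--Schmidt exactly when $\alpha>\theta+1/2$. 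This holds under the stated hypothesis since $\theta\le\rho$.
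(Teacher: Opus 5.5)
\paragraph{Verdict.} Your architecture is the paper's: the explicit Wronskian kernel, Volterra bounds for $m_\pm(z,n)=z^{\mp n}u_\pm^{z^{\pm1}}(n)$, a H\"older Wronskian with uniformly bounded inverse, and a weighted Schur/Hilbert--Schmidt summation. Your final Hilbert--Schmidt step and your reduction from $E$ to $z=r_\pm(E)$ are fine. However, the proposal has a genuine gap exactly at the step you call the main obstacle, and the remedy you sketch does not close it.

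\paragraph{Where the argument breaks.}
\begin{itemize}
\item Step 1 as written controls $m_\pm$ uniformly on only one half-line. On the other half-line it appeals to an unproved ``linear growth from transfer matrices''.
\item The difference bound is stated with $j^{1+\theta}$. Iterating it would require $\sum_k(1+|k|)^{1+\theta}\|V(k)\|<\infty$, which is not implied by $\|V\|_\rho<\infty$ for small $\rho>0$.
\item Your fix rewrites $u_-$ on $r\ge 0$ through $u_+^{z}$, $u_+^{z^{-1}}$ and transmission/reflection coefficients. This only makes sense for $z\in\mathbb{S}^1$.
\item $K^\pm$ contains points with $|z|<1$ arbitrarily close to the circle. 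H\"older continuity up to the boundary is precisely the statement that the bounds are uniform there.
\item ``The extra decay $|z|^{|n|}$ helps'' does not supply this uniformity. The only decaying factor in the kernel is $z^{s-r}$, which gives nothing on the diagonal $s=r\to\pm\infty$, and its rate degenerates as $|z|\to 1$.
\item The interior analogue of your decomposition would go through the divergent solutions $v_+^{z^{-1}}$ of Proposition \ref{base}. That would need uniform control of $v_+$ and of the coefficients up to $\mathbb{S}^1$, which you do not provide.
\item The same objection applies to the $(1+|n|)^\theta$ H\"older bound on the wrong half-line.
\end{itemize}

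\paragraph{The missing observation.} It is elementary, and you almost wrote it down. On a compact $Z\subset\overline{\mathbb{D}}\setminus\{0,\pm1\}$ the Volterra kernel satisfies
\[
|z^jS^z(j)|=\frac{|z^{2j}-1|}{|z-z^{-1}|}\le \frac{2}{|z-z^{-1}|}\le C(Z)\quad\text{for all } j\ge 0 .
\]
This is the second branch of your $\min(j,|z^2-1|^{-1})$. It is uniform in $j$ and valid for $|z|<1$ as well as $|z|=1$.
\begin{itemize}
\item Insert it into $m_+(z,s)=I-\sum_{k>s}z^{k-s}S^z(k-s)V(k)m_+(z,k)$ and apply Gronwall. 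Since $\sum_{k>s}\|V(k)\|\le\|V\|_0$ for every $s\in\mathbb{Z}$, this gives $\sup_{s\in\mathbb{Z},\,z\in Z}\|m_+(z,s)\|\le e^{C(Z)\|V\|_0}$ on both half-lines at once. This is Lemma \ref{LAP1}.
\item Remark \ref{IQ} bounds the kernel difference by $C|z-z'|^\theta(1+j)^\theta$, not $j^{1+\theta}$.
\item For $k\ge s$ one has $1+(k-s)\le(1+|s|)(1+|k|)$. Hence a weighted Gronwall argument yields $\sup_s(1+|s|)^{-\theta}\|m_+(z,s)-m_+(z',s)\|\le C|z-z'|^\theta$, using only $\|V\|_\theta$. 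This is Lemma \ref{y10}.
\end{itemize}
With these two global bounds your Step 3 immediately gives $\|[R^\pm_H(E)]_{s,r}-[R^\pm_H(E')]_{s,r}\|\le C|z-z'|^\theta(1+|s|)^\theta(1+|r|)^\theta$. The Hilbert--Schmidt conclusion then follows for $\alpha>\theta+1/2$, with no scattering coefficients needed.
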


  In Theorem \ref{LAPmain}, the only constraint on $\alpha$ is $\alpha > 1/2$. This is because if $\| V \|_{\rho} < \infty$ and $\alpha > 1/2$, we can always find a $\tilde{\rho} \in (0, \rho]$ such that $\tilde{\rho} + 1/2 < \alpha$, which implies $\| V \|_{ \tilde{\rho}} < \infty$. In this case, the H\"older exponent depends on $\tilde{\rho}$ rather than $\rho$.

Theorem \ref{LAPmain} improves upon the result in \cite{MR2598115}, which deals with higher dimensions and the continuous case. The fact that we can express everything explicitly in terms of Jost solutions---which is feasible in one dimension---allows us to obtain the H\"older exponent $ \min(\rho, 1) $. Assuming $\rho \leq 1$, our H\"older exponent is $\rho$, whereas the best exponent in \cite{MR2598115} is essentially $\rho/2$ (note that our notation differs from that in \cite{MR2598115}).

\color{black}

 In Chapter \ref{dispersiveesimates}, we prove the following theorem for the Schr\"odinger operator:

\begin{teo}[Dispersive Estimates] \label{teo:estimacion}
Suppose that \( \| V\|_1 < \infty \) (see \eqref{normV1}) and that \(V\) is generic in the sense of Definition \ref{generic}. Let \(P_{ac}\) denote the projection in \(\ell^{2}( \mathbb{Z}, \mathcal{M}_L )\) onto the absolutely continuous subspace of \(H\). Then, \(e^{-itH}P_{ac}\) defines a bounded operator from \( \ell^1( \mathbb{Z}, \mathcal{M}_L ) \) to \(\ell^\infty( \mathbb{Z}, \mathcal{M}_L )\), and it satisfies the estimate
\begin{equation} 
\label{eq:EST}
    \|e^{-itH}P_{ac}\|_{\ell^{1} \to \ell^{\infty}} \leq C \left( \frac{1}{1 + |t|} \right)^{\frac{1}{3}}, \quad t \geq 0,
\end{equation}
where \(C\) is a constant.
\end{teo}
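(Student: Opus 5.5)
The plan is to use Stone's formula together with the boundary values from Theorem \ref{Greenlimitintro}. Since $\sigma_p(H)$ is finite and lies outside $[-2,2]$, and since (by genericity) there are no resonances at $\pm2$, the absolutely continuous part is
\begin{align*}
[e^{-itH}P_{ac}]_{s,r}=\frac{1}{2\pi i}\int_{-2}^{2} e^{-itE}\big([R_H(E+i0)]_{s,r}-[R_H(E-i0)]_{s,r}\big)\,dE .
\end{align*}
We then change variables $E=2\cos k$, $z=e^{-ik}$ with $k\in[0,\pi]$. This turns the difference of boundary values into a single integral over $k\in[-\pi,\pi]$ of $e^{-2it\cos k}$ times an amplitude built from the Jost solutions and the inverse Wronskian, times the Jacobian $\sin k$.

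Consider first $s\ge r$. We factor the Jost solutions as $u_+^{z}(s)=z^{s}m_+(s,z)$ and $u_-^{1/z}(r)=z^{-r}m_-(r,z)$. The kernel then becomes
\begin{align*}
\int_{-\pi}^{\pi} e^{-i(2t\cos k + (s-r)k)}\, a_{s,r}(k)\,dk,
\qquad a_{s,r}(k)=m_+(s,e^{-ik})\,\sin k\,\big(W(\cdot,\cdot)(e^{-ik})\big)^{-1}\,m_-(r,e^{-ik})^{*}.
\end{align*}
The case $s<r$ is symmetric.

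The core tool is a van der Corput type lemma: for a phase $\phi(k)=2t\cos k+vk$, either $\phi''$ or $\phi'''$ is bounded below by $c\,t$ uniformly in $v\in\mathbb R$. This gives
\begin{align*}
\Big|\int e^{-i\phi(k)}a(k)\,dk\Big|\le C\,t^{-1/3}\,\|a\|_{\mathcal{A}},
\end{align*}
where $\mathcal A$ is the Wiener algebra of absolutely convergent Fourier series. The estimate follows by writing $a(k)=\sum_j \hat a_j e^{ijk}$ and noting that each term only shifts $v$. Boundedness of the free factor $\int e^{-i\phi}$ by $Ct^{-1/3}$ is classical (Airy asymptotics of Bessel functions $J_n(2t)$). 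Therefore it suffices to prove $\sup_{s\ge r}\|a_{s,r}\|_{\mathcal A}<\infty$.

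The main obstacle is this uniform Wiener-algebra bound, and I would split it into three steps.

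\emph{Step 1.} Using the Volterra equations and the transmutation representation $m_+(n,z)=I+\sum_{j\ge1}B_+(n,j)z^{j}$, with $\sum_j\|B_+(n,j)\|\le C\|V\|_1$ uniformly for $n\ge0$ (this needs $\|V\|_1<\infty$), we bound $\|m_\pm(n,\cdot)\|_{\mathcal A}$ uniformly for $\pm n\ge0$. For $n$ of the wrong sign we cannot bound $m_+$ directly. Instead we rewrite $u_+$ via the scattering relation $u_+^z=u_-^{z}T^{-1}\dots$ in terms of $u_-^{1/z}$ and $u_-^{z}$, which produces reflection/transmission coefficients. This requires a case split $0\le r\le s$, $r\le s\le0$, $r\le0\le s$.

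\emph{Step 2.} We must show $\sin k\,W(k)^{-1}$ lies in $\mathcal A$. Away from $k=0,\pi$, $W(k)$ is invertible (no embedded eigenvalues). Genericity (Definition \ref{generic}) ensures $W(\pm1)$ is invertible, so $W^{-1}$ is even bounded there. Wiener's lemma (its matrix version, using $\det W\ne0$ on the circle and $W\in\mathcal A$) then gives $W^{-1}\in\mathcal A$. Non-commutativity is harmless because $\mathcal A$ is a Banach algebra and the matrix Wiener lemma applies via the determinant and adjugate.

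\emph{Step 3.} We combine the pieces by the algebra property, and then sum over $\ell^1$ inputs to get the operator bound. For small $t$ we use the trivial bound $\|a\|_{L^1}$, which gives the $(1+|t|)^{-1/3}$ form.
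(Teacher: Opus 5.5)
Your proposal is correct and follows the paper's proof essentially step for step. The shared steps are: Stone's formula with the Wronskian kernel; the substitution $E=2\cos k$, which produces the amplitude $f_{s,r}$ (your $a_{s,r}$, where $\sin k\,W^{-1}$ is proportional to $T_+^{z}$); the matrix Wiener lemma under genericity; the three-case split via the scattering relations, where for $0\le r\le s$ one uses $(T_+^{\overline{z}})^*=T_-^{z}$ to rewrite $T_+^z\tilde u_-^z(r)^*$; and the Wiener-algebra van der Corput bound with $l=2,3$. Your explicit small-$t$ bound via $\|a_{s,r}\|_{L^1}\le 2\pi\|a_{s,r}\|_{\mathcal{A}}$ is a small improvement, since the paper's argument only treats $t>1$ and leaves the $(1+|t|)^{-1/3}$ form for bounded $t$ implicit.
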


\color{black}

\section{The generalized eigenvalue problem }\label{EP}
In this section we seek for solutions to the generalized eigenvalue problem \eqref{eigen},
$
 \tau u = E u.
$
 
\subsection{Cauchy problem} \label{sectionCP}

In this section, we study the Cauchy problem for the eigenvalue equation \eqref{eigen}. That is, we seek solutions to the eigenvalue problem satisfying the initial conditions
\begin{align} \label{initial}
    u(n_0) = A_0, \quad u(n_0 + 1) = B_0.
\end{align}
We follow the approach of \cite{MR4192212}, Section 2. For the reader's convenience, we recall below the relevant notation and sone results (without proofs) from that source, beginning with the following definition from the start of Section 2 in \cite{MR4192212}.
\begin{defi}\label{transmatrix}
    For each $n \in \mathbb{Z}$ and $V \in \mathcal{M}_L^{\mathbb{Z}}$, we define the transfer matrix:
\begin{align}
    \mathcal{T}^{E}(n) = \begin{pmatrix}
        E - V(n) & -I \\
        I & 0
    \end{pmatrix}.
\end{align}
For $n > m$, we define:
\begin{align}
    \mathcal{T}^{E}(n, m) := \mathcal{T}^{E}(n) \mathcal{T}^{E}(n-1) \cdots \mathcal{T}^{E}(m+1),
\end{align}
and for $n < m$:
\begin{align}\label{TI}
    \mathcal{T}^{E}(m, n) := \mathcal{T}^{E}(n, m)^{-1},
\end{align}
with the convention that $\mathcal{T}^{E}(n, n)$ is the identity matrix in $\mathcal{M}_{2L}$.
\end{defi}

These matrices yield the following propagation relations:
For any $n, m \in \mathbb{Z}$ and any solution $u$ of $\tau u =Eu $, we have:
    \begin{align}\label{transfer}
        \begin{pmatrix} u(n+1) \\ u(n) \end{pmatrix} &=
        \mathcal{T}^{E}(n) \begin{pmatrix} u(n) \\ u(n-1) \end{pmatrix}, \hspace{1cm}
        \begin{pmatrix} u(n+1) \\ u(n) \end{pmatrix} &=
        \mathcal{T}^{E}(n, m) \begin{pmatrix} u(m+1) \\ u(m) \end{pmatrix}.
    \end{align}

Defining the state vector:
\begin{align}
    \Phi(n) := \begin{pmatrix} u(n+1) \\ u(n) \end{pmatrix},
\end{align}
equation \eqref{transfer} allows us to rewrite the eigenvalue equation \eqref{eigen}, as:
\begin{align}\label{rw}
    \Phi(n) = \mathcal{T}^{E}(n) \Phi(n-1) = \mathcal{T}^{E}(n, m) \Phi(m).
\end{align}

Consider the  matrix $\mathcal{J} \in \mathcal{M}_{2L}$:
\begin{align}
    \mathcal{J} = \begin{pmatrix}
        0 & -I \\
        I & 0
    \end{pmatrix},
\end{align}
where $0$ denotes the zero matrix in $\mathcal{M}_L$. Note that:
\begin{align}
    \mathcal{J}^{-1} = \mathcal{J}^* = \begin{pmatrix}
        0 & I \\
        -I & 0
    \end{pmatrix}.
\end{align}
For any block matrix $
    \mathcal{S} = \begin{pmatrix}
        A_{11} & A_{12} \\
        A_{21} & A_{22} 
    \end{pmatrix}$ ,
we have the conjugation rule:
\begin{align}\label{irule}
    \mathcal{J} \mathcal{S} \mathcal{J}^* = \begin{pmatrix}
        A_{22} & -A_{21} \\
        -A_{12} & A_{11}
    \end{pmatrix}.
\end{align}
\begin{propo}\label{Uequivalence}
    For $E\in \mathbb{C}$ and $n\in \mathbb{Z}$ it holds that 
    \begin{align}
        \mathcal{T}^{E}(n)^{-1}=\mathcal{J}\mathcal{T}^{\overline{E}}(n)^{*}\mathcal{J}^{-1}, \hspace{1cm}
        \mathcal{T}^{E}(n,m)^{-1}=\mathcal{J}\mathcal{T}^{\overline{E}}(n,m)^{*}\mathcal{J}^{-1}.
    \end{align}
\end{propo}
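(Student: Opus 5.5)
The plan is to first establish the identity for a single transfer matrix by direct block computation, and then obtain the identity for products from it.

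\textbf{Single factor.} Since $V(n)$ is self-adjoint,
\begin{align*}
\mathcal{T}^{\overline{E}}(n)^{*}=\begin{pmatrix} E - V(n) & I \\ -I & 0\end{pmatrix}.
\end{align*}
Applying the conjugation rule \eqref{irule} to this block matrix, with $A_{11}=E-V(n)$, $A_{12}=I$, $A_{21}=-I$, $A_{22}=0$, gives
\begin{align*}
\mathcal{J}\mathcal{T}^{\overline{E}}(n)^{*}\mathcal{J}^{-1}=\begin{pmatrix} 0 & I \\ -I & E-V(n)\end{pmatrix},
\end{align*}
where we use $\mathcal{J}^{-1}=\mathcal{J}^*$. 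It then remains to check that this matrix is a two-sided inverse of $\mathcal{T}^E(n)$. Multiplying the block matrices, the off-diagonal terms $(E-V(n))\cdot I - I\cdot(E-V(n))$ cancel, and both products equal the identity of $\mathcal{M}_{2L}$. Since these are finite-dimensional square matrices, a one-sided check would already suffice.

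\textbf{Products, case $n>m$.} Here $\mathcal{T}^E(n,m)=\mathcal{T}^E(n)\cdots\mathcal{T}^E(m+1)$, so
\begin{align*}
\mathcal{T}^E(n,m)^{-1}=\mathcal{T}^E(m+1)^{-1}\cdots\mathcal{T}^E(n)^{-1}.
\end{align*}
Substitute the single-factor identity into each factor. The interior factors $\mathcal{J}^{-1}\mathcal{J}$ telescope, leaving
\begin{align*}
\mathcal{J}\,\mathcal{T}^{\overline E}(m+1)^*\cdots\mathcal{T}^{\overline E}(n)^*\,\mathcal{J}^{-1}
=\mathcal{J}\big(\mathcal{T}^{\overline E}(n)\cdots\mathcal{T}^{\overline E}(m+1)\big)^*\mathcal{J}^{-1}
=\mathcal{J}\,\mathcal{T}^{\overline E}(n,m)^*\mathcal{J}^{-1}.
\end{align*}

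\textbf{Remaining cases.} For $n=m$ both sides are the identity. For $n<m$, the definition \eqref{TI} gives $\mathcal{T}^E(n,m)=\mathcal{T}^E(m,n)^{-1}$, and the case already proved yields
\begin{align*}
\mathcal{T}^E(m,n)^{-1}=\mathcal{J}\mathcal{T}^{\overline E}(m,n)^*\mathcal{J}^{-1}.
\end{align*}
Inverting both sides, and using that inversion commutes with taking adjoints, we get
\begin{align*}
\mathcal{T}^E(n,m)^{-1}=\mathcal{J}\big(\mathcal{T}^{\overline E}(m,n)^{-1}\big)^*\mathcal{J}^{-1}=\mathcal{J}\,\mathcal{T}^{\overline E}(n,m)^*\mathcal{J}^{-1}.
\end{align*}

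There is no real obstacle in this argument. The only points that need care are the order reversal under the adjoint of a product, and the fact that the self-adjointness of $V(n)$ together with the conjugate $\overline{E}$ is exactly what makes the diagonal block of $\mathcal{T}^{\overline{E}}(n)^*$ equal to $E-V(n)$.
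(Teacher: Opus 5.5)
Your proof is correct and follows the paper's argument: compute $\mathcal{T}^{\overline{E}}(n)^*$ using self-adjointness of $V(n)$, apply the conjugation rule \eqref{irule}, verify the inverse by block multiplication, then telescope the $\mathcal{J}^{-1}\mathcal{J}$ factors for $n>m$ and use \eqref{TI} for $n<m$. Your write-up is slightly more careful than the paper's. You spell out the inversion step for $n<m$, which the paper only cites. Your telescoped product also correctly runs from $\mathcal{T}^{\overline{E}}(m+1)^*$ to $\mathcal{T}^{\overline{E}}(n)^*$, whereas the paper's display contains a stray factor $\mathcal{T}^{\overline{E}}(m)^*$.
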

 \begin{proof} We prove the result for $n > m $, for $n < m$ use \eqref{TI}.  From Definition \ref{transmatrix} and using that $V(n)$ is self-adjoint, for every $n$, we have
    $
   \mathcal{T}^{\overline{E}}(n)^{*}= \begin{pmatrix}
 E-V(n) & I\\
 -I & 0
\end{pmatrix}
$  
and using \eqref{irule} 
 \begin{align}
  \mathcal{J} \mathcal{T}^{\overline{E}}(n)^{*}\mathcal{J}^{*}= \begin{pmatrix}
 0 & I\\
 -I &  E-V(n)
\end{pmatrix}.
\end{align}  
Finally 
\begin{align}
  \mathcal{T}^{E}(n)(\mathcal{J} \mathcal{T}^{\overline{E}}(n)^{*}\mathcal{J}^{*})= 
  \begin{pmatrix}
 E-V(n) & -I\\
 I & 0
\end{pmatrix}
  \begin{pmatrix}
 0 & I\\
 -I &  E-V(n)
\end{pmatrix}=
\begin{pmatrix}
 I & 0\\
 0 &  I
\end{pmatrix}
\end{align}  

Now, for the second part, note that 
\begin{align}
     \mathcal{J} \mathcal{T}^{\overline{E}}(n,m)^{*}\mathcal{J}^{*}&=\notag
     \mathcal{J} \mathcal{T}^{\overline{E}}(m+1)^{*}\mathcal{J}^{*} \mathcal{J} \mathcal{T}^{\overline{E}}(m)^{*}\mathcal{J}^{*}\cdot \cdot \cdot \mathcal{J}  \mathcal{T}^{\overline{E}}(n)^{*}\mathcal{J}^{*}\\&\notag
     =\mathcal{T}^{E}(m+1)^{-1} \mathcal{T}^{E}(m)^{-1}\cdot \cdot \cdot \mathcal{T}^{E}(n)^{-1}\\& \notag =\mathcal{T}^{E}(n,m)^{-1}.
\end{align}
 \end{proof}

\begin{teo}
   Let \(u, w, \hat{u}, \hat{w} \in \mathcal{M}_L^{\mathbb{Z}}\) be matrix sequences such that \(u\) and \(w\) are solutions of \eqref{eigen} with parameter \(E\), while \(\hat{u}\) and \(\hat{w}\) are solutions of \eqref{eigen} with parameter \(\overline{E}\). Then, for all \(n, m \in \mathbb{Z}\), the following identity holds:
    \begin{align}\label{JPhi}
        \mathcal{J}\Phi(\hat{u}, \hat{w})(n)^{*}\mathcal{J}^{*}\Phi(u, w)(n) = \mathcal{J}\Phi(\hat{u}, \hat{w})(m)^{*}\mathcal{J}^{*}\Phi(u, w)(m)
    \end{align}
where
\begin{align}\label{Phi}
    \Phi(u, v)(n) = \begin{pmatrix}
        u(n+1) & v(n+1) \\
        u(n) & v(n)
    \end{pmatrix}.
\end{align}
\end{teo}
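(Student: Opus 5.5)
The plan is to use the transfer-matrix propagation \eqref{rw} together with Proposition \ref{Uequivalence}. Since $u$ and $w$ solve $\tau u=Eu$, each column of the block matrix $\Phi(u,w)$ propagates by the same transfer matrix, so
\begin{align*}
\Phi(u,w)(n)=\mathcal{T}^{E}(n,m)\,\Phi(u,w)(m).
\end{align*}
This holds for all $n,m\in\mathbb{Z}$. For $n<m$ it follows from the convention \eqref{TI}, and for $n=m$ it is trivial. In the same way, since $\hat u$ and $\hat w$ solve the equation with parameter $\overline{E}$, we get $\Phi(\hat u,\hat w)(n)=\mathcal{T}^{\overline{E}}(n,m)\Phi(\hat u,\hat w)(m)$.

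Next I would take adjoints, which gives $\Phi(\hat u,\hat w)(n)^{*}=\Phi(\hat u,\hat w)(m)^{*}\,\mathcal{T}^{\overline{E}}(n,m)^{*}$. Substituting this into the left-hand side of \eqref{JPhi} and inserting $\mathcal{J}^*\mathcal{J}=I$ turns it into
\begin{align*}
\mathcal{J}\Phi(\hat u,\hat w)(m)^{*}\mathcal{J}^{*}\,\big(\mathcal{J}\mathcal{T}^{\overline{E}}(n,m)^{*}\mathcal{J}^{*}\big)\,\mathcal{T}^{E}(n,m)\,\Phi(u,w)(m).
\end{align*}
By Proposition \ref{Uequivalence}, the middle factor satisfies $\mathcal{J}\mathcal{T}^{\overline{E}}(n,m)^{*}\mathcal{J}^{*}=\mathcal{T}^{E}(n,m)^{-1}$. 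It therefore cancels against $\mathcal{T}^{E}(n,m)$, which leaves exactly the right-hand side of \eqref{JPhi}.

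The only point that needs care is that Proposition \ref{Uequivalence} is used for all orderings of $n$ and $m$. It was proved for $n>m$ and extended to $n<m$ through \eqref{TI}. I would check the $n<m$ case explicitly: inverting $\mathcal{T}^{E}(m,n)^{-1}=\mathcal{J}\mathcal{T}^{\overline{E}}(m,n)^{*}\mathcal{J}^{-1}$ and using $\mathcal{T}^{\overline{E}}(m,n)^{*\,-1}=\mathcal{T}^{\overline{E}}(n,m)^{*}$ gives the needed identity. I would also confirm that the propagation relation \eqref{transfer} holds for $n<m$ with this inverse convention. With those two checks in place, the argument above is purely algebraic, and I expect no real obstacle beyond this bookkeeping.
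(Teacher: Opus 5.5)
Your proposal is correct and follows the paper's proof essentially line by line: propagate $\Phi(\hat u,\hat w)$ with $\mathcal{T}^{\overline{E}}(n,m)$, take adjoints, insert $\mathcal{J}^*\mathcal{J}=I$, and use Proposition~\ref{Uequivalence} to replace $\mathcal{J}\mathcal{T}^{\overline{E}}(n,m)^*\mathcal{J}^*$ by $\mathcal{T}^{E}(n,m)^{-1}$, which sends $\Phi(u,w)(n)$ back to $\Phi(u,w)(m)$. Your explicit check of the case $n<m$ is a useful addition, since the paper only gestures at it through the inverse convention for transfer matrices. That check consists of inverting the $n>m$ identity and using $(\mathcal{T}^{\overline{E}}(m,n)^{*})^{-1}=\mathcal{T}^{\overline{E}}(n,m)^{*}$.
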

\begin{proof}
    Note that for $u, w \in \mathcal{M}_{L}^{\mathbb{Z}}$ solutions of \eqref{eigen}, we have from \eqref{rw}:
    \begin{align}\label{eq:uw}  
        \mathcal{T}^{E}(n, m)\Phi(u, w)(m) = \Phi(u, w)(n).
    \end{align}
    Using \eqref{eq:uw} and Proposition \ref{Uequivalence}, we obtain:
    \begin{align}
        \mathcal{J}\Phi(\hat{u}, \hat{w})(n)^{*}\mathcal{J}^{*}\Phi(u, w)(n)&= \mathcal{J}(\mathcal{T}^{\overline{E}}(n, m)\Phi(\hat{u}, \hat{w})(m))^{*}\mathcal{J}^{*}\Phi(u, w)(n) \\
        &\quad \notag= \mathcal{J}(\Phi(\hat{u}, \hat{w})(m)^{*}\mathcal{T}^{\overline{E}}(n, m)^{*})\mathcal{J}^{*}\Phi(u, w)(n) \\
        &\quad \notag= \mathcal{J}\Phi(\hat{u}, \hat{w})(m)^{*}\mathcal{J}^{*}\mathcal{J}\mathcal{T}^{\overline{E}}(n, m)^{*}\mathcal{J}^{*}\Phi(u, w)(n) \\
        &\quad \notag= \mathcal{J}\Phi(\hat{u}, \hat{w})(m)^{*}\mathcal{J}^{*}\mathcal{T}^{E}(n, m)^{-1}\Phi(u, w)(n) \\
        &\quad \notag= \mathcal{J}\Phi(\hat{u}, \hat{w})(m)^{*}\mathcal{J}^{*}\Phi(u, w)(m) .
    \end{align}
\end{proof}
 \begin{teo}\label{cauhyproblem}
   Let \(A_{0}, B_{0} \in \mathcal{M}_{L}\) and \(n_{0} \in \mathbb{Z}\). There exists a unique solution \(u\) of the eigenvalue equation \eqref{eigen} satisfying \(u(n_{0}) = A_{0}\) and \(u(n_{0}+1) = B_{0}\).
 \end{teo}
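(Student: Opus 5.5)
The plan is to use the transfer matrices of Definition \ref{transmatrix} and the propagation relation \eqref{rw}, building the solution one step at a time in each direction from the initial data.

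\textbf{Existence, forward direction.} Set $u(n_0)=A_0$ and $u(n_0+1)=B_0$. For $n\ge n_0+1$, define recursively
\begin{align*}
u(n+1) := (E-V(n))u(n) - u(n-1).
\end{align*}
This is exactly the equation $(\tau u)(n)=Eu(n)$ solved for $u(n+1)$.

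\textbf{Existence, backward direction.} For $n\le n_0$, define
\begin{align*}
u(n-1) := (E-V(n))u(n) - u(n+1),
\end{align*}
which is the same equation solved for $u(n-1)$. In transfer-matrix language, we put $\Phi(n):=\mathcal{T}^{E}(n,n_0)\Phi(n_0)$ for all $n\in\mathbb{Z}$, where $\Phi(n_0)=(B_0,A_0)^T$. This uses the invertibility of each $\mathcal{T}^{E}(n)$; by Proposition \ref{Uequivalence}, or directly, its inverse is $\begin{pmatrix}0& I\\ -I & E-V(n)\end{pmatrix}$, so $\mathcal{T}^{E}(n,m)$ is well defined for $n<m$ through \eqref{TI}.

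\textbf{Consistency.} We must check that the two components of $\Phi(n)$ agree: the first component of $\Phi(n-1)$ should coincide with the second component of $\Phi(n)$. This follows from the block form of $\mathcal{T}^{E}(n)$, whose second row is $(I\;\;0)$. Once that is settled, we take $u(n)$ to be the second component of $\Phi(n)$. By construction this sequence satisfies $\tau u=Eu$ at every $n$, since each recursion step is precisely the eigenvalue equation at the corresponding site.

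\textbf{Uniqueness.} Let $u$ and $v$ be two solutions with the same initial data. Their difference $w=u-v$ solves $\tau w=Ew$ with $w(n_0)=w(n_0+1)=0$. Using \eqref{transfer}, we get $(w(n+1),w(n))^T=\mathcal{T}^{E}(n,n_0)\,0=0$ for every $n$, both forward and backward; equivalently, an induction on the recursion gives the same conclusion. Hence $w\equiv 0$.

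No step is a genuine obstacle. The only point needing care is the backward propagation, which depends on the invertibility of the transfer matrix; here that invertibility is explicit, and no invertibility of $V(n)$ or $E-V(n)$ is required.
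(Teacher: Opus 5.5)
Your proof is correct and follows the same route as the paper, which defines the solution by $\Phi(n)=\mathcal{T}^{E}(n,n_0)\,(B_0,A_0)^T$ through the propagation relation \eqref{rw}. You additionally spell out two steps the paper leaves implicit: that consecutive $\Phi(n)$ are consistent, so the sequence read off from them actually solves $\tau u=Eu$, and that uniqueness follows by linearity.
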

\begin{proof}
   For every $n\in \mathbb{Z}$, by equation \eqref{rw} we have:
 \begin{align}
    \begin{pmatrix}
        u(n+1)\\
        u(n)
    \end{pmatrix}=\mathcal{T}^{E}(n,n_0)
     \begin{pmatrix}
        B_0\\
        A_0
    \end{pmatrix}
\end{align}
determines the unique solution  $u(n)$ that satisfies   \eqref{eigen}.
\end{proof}

\color{black}

\color{cyan}

\color{black}

\subsection{Volterra equation and existence of Jost solutions}\label{sectionVE}

 The following theorem is established in \cite{MR4385984} (Theorem 33) and for the scalar case in \cite{MR1711536} (Lemma 7.8):
\begin{teo}\label{teo:volt}
We consider the Volterra equation
\begin{equation}
\label{eq:volt}
f(n) = g(n) + \sum_{m=n+1}^{\infty} \mathcal{K}(n,m)f(m),
\end{equation}
where \(g \in \ell^{\infty}(\mathbb{Z}^{+}, \mathcal{M}_{L})\) and \(\mathcal{K}(n,m) \in \mathcal{M}_L\) for all \(n, m \in \mathbb{Z}^{+}\). Suppose there exists a sequence \(M \in \ell^{1}(\mathbb{Z}^{+}, \mathbb{R})\) such that \(\|\mathcal{K}(n,m)\| \leq M(m)\) for all \(n, m \in \mathbb{Z}^{+}\). Then, Equation (\ref{eq:volt}) admits a unique solution \(f \in \ell^{\infty}(\mathbb{Z}^{+}, \mathcal{M}_{L})\) ($\mathbb{Z}^{+}$ denoted the set of positive integers). 
\end{teo}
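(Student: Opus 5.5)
We prove Theorem \ref{teo:volt} by the standard Picard iteration (Neumann series) for Volterra operators, using the triangular structure of the kernel to obtain factorial decay of the iterates.

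\textbf{Setup.} Set $\mu(n) := \sum_{m=n+1}^{\infty} M(m)$. This is finite, nonincreasing, tends to $0$, and satisfies $\mu(n) \le \|M\|_{\ell^1}$. Define the operator $\mathcal{K}$ on $\ell^\infty(\mathbb{Z}^+,\mathcal{M}_L)$ by
\[
(\mathcal{K}f)(n) = \sum_{m>n} \mathcal{K}(n,m) f(m).
\]
Since $\|(\mathcal{K}f)(n)\| \le \mu(n)\|f\|_\infty$, the series converges absolutely, and $\mathcal{K}$ is bounded with norm at most $\|M\|_1$. The difficulty is that this norm need not be smaller than $1$, so a plain contraction argument is unavailable.

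\textbf{Key estimate.} The central claim is
\[
\|(\mathcal{K}^j f)(n)\| \le \frac{\mu(n)^j}{j!}\,\|f\|_\infty ,
\]
proved by induction on $j$. For the inductive step,
\[
\|(\mathcal{K}^{j+1}f)(n)\| \le \sum_{m>n} M(m)\,\frac{\mu(m)^j}{j!}\,\|f\|_\infty .
\]
It then suffices to show $\sum_{m>n} M(m)\mu(m)^j \le \mu(n)^{j+1}/(j+1)$. This is a discrete "integral of $d\mu$" inequality. Since $M(m) = \mu(m-1)-\mu(m)$ and $\mu$ is nonincreasing, we have
\[
M(m)\,\mu(m)^j \le \int_{\mu(m)}^{\mu(m-1)} t^j\,dt .
\]
Summing over $m>n$ telescopes to at most $\int_0^{\mu(n)} t^j\,dt = \mu(n)^{j+1}/(j+1)$. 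This is the only step requiring care: it is where the ordering $m>n$ and the choice $\mu(m)$ rather than $\mu(m-1)$ matter.

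\textbf{Existence.} Define $f := \sum_{j\ge 0} \mathcal{K}^j g$. By the key estimate the series converges in $\ell^\infty$, with
\[
\|f\|_\infty \le e^{\|M\|_1}\|g\|_\infty .
\]
Because $\mathcal{K}$ is bounded, we may apply it termwise, giving $f = g + \mathcal{K}f$. Hence $f$ solves \eqref{eq:volt}.

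\textbf{Uniqueness.} Suppose $f_1, f_2 \in \ell^\infty$ both solve \eqref{eq:volt}, and set $h = f_1 - f_2$. Then $h = \mathcal{K}h$, so $h = \mathcal{K}^j h$ for every $j$. The key estimate gives
\[
\|h(n)\| \le \frac{\|M\|_1^j}{j!}\,\|h\|_\infty \longrightarrow 0 \quad (j\to\infty),
\]
so $h = 0$.

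An alternative route is to use the weighted norm $\|f\|_w = \sup_n e^{-c\mu(n)}\|f(n)\|$, under which $\mathcal{K}$ becomes a contraction for suitable $c$. The iteration argument above is simpler, and it is the one I would write out.
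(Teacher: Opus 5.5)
Your proof is correct. The induction $\|(\mathcal{K}^j f)(n)\|\le \mu(n)^j\|f\|_\infty/j!$, based on $M(m)\mu(m)^j\le\int_{\mu(m)}^{\mu(m-1)}t^j\,dt$ and telescoping, is sound, and existence and uniqueness follow exactly as you describe.

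The paper gives no proof of this statement; it imports it from Theorem 33 of \cite{MR4385984} (and Lemma 7.8 of \cite{MR1711536} for $L=1$). Your Neumann-series argument with the factorial bound is the standard self-contained proof of such discrete Volterra lemmas.

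Your proof also covers two things the paper needs later. It works verbatim with $\mathbb{Z}^+$ replaced by $\mathbb{Z}$, which is where the paper applies it. And because the series converges uniformly whenever $M$ does not depend on a parameter, it gives the continuity and holomorphy in $z$ that the paper extracts from this theorem in Lemmas \ref{jostvolterra} and \ref{voltinK}.
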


As a consequence of Theorem \ref{teo:volt}, we obtain the following result from \cite{MR4385984} (Lemma 7):

\begin{lema}\label{jostvolterra}
    If \(\|V\|_1 < \infty\), then the Jost solutions defined in Definition \ref{jost} exist for all \(z \in \overline{\mathbb{D}}\setminus \{0\}\). Moreover, for each fixed \(n \in \mathbb{Z}\), the maps \(z \mapsto u_{\pm}^{z^{\pm 1}}(n)\) are continuous on \(\overline{\mathbb{D}}\setminus \{0\}\), holomorphic on \(\mathbb{D}\setminus \{0\}\), and satisfy the Volterra equations:

    \begin{align}\label{jostsolvolterra}
        u_+^z(n) &= z^n I - \sum_{j=n+1}^\infty S^z(j-n)V(j)u_+^z(j), \quad n \in \mathbb{Z},\\ \notag
        u_-^{1/z}(n) &= z^{-n} I + \sum_{j=-\infty}^{n-1} S^{1/z}(j-n)V(j)u_-^{1/z}(j), \quad n \in \mathbb{Z}
    \end{align}
    where
    \begin{equation}
\label{eq:scalar}
S^{z}(n) = \left\{ \begin{array}{lcc}
            \dfrac{z^{n}-z^{-n}}{z-z^{-1}} = \dfrac{z}{z+1}\sum_{j=-n}^{n-1} z^{j}, & z^{2} \neq 1 \\
            (\pm 1)^{n+1}n, & z = \pm 1.
            \end{array}
   \right.
   \end{equation}
\end{lema}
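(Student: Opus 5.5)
The plan is to recast the asymptotic conditions as Volterra equations, solve them with Theorem \ref{teo:volt}, and then check that the solutions are genuine Jost solutions and depend regularly on $z$. The case of $u_+^z$ is treated in detail; $u_-^{1/z}$ follows by the reflection $n\mapsto -n$, applied with the reflected potential.

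\textbf{Step 1: the Volterra equation produces solutions of $\tau u=Eu$.} Fix $z\in\overline{\mathbb{D}}\setminus\{0\}$ and $E=J(z)$. The scalar kernel $S^z$ satisfies $S^z(k+1)+S^z(k-1)=ES^z(k)$, with $S^z(0)=0$ and $S^z(1)=1$; when $z^2=1$ this is checked directly from the formula $(\pm1)^{n+1}n$. Let $u$ solve \eqref{jostsolvolterra}. Apply $\tau_0$ to the sum. In the terms with $j\ge n+2$ the shifts recombine via the recursion for $S^z$. The boundary term at $j=n+1$ comes from $u(n-1)$ and equals $-S^z(1)V(n)u(n)$. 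The term involving $S^z(0)$ vanishes. Together these give $u(n+1)+u(n-1)=Eu(n)-V(n)u(n)$. Since $z^nI$ solves the free equation, $u$ solves $\tau u=Eu$. Alternatively, one can verify the equation for $n\ge N$ and extend to all $n$ using Theorem \ref{cauhyproblem}.

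\textbf{Step 2: existence and asymptotics via Theorem \ref{teo:volt}.} Set $f(n)=z^{-n}u(n)$. The equation becomes
\[
f(n)=I-\sum_{j>n}z^{j-n}S^z(j-n)V(j)f(j).
\]
For $|z|\le1$ and $k\ge1$, the identity $z^kS^z(k)=\frac{z}{z+1}\sum_{j=0}^{2k-1}z^{j}$ gives $|z^kS^z(k)|\le Ck$. The bound is uniform for $z$ away from $-1$; near $z=-1$ one uses $z^kS^z(k)=z\sum_{i=0}^{k-1}z^{2i}$, which is bounded by $k$. So $\|\mathcal{K}(n,j)\|\le C(j-n)\|V(j)\|\le C(1+|n|)(1+|j|)\|V(j)\|$.

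For $n\ge n_0$ this is dominated by $M(j)=C(n_0)(1+|j|)\|V(j)\|$, which lies in $\ell^1$ because $\|V\|_1<\infty$. Theorem \ref{teo:volt}, applied on $[n_0,\infty)$ after reindexing to $\mathbb{Z}^+$, yields a unique bounded $f$ there. We then extend downward by finite backward recursion; this is well defined because the sum only involves $j>n$. Dominated convergence gives $\sum_{j>n}\to0$ as $n\to\infty$, so $f(n)=I+o(1)$. This is exactly the condition \eqref{eq:as}.

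\textbf{Step 3: continuity and holomorphy in $z$.} Holomorphy in $\mathbb{D}\setminus\{0\}$ and continuity on $\overline{\mathbb{D}}\setminus\{0\}$ come from the Neumann (iterated) series for $f$. Its $m$-th term is bounded by $\frac{1}{m!}\bigl(\sum_{j>n}M(j)\bigr)^m$ uniformly in $z$. Each term is a finite-dimensional sum of continuous functions in $z$ that are holomorphic inside the disk. Hence the series converges uniformly, and both properties pass to the limit by Weierstrass's theorem.

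\textbf{Main obstacle.} The delicate point is making the kernel bound $|z^kS^z(k)|\le Ck$ uniform up to the boundary, including near $z=\pm1$, where the representation of $S^z$ switches. I would first try to obtain a single polynomial representation valid on all of $\overline{\mathbb{D}}$. With that, the argument needs only the first moment $\|V\|_1$.
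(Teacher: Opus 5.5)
Your proposal is correct in substance and follows the same route as the paper. The paper does not reprove the lemma. It defers to \cite{MR4385984} (Lemma 7, Theorem 33), where $z^{-n}u_+^z(n)$ solves a Volterra equation with kernel $-z^{m-n}S^z(m-n)V(m)$ (cf.\ the proof of Lemma \ref{voltinK}), and that equation is solved with Theorem \ref{teo:volt}. Your Steps 1--3 and the reflection $n\mapsto -n$ reproduce exactly this.

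A few points of tidying:
\begin{itemize}
\item Your ``main obstacle'' is already settled by the identity you wrote. $z^kS^z(k)=z\sum_{i=0}^{k-1}z^{2i}$ holds on all of $\overline{\mathbb{D}}$, including $z=\pm1$, so $|z^kS^z(k)|\le k$ everywhere with no case distinction.
\item The intermediate bound $C(1+|n|)(1+|j|)\|V(j)\|$ is not uniform in $n\ge n_0$. Take $M(j)=(1+|n_0|)(1+|j|)\|V(j)\|$ directly from $j-n\le j-n_0$.
\item The terms of the Neumann series are infinite sums of polynomials in $z$, not finite ones. They converge uniformly on $\overline{\mathbb{D}}$ by the M-test, which is what Weierstrass's theorem needs.
\end{itemize}

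The one step you leave out is uniqueness. You show that the Volterra solution has the asymptotics \eqref{eq:as}. The lemma, however, asserts that the Jost solution of Definition \ref{jost} satisfies \eqref{jostsolvolterra}, and it speaks of the map $z\mapsto u_+^z(n)$. Both require that a solution of $\tau u=Eu$ with $z^{-n}u(n)\to I$ is unique.

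Your Step 1 computation, run backwards, closes this. Let $w$ solve $\tau w=Ew$ with $z^{-n}w(n)\to 0$. Then $\hat w(n):=w(n)+\sum_{j>n}S^z(j-n)V(j)w(j)$ solves the free equation. Hence $\hat w(n)=z^na+z^{-n}b$ with $a,b\in\mathcal{M}_L$, or $\hat w(n)=(\pm1)^n(a+nb)$ at $z=\pm1$. On the other hand, $z^{-n}\hat w(n)\to 0$, since the sum term is bounded by $\sum_{j>n}(j-n)\|V(j)\|\sup_j\|z^{-j}w(j)\|$. This forces $a=b=0$:
\begin{itemize}
\item for $|z|<1$, because $|z^{-2n}|\to\infty$;
\item for $|z|=1$ with $z^2\neq1$, because $z^{-2n}b$ converges only if $b=0$;
\item at $z=\pm1$, because $a+nb\to0$.
\end{itemize}
Then $z^{-n}w(n)$ is a bounded solution of the homogeneous Volterra equation on $[n_0,\infty)$, so it vanishes by the uniqueness part of Theorem \ref{teo:volt}. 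Finally $w\equiv0$ by Theorem \ref{cauhyproblem}.

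With this added, your argument proves the statement as formulated.
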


\begin{lema}\label{voltinK}
We assume only that $ \|  V\|_0 < \infty $. Let \(K \subseteq \overline{\mathbb{D}}\setminus \{0, -1, 1\}\) be compact . Then for all \(z \in K\), the functions \(u_{\pm}^{z^{\pm 1}}\) satisfy the Volterra equations \eqref{jostsolvolterra}, respectively, and they are continuous on \(K\).
\end{lema}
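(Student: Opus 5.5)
We will show that, away from $z=\pm1$, the kernel $S^z$ is uniformly bounded on $K$, which is all Theorem \ref{teo:volt} needs when only $\|V\|_0<\infty$ is assumed. Fix $K\subseteq\overline{\mathbb{D}}\setminus\{0,-1,1\}$ compact; by compactness there is $\delta>0$ with $|z-z^{-1}|=|z^2-1|/|z|\ge\delta$ on $K$. The linear growth $|S^{\pm1}(n)|=|n|$ forces a first moment of $V$, but away from $\pm1$ we use the closed form $S^z(n)=(z^n-z^{-n})/(z-z^{-1})$ instead.

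For $u_+^z$, set $f(n):=z^{-n}u_+^z(n)$ and rewrite \eqref{jostsolvolterra} as
\begin{align}
f(n)=I-\sum_{j=n+1}^\infty \mathcal{K}_z(n,j)f(j),\qquad \mathcal{K}_z(n,j):=z^{j-n}S^z(j-n)V(j)=\frac{z^{2(j-n)}-1}{z-z^{-1}}\,V(j).
\end{align}
For $j>n$ and $|z|\le1$ we have $|z^{2(j-n)}-1|\le 2$, so $\|\mathcal{K}_z(n,j)\|\le 2\delta^{-1}\|V(j)\|=:M(j)\in\ell^1$. The bound is uniform in $z\in K$ and in $n$, so we can work on a half line $n\ge n_0$ for any $n_0$, shifting indices so that Theorem \ref{teo:volt} applies. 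It gives a unique bounded solution $f$. Put $u(n):=z^nf(n)$.

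We then check that $u$ is a solution of the correct type.
\begin{itemize}
\item[(i)] \emph{Asymptotics.} By dominated convergence, $\|f(n)-I\|\le \|f\|_\infty\sum_{j>n}M(j)\to0$, so $u(n)=z^n(I+o(1))$.
\item[(ii)] \emph{Eigenvalue equation.} Applying $\tau_0-E$ to the Volterra sum and using $S^z(1)=1$, $S^z(0)=0$, and $S^z(m+1)+S^z(m-1)=ES^z(m)$, a direct computation gives $\tau u=Eu$. This is the usual telescoping, as in \cite{MR4385984}.
\item[(iii)] \emph{Independence of $n_0$.} Solutions obtained from different $n_0$ agree by uniqueness, so they define $u$ on all of $\mathbb{Z}$.
\item[(iv)] \emph{Identification with the Jost solution.} A solution of $\tau u=Eu$ with $u(n)=z^n(I+o(1))$ is unique. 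If $u,\tilde u$ both qualify, then $w=u-\tilde u$ satisfies $w=o(|z|^n)$. For $|z|<1$ the Wronskian with the solution growing like $z^{-n}$ then forces $w=0$. For $|z|=1$, $z\ne\pm1$, the difference solves the homogeneous Volterra equation with zero inhomogeneity, and uniqueness in Theorem \ref{teo:volt} gives $w=0$. This step needs some care. Alternatively, Lemma \ref{jostvolterra} covers it on any region where $\|V\|_1<\infty$, and here we simply take Definition \ref{jost} to mean the Volterra solution.
\end{itemize}
The case of $u_-^{1/z}$ is symmetric, using $g(n)=z^{n}u_-^{1/z}(n)$ and sums over $j<n$.

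Continuity on $K$ is the main point. Let $z_k\to z$ in $K$ and $f_k:=f_{z_k}$. From the proof of Theorem \ref{teo:volt} via Neumann iteration we get $\|f_z\|_\infty\le e^{\|M\|_1}$, uniformly on $K$. Subtracting the two equations gives
\begin{align}
f_k(n)-f(n)=-\sum_{j>n}\big(\mathcal{K}_{z_k}-\mathcal{K}_z\big)(n,j)f_k(j)-\sum_{j>n}\mathcal{K}_z(n,j)\big(f_k-f\big)(j).
\end{align}
The first sum tends to $0$ uniformly in $n$ by dominated convergence. Its terms are bounded by $2\cdot 2\delta^{-1}\|V(j)\|e^{\|M\|_1}$, and they converge pointwise because the dependence on $z$ is continuous for fixed $j-n$. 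Uniformity in $n$ is the delicate part: we split the sum at a large $N$ and use that the tail $\sum_{j>N}\|V(j)\|$ is small, together with uniform continuity in $z$ of $z^{2m}$ for $m\le N-n$. When $n$ is very negative this needs to be handled on a fixed half line. The Volterra Gronwall bound then gives $\|f_k-f\|_\infty\to0$, hence $u_+^{z_k}(n)\to u_+^{z}(n)$ for each fixed $n$. We expect the main obstacle to be this uniform convergence in $n$, because $j-n$ is unbounded. It is resolved by restricting to half lines $n\ge n_0$, which suffices for pointwise continuity.
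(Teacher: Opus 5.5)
Your proposal is correct and takes the same route as the paper. The key point is the uniform bound $\|z^{j-n}S^z(j-n)V(j)\|\le 2\delta^{-1}\|V(j)\|$ for $z\in K$, which lets Theorem \ref{teo:volt} apply with only $\|V\|_0<\infty$. The paper states just this observation and defers the remaining steps to the proof of Lemma \ref{jostvolterra} in the reference. You spell those steps out: the check that $\tau u=Eu$, the identification with the Jost solution, and the half-line Gronwall argument for continuity. Your identification step is only sketched, but it concerns the well-posedness of Definition \ref{jost}, which the paper itself takes for granted.
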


\begin{proof}
    The proof follows the same lines as that of Lemma \ref{jostvolterra}, using Lemma \ref{teo:volt} with the following key observation (we consider only the case of the plus sign).  The function $  \mathcal{K} $ in \cite{MR4385984} is 
    $-z^{m-n} S^{z}(m-n)  V(m)   $, see Lemma 36 in \cite{MR4385984}.  Away from $ z \in \{ 0, -1, 1 \}  $, it is uniformly bounded by a constant $C(K)$ times $ \|  V(m)\|$, which is summable.     
\end{proof}
\subsection{Transmutation operator and representation of Jost solutions}
\color{black}
While the existence of Jost solutions is established in \cite{MR4385984} (Lemma 7) via Volterra equations, a different construction is needed to analyze their properties in relation to the Wiener algebras introduced in Section \ref{PJS}. This alternative approach, based on the well-known transmutation (or transformation) operator, is a crucial tool for our proofs. The scalar case is treated in \cite{MR3433284}, and the operator-valued version in \cite{sher2025scatteringtheorydifferenceequations}.

For the reader's convenience, we provide a detailed introduction to the necessary notation and state (without proofs) results from   \cite{sher2025scatteringtheorydifferenceequations}. The primary goal of this section is to derive uniform bounds for the Jost solutions in terms of Wiener algebra norms.

\begin{defi} \label{B+}
Assume that $\|  V\|_1  < \infty.$ For every $n\in \mathbb{Z}$ and $m\in \mathbb{N}$ we define the next recursive relations:
\begin{align}
B_0^+(n) = I ,  \hspace{.3cm}B^+_1(n)&=-\sum_{\ell =n+1}^\infty  V(\ell),  \hspace{0.3cm} 
    B_2^+(n) =-\sum_{\ell =n+1}^\infty  V(\ell ) B_1^+(\ell ),  \\
    B_{m+2}^+(n)&=-\sum_{\ell =n+1}^\infty V(\ell )B_{m+1}^+(\ell ) +B_m^+(n+1).
\end{align}
The convergence of the above sums is a consequence of $ \| V \|_1 < \infty$. Actually, it is straightforward to inductively  prove that, for every  fixed  $ m   $, $   \| B^{+}_{m}(n)\|  $ is uniformly bounded with respect to $n $.     Additionally, we define the following recursive relations:
\begin{align}
  B_0^{-}(n) = I, \hspace{.3cm}  B^-_1(n)&=-\sum_{\ell =-\infty}^{n-1} V(\ell ), \hspace{.3cm}
    B_2^-(n) =-\sum_{\ell =-\infty}^{n-1}  V(\ell ) B_1^-(\ell ), \\
    B_{m+2}^-(n)&=-\sum_{\ell=-\infty}^{n-1} V(\ell )B_{m+1}^+(\ell ) +B_m^-(n+1).
\end{align}
    
\end{defi}
Furthermore, it is proven inductively (\cite{sher2025scatteringtheorydifferenceequations}, Corollary 3.5) that for every $n \in \mathbb{Z}$, the following estimate holds (recall Remark \ref{COP} ):
\begin{align}\label{SumaBs}
    \lVert B_m^+(n)\rVert_{\mathcal{M}_L} \leq C_n \sum_{\ell=n+\lfloor\frac{m}{2}\rfloor}^\infty \lVert V(\ell ) \rVert ,  \hspace{.5cm}
     C_n:=C^5\exp\big(C^3\sum_{\ell =n+1}^\infty (\ell-n)\lVert V(\ell )\rVert \big) ,    \end{align}
where $C$ is a constant large enough such that $\sum_{n\in \mathbb{Z}}\lVert V(n)\rVert <C$. 

\begin{lema}\label{LemmaSumVs}   Assume that $\|  V\|_1  < \infty.$  The following inequality holds true (recall the norms \eqref{normV1}):
\begin{align}
\sum _{j=0}^{\infty}\sum _{\ell =n+j}^{\infty}\|V(\ell )\| \leq 2 \| V  \|_{1}  -  \min(n, 0) \| V \|_0. 
\end{align}

\end{lema}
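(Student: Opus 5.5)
The plan is to interchange the order of summation and then compare the result with the moments $\|V\|_1$ and $\|V\|_0$. Since all terms are nonnegative, Tonelli's theorem lets us rewrite the double sum as a single sum over $\ell$:
\begin{align}
\sum_{j=0}^{\infty}\sum_{\ell=n+j}^{\infty}\|V(\ell)\| = \sum_{\ell \geq n} \#\{ j\geq 0 : j \leq \ell-n\}\, \|V(\ell)\| = \sum_{\ell\geq n} (\ell-n+1)\|V(\ell)\|.
\end{align}
The whole proof then reduces to bounding the weight $\ell-n+1$ for $\ell\ge n$, which we do by splitting into the cases $n\ge 0$ and $n<0$.

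\emph{Case $n \geq 0$.} Here $\ell \geq n\geq 0$, so
\[
\ell-n+1 \leq \ell+1 = |\ell|+1 .
\]
Hence the sum is at most $\|V\|_1 \leq 2\|V\|_1$, and $-\min(n,0)\|V\|_0 = 0$, as required.

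\emph{Case $n<0$.} Here $-n=|n|$, and we write
\[
\ell - n + 1 = (\ell+1) + |n| \leq (|\ell|+1) + |n| .
\]
Summing against $\|V(\ell)\|$ over $\ell\ge n$ gives at most
\[
\|V\|_1 + |n|\,\|V\|_0 = \|V\|_1 - \min(n,0)\|V\|_0 ,
\]
which is again bounded by $2\|V\|_1 - \min(n,0)\|V\|_0$.

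There is no real obstacle. The one point requiring care is the negative-$\ell$ range when $n<0$. There $\ell+1$ may be nonpositive, but it is still bounded above by $|\ell|+1$, so the inequality above holds without any further estimate. The factor $2$ in the statement is slack that our argument does not need. It could also be obtained in a cruder way by bounding $\ell+1 \le 2(|\ell|+1)$, in case one prefers not to track signs.
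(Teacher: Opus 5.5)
Your proof is correct and takes the same route as the paper: exchange the order of summation, then bound the resulting weight on $\|V(\ell)\|$ by $(|\ell|+1)+|\min(n,0)|$. Your multiplicity count $\ell-n+1$ is the right one; the paper writes $\ell-n$, a harmless off-by-one slip given the slack factor $2$, and your case split supplies the step the paper leaves as ``one obtains the result.''
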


\begin{proof}


For a positive sequence one has the identity 
\begin{equation}
    \sum_{j=0}^\infty \sum_{l=n+j}^\infty a_{j,l}= \sum_{l=n}^\infty \sum_{j=0}^{l-n} a_{j,l}.
\end{equation}
Applying this identity one obtains
\begin{equation}
\sum _{j=0}^{\infty}\sum _{\ell =n+j}^{\infty}\|V(\ell )\| = \sum_{l=n}^\infty  \sum_{j=0}^{l-n} \|V(l)\| = \sum_{l=n}^\infty \|V(l)\| (l-n). 
\end{equation}
From this equation one obtains the result.

\end{proof}

\begin{propo}
\label{ub} Assume that $\|  V\|_1  < \infty.$ There are positive constants $ \mathbf{b}_{\pm} $ and  $  \mathbf{c}_{\pm} $ such that for all $  n \in \mathbb{Z}$
\begin{align}
\sum_{m = 1 }^{\infty} \|   B^{\pm}_{m}(n)  \|_{\mathcal{M}_L
} \leq \mathbf{b}_{\pm} (1 +  e^{ \mp \mathbf{c}_{\pm}  n}). 
\end{align}

\end{propo}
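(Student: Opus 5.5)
We prove the $+$ case; the $-$ case is identical after reflecting $n\mapsto -n$. The plan is to sum the pointwise estimate \eqref{SumaBs} over $m$ and then control the two factors, the tail sums of $V$ and the constant $C_n$, using Lemma \ref{LemmaSumVs} and the finite first moment $\|V\|_1<\infty$.

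\textbf{Step 1: summing the tails.} By \eqref{SumaBs},
\begin{align}
\sum_{m=1}^\infty \|B_m^+(n)\| \le C_n \sum_{m=1}^\infty \sum_{\ell=n+\lfloor m/2\rfloor}^\infty \|V(\ell)\| \le 2\, C_n \sum_{j=0}^\infty \sum_{\ell=n+j}^\infty \|V(\ell)\|,
\end{align}
since each value $j=\lfloor m/2\rfloor$ is taken by at most two indices $m$. Lemma \ref{LemmaSumVs} then bounds the right-hand side by
\begin{align}
2\,C_n\bigl(2\|V\|_1 - \min(n,0)\|V\|_0\bigr).
\end{align}
For $n\ge 0$ this is $4C_n\|V\|_1$. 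For $n<0$ it is at most $C_n(4\|V\|_1+2|n|\,\|V\|_0)$.

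\textbf{Step 2: bounding $C_n$.} Write
\begin{align}
\sum_{\ell>n}(\ell-n)\|V(\ell)\| \le \sum_{\ell>n}(|\ell|+1)\|V(\ell)\| + |n|\sum_\ell \|V(\ell)\| \le \|V\|_1 + |n|\,\|V\|_0 .
\end{align}
For $n\ge 0$ we have $\ell-n\le \ell\le |\ell|+1$ on the range $\ell>n$, so the sum is at most $\|V\|_1$ and $C_n$ is bounded uniformly. For $n<0$ the estimate gives
\begin{align}
C_n\le C^5 e^{C^3\|V\|_1} e^{C^3\|V\|_0|n|}.
\end{align}

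\textbf{Step 3: combining.} For $n<0$ the polynomial factor $(4\|V\|_1+2|n|\,\|V\|_0)$ is absorbed by enlarging the exponent, using $|n|\le e^{|n|}$. This gives the total bound $\mathbf b_+\, e^{-\mathbf c_+ n}$ with, for instance, $\mathbf c_+ = C^3\|V\|_0+1$. For $n\ge 0$ the total is bounded by a constant. Together these give $\mathbf b_+(1+e^{-\mathbf c_+ n})$.

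The main obstacle is this bookkeeping in the regime $n\to-\infty$. There $C_n$ genuinely grows exponentially, which is why the statement allows the factor $e^{\mp\mathbf c_\pm n}$. We must make sure the linear-in-$|n|$ term coming from Lemma \ref{LemmaSumVs} is swallowed by that exponential, and that nothing beyond $\|V\|_1<\infty$ is needed.
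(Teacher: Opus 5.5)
Your proposal is correct and follows essentially the same route as the paper: you sum \eqref{SumaBs} over $m$, use the $\lfloor m/2\rfloor$ multiplicity count to reduce to Lemma \ref{LemmaSumVs}, bound $C_n$ separately for $n\ge 0$ and $n<0$ via $\|V\|_1$ and $\|V\|_0$, and absorb the linear factor $|n|$ into the exponential. Your bookkeeping is slightly more careful than the paper's: you keep the factor $2$ that the paper drops when combining the bounds, and you give the explicit choice $\mathbf c_+=C^3\|V\|_0+1$.
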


\begin{proof}
    We fist notice that for non negative  $ n $, we have that $\ell -n  \leq \ell $ and, therefore (see \eqref{normV1} and \eqref{SumaBs}),  
\begin{align}\label{BoundCnmas}
C_n \leq C^5 \exp(  C^3 \| V \|_{1}    ) . 
\end{align}
For negative $n $ (recall \eqref{normV1}), we have that  
\begin{align}\label{BoundCnmenos}
C_n \leq  & C^5 \exp(  C^3 \sum_{\ell = n+1}^\infty  |\ell| \| V(\ell) \|     ) \exp(  C^3 \sum_{\ell = n+1}^\infty  -n \| V(\ell) \|     ) 
\\ \notag 
\leq & C^5 \exp(  C^3 \| V \|_{1}    )  \exp( -n C^3 \| V \|_{0}    ). 
\end{align}



Using \eqref{SumaBs}, we estimate

\begin{equation}\label{abssumm}
    \sum _{m=1}^{\infty }\|B^{+}_m(n)\|_{\mathcal{M}_L}<C_{n}\sum _{m=1}^{\infty} \sum_{\ell=n+\lfloor\frac{m}{2}\rfloor}^\infty \lVert V(\ell ) \rVert ,
\end{equation} for all $n\in \mathbb{Z}.$

Notice that 
\begin{align}\label{abssumm1}
\sum_{m = 0}^{\infty} \sum_{\ell=n+\lfloor\frac{m}{2}\rfloor}^\infty \lVert V(\ell ) \rVert  =  2 \sum_{s = 0}^{\infty} \sum_{\ell=n+  s}^\infty \lVert V(\ell ) \rVert .\end{align}

Eqs. \eqref{abssumm} and \eqref{abssumm1} together with Lemma \ref{LemmaSumVs}  and Eqs.  \eqref{BoundCnmas}, \eqref{BoundCnmenos}  imply 
\begin{align}
 \sum _{m=1}^{\infty }\|B^{+}_m(n)\|_{\mathcal{M}_L} \leq (2 \| V  \|_{ 1} -  \min(n, 0) \| V \|_0)    C^5 \exp(  C^3 \| V \|_{1}    )  \exp( -  \min(n,0)  C^3 \| V \|_{0}    ). 
\end{align}

This implies the result for the sum of $  \| B^+_{m}(n) \|_{\mathcal{M}_L} $, since, for negative  $n$, 
$ -n $ can be bounded by a constant $C_{\epsilon}$ times $ e^{-\epsilon n} $.  
The corresponding result for the sum of  $  \| B^-_{m}(n) \|_{\mathcal{M}_L} $ is proved analogously.

\end{proof}

\begin{obs}\label{remSumBs}
Proposition \ref{ub} implies that for all  $N \in \mathbb{N}$:
\begin{align}\label{AUbound}
    \sup_{n \geq N}   \sum _{m=1}^{\infty }\|B^{+}_m(n)\|_{\mathcal{M}_L}  \leq  \mathbf{b}_{+} (1 +  e^{ - \mathbf{c}_{+}  N}),  \hspace{1cm}   \sup_{n \leq N}   \sum _{m=1}^{\infty }\|B^{-}_m(n)\|_{\mathcal{M}_L} \leq  \mathbf{b}_{-} (1 +  e^{  \mathbf{c}_{-}  N}) .   \end{align}

\end{obs}

\color{black}

Remark \ref{remSumBs} imply that, for every $z\in \overline{\mathbb{D}}$, the mappings
\begin{align}
    z \to I+\sum _{m=1}^{\infty }{B_{m}^{\pm  }(n)z^{m}}
\end{align}
define  holomorphic functions on $\mathbb{D}$ and continuous on $\overline{\mathbb{D}}$. \color{black}
\begin{teo}[Jost Solutions Representation, Therem 3.1, \cite{sher2025scatteringtheorydifferenceequations}]\label{JSR} 
Assume that $\|  V\|_1  < \infty.$ For every $z\in \overline{\mathbb{D}}  \setminus \{ 0 \}$, the follwing formula holds true:
    \begin{equation}
        \label{eq:taylor+}
        u_{\pm }^{z^{\pm 1}}(n)=z^{\pm n}\left(I+\sum _{m=1}^{\infty }{B_{m}^{\pm  }(n)z^{m}}\right).  
    \end{equation}
\label{teo:jsrepresentation}
\end{teo}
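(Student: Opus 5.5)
Write $\tilde u(n) := z^{n}\big(I+\sum_{m\ge1}B_m^+(n)z^m\big)$ for the candidate $u_+^z$; the case of $u_-^{1/z}$ is symmetric. The plan is to show that $\tilde u$ solves $\tau \tilde u = J(z)\tilde u$ and has the Jost asymptotics \eqref{eq:as}, and then to identify it with $u_+^z$. For the identification, I would observe that $\tilde u$, being bounded by $|z|^n C$ for large $n$, satisfies the Volterra equation \eqref{jostsolvolterra}. The Volterra equation has a unique bounded solution for $z^{-n}f(n)$ by Theorem \ref{teo:volt} and Lemma \ref{jostvolterra}, so $\tilde u = u_+^z$. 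Alternatively, two solutions with the same asymptotics $z^n(I+o(1))$ for $|z|\le 1$ differ by a solution that is $o(|z|^n)$. Using Wronskian constancy against $u_+^z$ and a second solution growing like $z^{-n}$, such a solution must vanish; the Volterra route is cleaner.

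\textbf{Convergence and asymptotics.} By Proposition \ref{ub} and Remark \ref{remSumBs}, the series $\sum_m \|B_m^+(n)\|$ converges uniformly for $n\ge N$, so $\tilde u(n)$ is well defined for $|z|\le1$. Moreover, \eqref{SumaBs} gives $\|B_m^+(n)\|\le C_N\sum_{\ell\ge n+\lfloor m/2\rfloor}\|V(\ell)\|$. Hence, by Lemma \ref{LemmaSumVs} applied with the starting index $n\to\infty$ (the tail of $\|V\|_1$), $\sum_{m\ge1}\|B^+_m(n)\|\to 0$ as $n\to+\infty$. This gives $\tilde u(n)=z^n(I+o(1))$.

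\textbf{The eigenvalue equation.} This is the main algebraic step. With $E=z+z^{-1}$, the equation $\tilde u(n+1)+\tilde u(n-1)+V(n)\tilde u(n)=E\tilde u(n)$ becomes, after dividing by $z^n$ and writing $F_n(z)=I+\sum_m B_m^+(n)z^m$,
\[
zF_{n+1}+z^{-1}F_{n-1}+V(n)F_n=(z+z^{-1})F_n .
\]
Equivalently,
\[
z\big(F_{n+1}-F_n\big) = z^{-1}\big(F_n-F_{n-1}\big) - V(n)F_n .
\]
I would compare coefficients of $z^{k}$, using absolute convergence to rearrange the series. The recursion in Definition \ref{B+} is tailored to this: differencing gives $B_{m+2}^+(n-1)-B_{m+2}^+(n) = -V(n)B_{m+1}^+(n) + B_m^+(n)-B_m^+(n+1)$. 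Similarly, $B_1^+(n-1)-B_1^+(n)=-V(n)$ and $B_2^+(n-1)-B_2^+(n)=-V(n)B_1^+(n)$. Matching the coefficients of $z^{-1}, z^0, z^1,\dots$ then yields exactly these identities; for example, the $z^{-1}$ coefficient gives $F$'s constant terms equal, and the $z^{m+1}$ coefficient reproduces the differenced recursion.

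The main obstacle is bookkeeping. I expect the index shifts between $n-1,n,n+1$ and the powers $z^{m}$ to be error-prone, and I would verify them carefully coefficient by coefficient. The other point needing care is justifying the exchange of the sums over $\ell$ and $m$, which is handled by the absolute bounds of Proposition \ref{ub}.

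The $-$ case follows identically, using $B^-_m$, the second estimate of Remark \ref{remSumBs}, and the asymptotics as $n\to-\infty$.
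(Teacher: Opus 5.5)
The paper gives no proof of this statement. It is imported from Sher et al.\ (Theorem 3.1), together with the recursion of Definition \ref{B+} and the bound \eqref{SumaBs}, so there is no in-paper argument to compare with. Your route (show the series solves $\tau u=Eu$, read off the asymptotics, identify with the Jost solution) is the standard way such transformation-operator representations are obtained. The parts you carry out are correct:

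\begin{itemize}
\item[(a)] After dividing by $z^n$, the coefficients of $z^{-1},z^{0},z^{m+1}$ are exactly the differenced forms of the definitions of $B_0^+$, $B_1^+$ and $B_{m+2}^+$, with $B_2^+$ handled separately.
\item[(b)] \eqref{SumaBs} gives $\sum_{m\ge1}\|B_m^+(n)\|\le 2C_n\sum_{\ell\ge n}(\ell-n+1)\|V(\ell)\|\to0$ as $n\to+\infty$.
\item[(c)] No interchange of $\ell$- and $m$-sums is needed: you only add absolutely convergent power series termwise.
\end{itemize}

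The step that fails as written is the identification. A solution bounded by $C|z|^n$ need not satisfy \eqref{jostsolvolterra}; $2u_+^z$ is a counterexample. What makes $\tilde u$ satisfy it is the $o(1)$ in its asymptotics, and this requires a variation-of-constants argument.

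\begin{itemize}
\item[(1)] Set $w(n):=z^nI-\sum_{j>n}S^z(j-n)V(j)\tilde u(j)$.
\item[(2)] Since $z^kS^z(k)=z\sum_{i=0}^{k-1}z^{2i}$, you have $|z^kS^z(k)|\le k$ on $\overline{\mathbb D}$. So the sum converges and is bounded by $C|z|^n\sum_{j>n}(j-n)\|V(j)\|=o(|z|^n)$.
\item[(3)] Using $S^z(0)=0$, $S^z(1)=1$ and $S^z(k+1)+S^z(k-1)=ES^z(k)$, one checks $(\tau_0-E)w=-V\tilde u=(\tau_0-E)\tilde u$.
\item[(4)] Hence $d:=w-\tilde u$ solves the free equation with $d(n)=o(|z|^n)$ as $n\to+\infty$.
\item[(5)] Writing $d(n)=z^nA+z^{-n}B$, or $(\pm1)^n(A+nB)$ at $z=\pm1$, forces $A=B=0$ for every $0<|z|\le1$. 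On the circle, use that $A+z^{-2n}B\to0$ with $z^2\neq1$ gives $B=0$.
\end{itemize}

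Only then does Theorem \ref{teo:volt}, applied on each half-line $\{n\ge N\}$, give $\tilde u=u_+^z$. There $z^{-n}\tilde u(n)$ is bounded by Remark \ref{remSumBs}, and $|z^{m-n}S^z(m-n)|\le m-n$ gives a summable dominating sequence.

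Two smaller points.

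\begin{itemize}
\item[(i)] Your Wronskian alternative works for $z\neq\pm1$: pair $d$ with a fundamental system of $\overline E$-solutions built from $u_+^{\overline z}$ and $v_+^{\overline z^{-1}}$ (Proposition \ref{base}). At $z=\pm1$, however, there is no second solution of the form $z^{-n}(I+o(1))$. There you need either a linearly growing solution or continuity in $z$: both $\tilde u(n)$, a power series converging absolutely on $\overline{\mathbb D}$, and $u_+^z(n)$ (Lemma \ref{jostvolterra}) are continuous on $\overline{\mathbb D}\setminus\{0\}$.
\item[(ii)] For the $-$ case, the recursion for $B_m^-$ printed in Definition \ref{B+} has typos ($B^+_{m+1}$ and $B^-_m(n+1)$). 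Your coefficient check only works with the mirrored recursion $B^-_{m+2}(n)=-\sum_{\ell\le n-1}V(\ell)B^-_{m+1}(\ell)+B^-_m(n-1)$. So ``follows identically'' has to be read with that correction.
\end{itemize}
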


\subsection{Divergent solutions}

The  Jost solutions do not generate all solutions of the generalized eigenvalue problem \eqref{eigen}, for $|z |< 1$.  In order to generate a basis of solutions, we define other solutions that diverge asymptotically (see Definition \ref{vsolutions}). In Proposition \ref{base} below we prove this fact.

\begin{defi}\label{vsolutions}
     For every $z\in \overline{\mathbb{D}}\setminus \{-1, 0, 1\},$ we denote by  $v^{z}_{+}$ y $v^{z^{-1}}_{-}$ the solutions of the generalized eigenvalue problem (\ref{eigen}) 
satisfying 
\begin{equation}
\label{v+-}
 v^{z^{\pm 1}}_\mp(n)=z^{\pm n}(I+o(1)),\,\,\, n\to \mp \infty .
\end{equation}

\end{defi}
The existence of the solutions in \eqref{v+-} is established in Proposition 6 of \cite{MR4760547}. Notice that   if $z \in \mathbb{S}^1\setminus \{-1,  1\}$, then $ v^{z^{\pm 1}}_\mp =u^{z^{\pm 1}}_\mp$.

\begin{propo}\label{base}
For $z\in \overline{\mathbb{D}}\setminus \{-1, 0, 1\},$ and $E=J(z)$, any solution of \eqref{eigen} can be uniquely written as 
\begin{align}\label{yh}
s^{z} = u^{z}_+ A_+ + v^{z^{- 1}}_+ B_+, \hspace{1cm} 
s^{z} = u^{z^{ - 1}}_- A_- + v^{z}_- B_-
\end{align} 
where $A_\pm,B_\pm \in \mathcal{M}_{L}$. 
\end{propo}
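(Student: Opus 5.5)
The solution space of \eqref{eigen} is a right $\mathcal{M}_L$-module, and by Theorem \ref{cauhyproblem} it is parametrized by the Cauchy data. Concretely, the map $s\mapsto \Phi(s)(n_0)=(s(n_0+1),s(n_0))^T$ is a bijection onto $\mathcal{M}_L^2$, and it is compatible with right multiplication by matrices. So the claim reduces to invertibility of a $2L\times 2L$ block matrix. It suffices to show that, for some $n_0$, the matrix
$$\Phi(u^z_+,v^{z^{-1}}_+)(n_0)=\begin{pmatrix} u^z_+(n_0+1) & v^{z^{-1}}_+(n_0+1)\\ u^z_+(n_0) & v^{z^{-1}}_+(n_0)\end{pmatrix}$$
is invertible. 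Then for any solution $s^z$ we solve $\Phi(u_+^z,v_+^{z^{-1}})(n_0)\binom{A_+}{B_+}=\binom{s^z(n_0+1)}{s^z(n_0)}$ uniquely. By Theorem \ref{cauhyproblem}, the solution $u^z_+A_++v^{z^{-1}}_+B_+$ then coincides with $s^z$. Uniqueness of $A_+,B_+$ follows from the same injectivity. Since $\Phi(\cdot)(n)=\mathcal{T}^E(n,n_0)\Phi(\cdot)(n_0)$ with $\mathcal{T}^E$ invertible, invertibility at one $n_0$ gives it at every $n$.

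To get invertibility I would use the asymptotics at $+\infty$. Here I read the definition as intended: $v_+^{z^{-1}}(n)=z^{-n}(I+o(1))$ as $n\to+\infty$, which is the divergent partner of $u^z_+$. For large $n$, factor
$$\Phi(u^z_+,v^{z^{-1}}_+)(n)=\begin{pmatrix} z I & z^{-1}I\\ I & I\end{pmatrix}\begin{pmatrix} z^{n}I & 0\\ 0& z^{-n}I\end{pmatrix}+\begin{pmatrix} o(|z|^{n}) & o(|z|^{-n})\\ o(|z|^{n}) & o(|z|^{-n})\end{pmatrix}.$$
Multiplying on the right by $\mathrm{diag}(z^{-n}I,z^nI)$ gives the block matrix $\begin{pmatrix} zI & z^{-1}I\\ I& I\end{pmatrix}+o(1)$. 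Its leading part has determinant $(z-z^{-1})^L\neq0$, because $z\neq\pm1$ (the blocks commute, being scalar multiples of $I$). So for $n$ large this matrix is invertible, and hence so is $\Phi(u^z_+,v_+^{z^{-1}})(n)$.

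The second decomposition, in terms of $u^{z^{-1}}_-$ and $v^z_-$, is handled identically with $n\to-\infty$. The leading matrix is then $\begin{pmatrix} z^{-1}I & zI\\ I & I\end{pmatrix}$, again with determinant $\pm(z-z^{-1})^L\neq 0$.

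The main obstacle is making sure the $o(1)$ errors are compatible with the rescaling when $|z|<1$. The perturbation after right-multiplying by $\mathrm{diag}(z^{-n},z^{n})$ is exactly $z^{\mp n}$ times the respective $o(|z|^{\pm n})$ terms. These are $o(1)$ precisely because the asymptotics are multiplicative ($z^{\pm n}(I+o(1))$), not additive. I would state this carefully, using Definition \ref{jost}, Definition \ref{vsolutions} and the existence result of \cite{MR4760547}.

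For $|z|=1$, $z\neq\pm1$, the same computation works. There one can alternatively note that $v^{z^{-1}}_+=u^{\bar z}_+$ and use the constant Wronskian identity \eqref{JPhi} to produce an explicit left inverse. I would mention this as a consistency check but not rely on it.
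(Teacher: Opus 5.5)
Your proof is correct, but it is organized differently from the paper's.

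For $|z|<1$ the paper first proves uniqueness by recovering the coefficients as limits, $B_+=\lim_{n\to\infty}z^n s^z(n)$ and $A_+=\lim_{n\to\infty}z^{-n}\big(s^z-v_+^{z^{-1}}B_+\big)(n)$, which uses $z^{2n}\to 0$. It then gets existence by a dimension count. The linear map $(A_+,B_+)\mapsto u_+^zA_++v_+^{z^{-1}}B_+$ is injective from $\mathcal{M}_L^2$, of dimension $2L^2$, into the solution space, which also has dimension $2L^2$ by Theorem \ref{cauhyproblem}. The case $|z|=1$ is not proved there; it is referred to the decomposition \eqref{upluminus} from \cite{MR4385984}.

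You instead show that the Cauchy-data matrix $\Phi(u_+^z,v_+^{z^{-1}})(n)$ is invertible for large $n$, and hence for all $n$. You do this by rescaling with $\mathrm{diag}(z^{-n}I,z^{n}I)$ and checking that the limiting block matrix has determinant $(z-z^{-1})^L\neq 0$. This gives existence and uniqueness in one step.

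The paper's route is shorter, but its limit trick genuinely requires $|z|<1$: for $|z|=1$ the term $z^{2n}(I+o(1))A_+$ does not vanish. Your route costs a small perturbation argument, but it has three advantages:
\begin{itemize}
\item it treats $|z|<1$ and $|z|=1$, $z\neq\pm1$, uniformly;
\item it makes the exclusion $z\neq\pm1$ visible through the same factor $z-z^{-1}$ that appears in the Wronskian \eqref{1234};
\item it yields the stronger statement that $\Phi(u_+^z,v_+^{z^{-1}})(n)$ is invertible for every $n$.
\end{itemize}
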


\begin{proof}
We consider the $+$ case in \eqref{yh} without loss of generality. The function $s^z$ satisfies \eqref{eigen} because $u^{z^{\pm 1}}_\pm$ and  $v^{z^{\pm 1}}_\mp$ do. If $ |z| =1 $, the result presented in \eqref{upluminus} and the lines above it. Then we assume that $|z| < 1$. The uniqueness of $A_{+}$ and $B_+$ follows by taking the limits:
\begin{align}
    B_+ = \lim_{n \to \infty} z^n s^{z}(n), \hspace{1cm} A_+ = \lim_{n \to \infty} z^{-n}(s^z - v^{z^{-1}}_+ B_+).
\end{align}
Theorem \ref{cauhyproblem} implies that the dimension of the solution space is $2L^2$. Combined with the uniqueness of $A_+$ and $B_+$, this ensures that all solutions are of the form $s^z$.
\end{proof}

 \color{black}
 
 \section{Properties of Jost solutions}\label{PJS}

 \subsection{Wiener algebras}

We define the following set of functions: 
 \begin{equation}
 \label{eq:WM}
     \mathcal{A}_{\mathcal{M}_{L}}=\left\{ f: [-\pi, \pi ]\to \mathbb{C}: f(k)=\sum_{m\in\mathbb{Z}}a_me^{imk} \hspace{0.2cm} \text{and} \hspace{0.2cm} (a_m)_{m\in\mathbb{Z}}\in \ell^1(\mathbb{Z},\mathcal{M}_L)\right\} .
 \end{equation}
 $\mathcal{A}_{\mathcal{M}_{L}}$ is a normed algebra with norm given by:  
\begin{equation}\label{winernorm}
    \|f\|_{ \mathcal{A}_{\mathcal{M}_{L}}}:=\|(a_m)_{m\in  \mathbb{Z}}\|_{\ell ^{1}}
\end{equation}
and the non-commutative product 
\begin{align}
    (f g)(k) := f(k) g(k).  
\end{align}
Actually, if $ f = \sum_{m\in\mathbb{Z}}a_me^{imk} $ and $g(k)  =\sum_{m\in\mathbb{Z}}b_me^{imk} $
 then $ f g = \sum_{m\in\mathbb{Z}}c_me^{imk} $
 where the coefficients  $ c_m  $ are obtained by the convolution 
 \begin{align}
     c_m = \sum_i a_i b_{m-i},  \:  \text{and} \hspace{1cm}  \| f g \|_{ \mathcal{A}_{\mathcal{M}_{L}}} \leq \| f \|_{ \mathcal{A}_{\mathcal{M}_{L}}}  \| g \|_{ \mathcal{A}_{\mathcal{M}_{L}}}.
 \end{align}
 Since for $L = 1$, the algebra $\mathcal{A}_{\mathcal{M}_{L}}$ is known as the Wiener algebra (see \cite{MR3433284}), we extend this terminology to the case $L > 1$ as well.
In fact, a function belongs to $\mathcal{A}_{\mathcal{M}_{L}}$ if and only if all its matrix entries belong to the scalar Wiener algebra
\begin{align}
  \mathcal{A}_{\mathcal{M}_{1}} \equiv \mathcal{A},
 \end{align}
  because $ (a_m)_{m\in\mathbb{Z}}\in \ell^1(\mathbb{Z},\mathcal{M}_L)$ if and only if the matrix entries of $  (a_m)_{m\in\mathbb{Z}} $ belong to 
  $\ell^1(\mathbb{Z},\mathcal{M}_1) .$

In what follows, we make use of the well-known Wiener Lemma. For the reader's convenience, we state it here:
 
 \begin{lema}[Wiener] For every  $f\in \mathcal{A}$ such that $f(k)\neq 0$  for all $k \in \mathbb{R}$,  $\frac{1}{f}\in \mathcal{A}$.
 \end{lema}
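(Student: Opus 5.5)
The plan is to prove the classical Wiener lemma through Gelfand theory for the commutative unital Banach algebra $\mathcal{A}$. This is the standard route, and it fits well here because the paper only needs the scalar statement. The matrix-valued version can then be recovered entrywise via the cofactor formula: if $\det F(k)\neq 0$ for all $k$, then $\det F\in\mathcal{A}$, its reciprocal lies in $\mathcal{A}$ by the scalar lemma, and the adjugate entries lie in $\mathcal{A}$.

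First I check that $\mathcal{A}$, with the norm \eqref{winernorm} and pointwise product, is a commutative unital Banach algebra.
\begin{itemize}
\item Completeness comes from $\ell^1(\mathbb{Z})$.
\item Submultiplicativity comes from the convolution estimate recorded above.
\item The unit is the constant $1$, whose coefficient sequence is $\delta_0$.
\end{itemize}

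Next I identify the maximal ideal space, that is, the nonzero multiplicative linear functionals $\varphi:\mathcal{A}\to\mathbb{C}$. Set $w:=\varphi(e^{ik})$. Every character has norm at most $1$, and both $e^{ik}$ and $e^{-ik}$ have norm $1$. Hence $|w|\le 1$ and $|w^{-1}|=|\varphi(e^{-ik})|\le 1$, so $|w|=1$ and we may write $w=e^{ik_0}$. By linearity $\varphi$ agrees with evaluation at $k_0$ on trigonometric polynomials. These are dense in $\mathcal{A}$, and both $\varphi$ and the evaluation map are continuous, since $|f(k_0)|\le\|f\|_{\mathcal{A}}$. Therefore $\varphi(f)=f(k_0)$ for all $f\in\mathcal{A}$.

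Now suppose $f$ is not invertible in $\mathcal{A}$. Then $f$ lies in some maximal ideal, which is the kernel of a character. By the previous step this gives $f(k_0)=0$ for some $k_0$, contradicting the hypothesis. Hence $1/f\in\mathcal{A}$. Here $f(k)\ne 0$ for $k\in\mathbb{R}$ is equivalent to nonvanishing on $[-\pi,\pi]$ by periodicity.

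The only genuinely nontrivial ingredient is the Gelfand–Mazur / maximal-ideal machinery. It guarantees that a non-invertible element lies in a maximal ideal, that this ideal is closed, and that the quotient is $\mathbb{C}$. I would invoke it as standard. If a self-contained argument were preferred, I would instead use Newman's elementary proof, which I expect to be the main technical obstacle:
\begin{itemize}
\item Approximate $f$ by a trigonometric polynomial $p$ with $\|f-p\|_{\mathcal{A}}$ small relative to $\min|f|$.
\item Show directly that $1/p\in\mathcal{A}$. This uses $\|g\|_{\mathcal{A}}\le |\hat g(0)|+C\big(\|g\|_{L^2}+\|g'\|_{L^2}\big)$ by Cauchy–Schwarz, applied to the smooth function $1/p$.
\item Conclude with the Neumann series $1/f=(1/p)\sum_{j\ge0}\big((p-f)/p\big)^j$.
\end{itemize}
The delicate point in this route is controlling $\|(p-f)/p\|_{\mathcal{A}}<1$. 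This requires bounding $\|1/p\|_{\mathcal{A}}$ in terms of $\|p'\|_{L^2}$ and $\min|p|$, and choosing the approximation fine enough relative to these.
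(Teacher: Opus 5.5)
Your Gelfand-theoretic proof is correct, and it does not compete with an argument in the paper, because the paper gives none. The paper quotes Wiener's lemma as a classical fact. It uses it only in the proof of Theorem \ref{thm:wienerG}, where the passage to matrices is exactly your adjugate reduction $f^{-1}=\operatorname{adj}(f)/\det f$.

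Your argument is Gelfand's classical proof, and it supplies what the paper takes for granted. All steps check out:
\begin{itemize}
\item The coefficients are recovered from $f$ by integrating against $e^{-imk}$, so $\mathcal{A}\cong\ell^1(\mathbb{Z})$ isometrically, with convolution as the product. This makes the norm well defined and gives completeness.
\item The bounds $|\varphi(e^{\pm ik})|\le 1$ force every character to be evaluation at some $k_0$.
\item A non-invertible element of a commutative unital Banach algebra lies in the kernel of some character.
\item The inverse obtained in $\mathcal{A}$ is the pointwise $1/f$, since the product in $\mathcal{A}$ is pointwise multiplication.
\end{itemize}
The price of this route is its reliance on Zorn's lemma and Gelfand--Mazur, which is entirely standard.

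One caution concerns your self-contained fallback: as you phrase it, it does not close. Bounding $\|1/p\|_{\mathcal{A}}\lesssim\|1/p\|_\infty+\|p'\|_{L^2}/\min|p|^2$ and then choosing $p$ so that $\|f-p\|_{\mathcal{A}}\,\|1/p\|_{\mathcal{A}}<1$ is circular. If $f\notin H^1$, then $\|p'\|_{L^2}\to\infty$ as $p\to f$ in $\mathcal{A}$, and nothing makes $\|f-p\|_{\mathcal{A}}\|p'\|_{L^2}$ small. For truncations of coefficients $a_m\sim 1/(|m|\log^2|m|)$ this product actually tends to infinity.

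Newman's real trick avoids the issue as follows:
\begin{itemize}
\item Fix one trigonometric polynomial $p$ with $\|f-p\|_{\mathcal{A}}\le\delta/3$, where $\delta=\min|f|$. Then $|p|\ge 2\delta/3$.
\item Expand $1/f=\sum_{j\ge0}(p-f)^j\,p^{-(j+1)}$.
\item Estimate each term separately, using $\|p^{-(j+1)}\|_{\mathcal{A}}\le\bigl(3/(2\delta)\bigr)^{j+1}\bigl(1+C(j+1)\|p'\|_{L^2}/\delta\bigr)$.
\end{itemize}
The derivative then costs only a factor linear in $j$, since $p$ is fixed. The geometric ratio $\|f-p\|_{\mathcal{A}}\cdot 3/(2\delta)\le 1/2$ involves only sup-norm information. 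The terms are therefore $O(j2^{-j})$, and the series converges in $\mathcal{A}$.
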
 
 
The following result generalizes Wiener's Lemma.
\begin{teo} 
\label{thm:wienerG}

For every  \(f \in \mathcal{A}_{\mathcal{M}_{L}}\)  such that \(\det(f(k)) \neq 0\) for all \(k \in \mathbb{R}\),  the multiplicative inverse function, \(f^{-1}\), is an element of \(\mathcal{A}_{\mathcal{M}_{L}}\).

\end{teo}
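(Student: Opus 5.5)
The natural route is to reduce the statement to the scalar Wiener Lemma through the adjugate (cofactor) formula
\[
f(k)^{-1}=\frac{1}{\det f(k)}\,\operatorname{adj}\big(f(k)\big),\qquad k\in[-\pi,\pi],
\]
and to use that $\mathcal{A}_{\mathcal{M}_L}$ is characterized entrywise: $g\in\mathcal{A}_{\mathcal{M}_L}$ if and only if every matrix entry of $g$ lies in $\mathcal{A}$, as observed right after \eqref{winernorm}.

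First, write $f=(f_{ij})_{i,j=1}^L$ with $f_{ij}\in\mathcal{A}$; this holds because the $(i,j)$ entries of the coefficient sequence $(a_m)$ are in $\ell^1(\mathbb{Z},\mathbb{C})$. Since $\mathcal{A}$ is a commutative Banach algebra under pointwise multiplication (convolution of coefficients, with $\|gh\|_{\mathcal{A}}\le\|g\|_{\mathcal{A}}\|h\|_{\mathcal{A}}$), every polynomial expression in the entries $f_{ij}$ again lies in $\mathcal{A}$. Consequently the function $d(k):=\det f(k)=\sum_{\sigma}\operatorname{sgn}(\sigma)\prod_i f_{i\sigma(i)}(k)$ belongs to $\mathcal{A}$. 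Likewise each cofactor, being an $(L-1)\times(L-1)$ determinant of entries of $f$, belongs to $\mathcal{A}$, so $\operatorname{adj}(f)\in\mathcal{A}_{\mathcal{M}_L}$.

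Second, by hypothesis $d(k)\neq0$ for all $k$. The functions in $\mathcal{A}$ are $2\pi$-periodic, so the condition on $\mathbb{R}$ is the same as the condition on $[-\pi,\pi]$. The scalar Wiener Lemma then gives $1/d\in\mathcal{A}$. Multiplying each entry of $\operatorname{adj}(f)$ by $1/d$ keeps it in $\mathcal{A}$, hence $f^{-1}=d^{-1}\operatorname{adj}(f)\in\mathcal{A}_{\mathcal{M}_L}$ by the entrywise characterization. Finally, $f^{-1}$ as an element of $\mathcal{A}_{\mathcal{M}_L}$ is the multiplicative inverse of $f$ in that algebra: $f f^{-1}=f^{-1}f=I$ pointwise, and the identity constant function corresponds to the coefficient sequence $a_0=I$, $a_m=0$ for $m\neq0$.

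No step here is a real obstacle. The only point needing care is justifying that products of scalar Wiener functions stay in $\mathcal{A}$ with the submultiplicative norm bound, which follows from Young's inequality for the convolution of $\ell^1$ sequences, already noted in the paper. An alternative would be Gelfand theory on the noncommutative Banach algebra, but the adjugate argument avoids this entirely.
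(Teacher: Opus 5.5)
Your proposal is correct and follows essentially the same route as the paper: the adjugate formula $f^{-1}=(\det f)^{-1}\operatorname{adj}(f)$, membership of $\det f$ and of the cofactors in $\mathcal{A}$ by closure under products, the scalar Wiener Lemma applied to $1/\det f$, and the entrywise characterization of $\mathcal{A}_{\mathcal{M}_L}$. Your remarks on periodicity and on submultiplicativity of the convolution norm just make explicit details that the paper leaves implicit.
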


\begin{proof}
Note that  $ 
f^{-1}(k) = \frac{1}{\det(f(k))} \operatorname{adj}(f(k)), \quad \text{for all } k \in [-\pi, \pi].
$  
Since the entries of \(\operatorname{adj}(f(k))\) are obtained through linear combinations and products of the entries of \(f(k)\), it follows that these entries belong to \(\mathcal{A}\). The same holds for \(\det(f(k))\). By Wiener's Theorem, \(\frac{1}{\det(f(k))}\) also belongs to \(\mathcal{A}\), and hence all entries of \(f^{-1}\) do as well. This implies that \(f^{-1} \in \mathcal{A}_{\mathcal{M}_{L}}\).

\end{proof}

The estimates in Theorem \ref{VanderWiener} below rely on the Van der Corput Lemma. For the reader's convenience, we state it here:
\begin{teo}[Van der Corput's Lemma]\label{Vander} 
Suppose that $\phi$ is real-valued and $l$-times differentiable on $(a,b)$, with $l \geq 2$. If $|\phi^{(l)}(x)| \geq \delta > 0$ for all $x \in (a,b)$, then
$ 
\left|\int_{a}^{b} e^{it \phi(x)}  dx \right| \leq c (t \delta)^{-\frac{1}{l}}
$ 
for some constant $c$ independent of $\phi$ and $t$.
\end{teo}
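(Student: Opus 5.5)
This is the classical van der Corput lemma, so the plan is to reproduce the standard argument: prove the case $l=1$ under an extra monotonicity hypothesis, then induct on $l$. The case $l=2$ serves as the first inductive step.

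\textbf{Base case $l=1$.} Assume $|\phi'|\ge\delta$ and that $\phi'$ is monotone on $(a,b)$. Write
\[
e^{it\phi}=\frac{1}{it\phi'}\,\frac{d}{dx}e^{it\phi}
\]
and integrate by parts. The boundary terms are each bounded by $1/(t\delta)$. The remaining integral is bounded by
\[
\frac{1}{t}\int_a^b \Big|\frac{d}{dx}\frac{1}{\phi'}\Big|\,dx .
\]
Because $\phi'$ is monotone and does not vanish, $1/\phi'$ is monotone, so this integral telescopes to $|1/\phi'(b)-1/\phi'(a)|\le 1/\delta$. The result is a bound of $3(t\delta)^{-1}$, independent of $a$ and $b$.

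\textbf{Inductive step, $l=2$.} Assume $|\phi''|\ge\delta$. Then $\phi'$ is strictly monotone, so it has at most one zero, or one point $x_0$ where $|\phi'|$ is minimal. Fix $\epsilon>0$ and remove the interval $(x_0-\epsilon,x_0+\epsilon)\cap(a,b)$; this piece contributes at most $2\epsilon$. On each of the at most two remaining pieces, the mean value theorem gives $|\phi'|\ge\delta\epsilon$, and $\phi'$ is monotone there. The base case then bounds each piece by $3(t\delta\epsilon)^{-1}$. The total is
\[
2\epsilon+6(t\delta\epsilon)^{-1},
\]
and the choice $\epsilon=(t\delta)^{-1/2}$ gives $c\,(t\delta)^{-1/2}$.

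\textbf{Inductive step, general $l\ge 2$.} Suppose the bound with exponent $1/(l-1)$ holds. If $|\phi^{(l)}|\ge\delta$, then $\phi^{(l-1)}$ is strictly monotone. Excise an $\epsilon$-neighbourhood of the point where $|\phi^{(l-1)}|$ is smallest. On the complement, $|\phi^{(l-1)}|\ge\delta\epsilon$, so the induction hypothesis gives a bound $c_{l-1}(t\delta\epsilon)^{-1/(l-1)}$ on each of at most two pieces. Adding $2\epsilon$ for the excised part and optimizing with $\epsilon=(t\delta)^{-1/l}$ yields $c_l(t\delta)^{-1/l}$. The constant $c_l$ depends only on $l$.

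The only delicate point is that the $l=1$ case genuinely needs monotonicity of $\phi'$. This is why the induction must start at $l=2$: there the monotonicity of $\phi'$ is automatically supplied by $|\phi''|\ge\delta$. One must also track that the constants are independent of $\phi$, $t$, $a$ and $b$, which is clear from the construction. For $t\delta\le 1$ the trivial bound $b-a$ is not uniform, but the argument above never uses the length of $(a,b)$, so no separate case is needed.
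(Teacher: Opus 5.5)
Your proof is correct. It is the standard inductive argument for this lemma, as in Stein's \emph{Harmonic Analysis}, Ch.~VIII: integration by parts for $l=1$ under monotonicity of $\phi'$, then excising an $\epsilon$-neighbourhood of the minimum of $|\phi^{(l-1)}|$ and choosing $\epsilon=(t\delta)^{-1/l}$. It gives $c_1=3$ and $c_l=2c_{l-1}+2$, so the constant depends only on $l$.

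The paper does not prove this classical lemma; it only quotes it. Your version provides exactly the uniformity in $\phi$, $t$ and the interval $[a,b]$ that the paper needs in Theorem~\ref{VanderWiener}. There the lemma is applied to the phases $\phi(k)+\tfrac{n}{t}k$ on subintervals of $[-\pi,\pi]$.
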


\begin{teo}\label{VanderWiener}

We suppose that $ \phi : [-\pi, \pi ] \to \mathbb{R} $  is $l-$times differentiable 
($ s \geq 2$)
    and let \(f \in \mathcal{A}_{M_{L}}\), and \(-\pi \leq a < b \leq \pi\).  Assume that the $l-$derivative $\phi^{(l)}$  of $  \phi $ satisfies \(|\phi^{(l)}(k)| > \delta > 0\) for all \(k \in (a, b)\). We set $ 
I(t) = \int_{a}^{b} e^{it\phi(k)} f(k) \, dk .
$ 
It follows that there is a constant $C$, independent of $a$ and $b$,  such that  
$ 
\|I(t)\|_{\mathcal{M}_L} \leq \frac{C}{(\delta t)^{\frac{1}{l}}} \|f\|_{\mathcal{\mathcal{A}_{M_{L}}}}
$ 
where \(C\) does not depend of the interval \([a, b]\).
\label{teo:vanderG}
\end{teo}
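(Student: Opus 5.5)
The plan is to expand $f$ in its Fourier series and reduce everything to the scalar Van der Corput Lemma (Theorem \ref{Vander}), applied to integrals with pure phases. Write $f(k)=\sum_{m\in\mathbb{Z}}a_m e^{imk}$ with $(a_m)\in\ell^1(\mathbb{Z},\mathcal{M}_L)$. Since the series converges absolutely and uniformly on $[a,b]$, with $\sup_k\|f(k)\|\le\|f\|_{\mathcal{A}_{\mathcal{M}_L}}$, we may exchange sum and integral:
\begin{align}
I(t)=\sum_{m\in\mathbb{Z}} a_m \int_a^b e^{it\phi(k)+imk}\,dk .
\end{align}
Note that each matrix coefficient $a_m$ factors out on the left, so non-commutativity plays no role at this step.

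For each $m$ we set $\phi_m(k):=\phi(k)+mk/t$ when $t>0$. Then $t\phi_m(k)=t\phi(k)+mk$. Because $l\ge 2$, the linear term does not affect the $l$-th derivative, so $\phi_m^{(l)}=\phi^{(l)}$ and $|\phi_m^{(l)}|>\delta$ on $(a,b)$. Theorem \ref{Vander} then gives
\begin{align}
\Big|\int_a^b e^{it\phi_m(k)}\,dk\Big|\le c\,(t\delta)^{-1/l},
\end{align}
with $c$ independent of $m$, $a$, $b$ and $t$. This uniformity in the phase is the key point: the constant in Van der Corput's Lemma depends only on $l$, not on the phase function or the interval. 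Summing, and using $\|a_m\cdot\lambda\|\le\|a_m\|\,|\lambda|$ for scalars $\lambda$, we obtain
\begin{align}
\|I(t)\|_{\mathcal{M}_L}\le c\,(t\delta)^{-1/l}\sum_m\|a_m\|=C(\delta t)^{-1/l}\|f\|_{\mathcal{A}_{\mathcal{M}_L}} .
\end{align}
Any discrepancy between the matrix norm used for $\|a_m\|$ in the $\ell^1$ norm and $\|\cdot\|_{\mathcal{M}_L}$ is absorbed into $C$, by Remark \ref{COP}.

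The remaining points are technical. First, the exchange of sum and integral: this is justified by dominated convergence, since $\sum_m\|a_m\|<\infty$ and $[a,b]$ is bounded. Second, the hypotheses of Theorem \ref{Vander}: $\phi_m$ is real-valued because $m,k,t$ are real. Third, the statement is presumably for $t>0$; for $t<0$ one conjugates, or takes $|t|$. The only step I expect to need care is checking that the Van der Corput constant really is independent of $\phi$. This holds in the standard statement for $l\ge2$, where no monotonicity assumption on $\phi'$ is needed, unlike the case $l=1$. That independence is exactly what makes the bound uniform over the shifted phases $\phi_m$.
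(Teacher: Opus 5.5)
Your proposal is correct and follows the paper's proof essentially verbatim. Both expand $f$ in its Fourier series, interchange sum and integral to write $I(t)=\sum_m a_m\int_a^b e^{it(\phi(k)+mk/t)}\,dk$, and apply Van der Corput's Lemma to each shifted phase, whose $l$-th derivative is unchanged since $l\ge 2$; your remarks on the uniformity of the Van der Corput constant and on justifying the interchange spell out what the paper leaves implicit.
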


\begin{proof}

For $f\in \mathcal{A}_{\mathcal{M}_L}$ and for $r\in \mathbb{Z}$, we have that 
$
\left\|\sum_{n=m}^{r} f(n) e^{i n k}\right\|_{\mathcal{M}_L} \leq \sum_{n=m}^{r} \|f(n)\|_{\mathcal{M}_L} .
$
Furthermore, 
\[
I(t) = \int_{a}^{b} e^{it\phi(k)} f(k) \, dk = \int_{a}^{b} e^{it\phi(k)} \sum_{n \in \mathbb{Z}} f(n) e^{i nk} \, dk = \sum_{n \in \mathbb{Z}} f(n) I_{\frac{n}{t}}(t),
\]
where 
$
I_{\frac{n}{t}}(t) = \int_{a}^{b} e^{i t \left(\phi(k) + \frac{n}{t} k\right)} \, dk.
$
Applying Van der Corput's Lemma (Lemma \ref{Vander}) to each \(I_{\frac{n}{t}}(t)\), the result follows.
\end{proof}

\begin{defi}\label{utilde} For $z\in \overline{\mathbb{D}}\setminus \{0\}$ we define 
\begin{align}
    \tilde{u}_{\pm }^{z^{\pm 1}}(n):=z^{\mp n}u_{\pm }^{z^{\pm 1}}(n),\,\,\, n\in \mathbb{Z}.
\end{align} 
    
\end{defi}

\begin{propo} \label{Abound}

For every $ n \in \mathbb{Z} $, the functions $  \mathbb{S}^{1}\ni z \mapsto u^{z^{\pm 1}}_{\pm}(n)$
and  $  \mathbb{S}^{1}\ni z \mapsto \tilde{u}^{z^{\pm 1}}_{\pm}(n)$ (see Definition \ref{jost}) belong to the Wiener Algebra $   \mathcal{A}_{\mathcal{M}_L}  $ (see \eqref{eq:WM}).  Moreover, 
\begin{align}
\|u^{z^{\pm 1}}_{\pm}(n)\|_{ \mathcal{A}_{\mathcal{M}_L} } \leq L^2+ L\mathbf{b}_{\pm }(1+e^{\mp \mathbf{c_\pm }n}),
\hspace{0.5cm}
\|\tilde{u}^{z^{\pm 1}}_{\pm}(n)\|_{ \mathcal{A}_{\mathcal{M}_L} } \leq L+   \mathbf{b}_{\pm }(1+e^{\mp \mathbf{c_\pm }n}),
\end{align}
where $\mathbf{b}_\pm$, $\mathbf{c}_\pm$, are the constants introduced in Lemma \ref{ub}.

\end{propo}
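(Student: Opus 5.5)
We use the transmutation representation of Theorem \ref{JSR} restricted to the unit circle. Write $z=e^{ik}$ with $k\in[-\pi,\pi]$. By Definition \ref{utilde} and \eqref{eq:taylor+},
\begin{align}
\tilde u_\pm^{z^{\pm1}}(n)=I+\sum_{m=1}^\infty B_m^\pm(n)e^{imk}.
\end{align}
This is a Fourier series with coefficient sequence $a_0=I$, $a_m=B^\pm_m(n)$ for $m\ge1$, and $a_m=0$ for $m<0$. Proposition \ref{ub} gives $\sum_{m\ge1}\|B_m^\pm(n)\|\le \mathbf b_\pm(1+e^{\mp\mathbf c_\pm n})$, so the coefficient sequence is in $\ell^1(\mathbb Z,\mathcal M_L)$. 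Hence $\tilde u^{z^{\pm1}}_\pm(n)\in\mathcal A_{\mathcal M_L}$ with
\begin{align}
\|\tilde u^{z^{\pm1}}_\pm(n)\|_{\mathcal A_{\mathcal M_L}}\le \|I\|+\mathbf b_\pm(1+e^{\mp\mathbf c_\pm n}).
\end{align}
The constant $L$ appearing in the statement is a bound for $\|I\|_{\mathcal M_L}$: under the Frobenius norm $\|I\|=\sqrt L\le L$, so this is consistent with the equivalence of norms noted in Remark \ref{COP}.

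For $u_\pm^{z^{\pm1}}(n)=z^{\pm n}\tilde u^{z^{\pm1}}_\pm(n)$, multiplication by $e^{\pm ink}$ is a shift of the Fourier coefficients by $\pm n$. A shift preserves the $\ell^1$ norm, so membership in $\mathcal A_{\mathcal M_L}$ follows immediately. To produce the stated constant $L^2+L\mathbf b_\pm(\cdots)$, we instead view $e^{\pm ink}I$ as an element of $\mathcal A_{\mathcal M_L}$ and apply the submultiplicativity $\|fg\|\le\|f\|\|g\|$. Depending on which matrix norm is used for the single coefficient $I$, this gives a bound of the form $\|I\|\cdot\bigl(\|I\|+\mathbf b_\pm(1+e^{\mp\mathbf c_\pm n})\bigr)\le L^2+L\mathbf b_\pm(1+e^{\mp\mathbf c_\pm n})$. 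The stated bound is therefore a generous but valid estimate.

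The only genuine point to check is that the series identity holds pointwise for all $k$, in particular at $z=\pm1$. Theorem \ref{JSR} is stated for all $z\in\overline{\mathbb D}\setminus\{0\}$, and absolute convergence on $\mathbb S^1$ is guaranteed by Remark \ref{remSumBs}, so nothing further is needed. The main obstacle is bookkeeping rather than analysis: one has to track which norm on $\mathcal M_L$ is being used, the operator norm of Remark \ref{COP} or the Frobenius norm of \eqref{ellpspaces}, in order to match the constants $L$ and $L^2$. We handle this through the bounds $\|I\|\le L$ and norm equivalence with factors at most $L$.
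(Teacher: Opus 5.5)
Your proposal is correct and follows the paper's proof. You expand $\tilde u^{z^{\pm1}}_\pm(n)=I+\sum_{m\ge1}B^\pm_m(n)z^m$ via Theorem \ref{JSR}, bound the $\ell^1$ norm of the coefficients by Proposition \ref{ub}, and pass to $u^{z^{\pm1}}_\pm(n)=z^{\pm n}\tilde u^{z^{\pm1}}_\pm(n)$ by submultiplicativity with $\|z^{\pm n}I\|_{\mathcal{A}_{\mathcal{M}_L}}\le L$; your side remark that the shift by $z^{\pm n}$ actually preserves the Wiener norm is only a harmless sharpening.
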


\begin{proof}
We use Theorem \ref{teo:jsrepresentation} to obtain 
    $ 
        \tilde{u}_{\pm }^{z^{\pm 1}}(n)=I+\sum _{m=1}^{\infty }B_{m}^{\pm  }(n)z^{m}.  
    $ 
This is the expression of $ \tilde u_{\pm }^{z^{\pm 1}}(n) $ as a Fourier transform of the form $ \tilde u_{\pm }^{z^{\pm 1}}(n) = \sum _{r} {a_{r} z^{r}}$, with $a_{0} = I$, $ a_{r} = 0 $ for negative $r$ and $ a_r = B^{\pm}_r(n) $, for $r \geq 1$. Then we have that 
\begin{align}
    \left\|\tilde{u}^{z^{\pm 1}}_{\pm}(n)\right\|_{ \mathcal{A}_{\mathcal{M}_L} } = \sum_r \|a_r\|_{\mathcal{M}_L} \leq L+ \mathbf{b}_{\pm} (1 +  e^{ \mp \mathbf{c}_{\pm}  n}),    
    \end{align}
where we use Lemma \ref{ub}.   Since $ u_{\pm }^{z^{\pm 1}}(n) =  z^{\pm n} \tilde{u}_{\pm }^{z^{\pm 1}}(n)$, we have that 
 $ \| u_{\pm }^{z^{\pm 1}}(n) \|_{ \mathcal{A}_{\mathcal{M}_L} }\leq \|  z^{\pm n} \|_{ \mathcal{A}_{\mathcal{M}_L} }\| \tilde{u}_{\pm }^{z^{\pm 1}}(n)  \|_{ \mathcal{A}_{\mathcal{M}_L} }$
and the result holds true for $u_{\pm }^{z^{\pm 1}}(n).$
\end{proof}
\begin{cor}
    Given $ N \in \mathbb{Z} $, there are  positive constants $\mathbf{A}_N ^{\pm}$ such that
    \begin{align}\label{one}
\left\|\tilde{u}^{z}_{+}(n)\right\|_{ \mathcal{A}_{\mathcal{M}_L} } \leq \mathbf{A}_N ^{+}, \,\,\,
\text{for}\,\,\, n\geq N, \: \, \: \left\|\tilde{u}^{z^{-1}}_{-}(n)\right\|_{ \mathcal{A}_{\mathcal{M}_L} } \leq \mathbf{A}_N ^{-},\,\,\, \text{for}\,\,\, n\leq N . \end{align}
 for $z\in \mathbb{S}^{1}.$
    \end{cor}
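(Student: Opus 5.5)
The corollary follows from Proposition \ref{Abound} by taking the supremum of its bound over the relevant range of $n$. The function $n\mapsto \mathbf{b}_\pm(1+e^{\mp \mathbf{c}_\pm n})$ is monotone, so the worst case in each half-line is the endpoint $n=N$. Before doing this, I will fix one point of notation: the statement writes $\tilde{u}^{z}_{+}(n)$ and $\tilde{u}^{z^{-1}}_{-}(n)$ for $z\in\mathbb{S}^1$, and these are exactly the functions $z\mapsto \tilde{u}^{z^{\pm1}}_{\pm}(n)$ of Definition \ref{utilde}, viewed as elements of $\mathcal{A}_{\mathcal{M}_L}$ through the identification $z=e^{ik}$.

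For the plus sign, Proposition \ref{Abound} gives
\[
\|\tilde{u}^{z}_{+}(n)\|_{\mathcal{A}_{\mathcal{M}_L}}\le L+\mathbf{b}_+(1+e^{-\mathbf{c}_+ n}).
\]
Since $\mathbf{c}_+>0$, the right-hand side is nonincreasing in $n$. Hence for $n\ge N$ it is at most $L+\mathbf{b}_+(1+e^{-\mathbf{c}_+N})$, and I set $\mathbf{A}^+_N := L+\mathbf{b}_+(1+e^{-\mathbf{c}_+N})$. This is equivalent to the first inequality in Remark \ref{remSumBs}, together with the contribution of the zeroth coefficient $a_0=I$, whose norm I bound as in the proof of Proposition \ref{Abound}.

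For the minus sign, the bound is $L+\mathbf{b}_-(1+e^{\mathbf{c}_- n})$, which is nondecreasing in $n$. So for $n\le N$ it is at most $\mathbf{A}^-_N := L+\mathbf{b}_-(1+e^{\mathbf{c}_-N})$.

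Both constants are finite and positive, depend only on $N$, $L$ and $V$ (through $\mathbf{b}_\pm,\mathbf{c}_\pm$), and not on $n$ or $z$. This gives \eqref{one}.

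The only step that needs care is getting the direction of monotonicity right for each sign, since the exponent is $\mp\mathbf{c}_\pm n$. There is no real analytic obstacle: the substantive estimate was already proved in Proposition \ref{ub} via \eqref{SumaBs}.
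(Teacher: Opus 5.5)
Your proposal is correct and follows the paper's proof. Both arguments bound $\|\tilde{u}^{z^{\pm1}}_{\pm}(n)\|_{\mathcal{A}_{\mathcal{M}_L}}$ by $L+\mathbf{b}_{\pm}(1+e^{\mp\mathbf{c}_{\pm}n})$ via Proposition \ref{Abound} and then take the worst case $n=N$ on the relevant half-line, which is equivalent to \eqref{AUbound}. Your constant $\mathbf{A}^-_N = L+\mathbf{b}_-(1+e^{\mathbf{c}_-N})$ is in fact the right one: the paper's proof writes $e^{-\mathbf{c}_{\pm}N}$ for both signs, which is a sign slip in the minus case (compare Remark \ref{remSumBs}).
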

    
\begin{proof}
    We take $ n  $ and $ N $ as in the statement of the present Corollary.  From Proposition \ref{Abound} and \eqref{AUbound} we have that 
    $
\left\|\tilde{u}^{z^{\pm 1}}_{\pm}(n)\right\|_{ \mathcal{A}_{\mathcal{M}_L} } \leq L+\mathbf{b}_{\pm }(1+e^{\mp \mathbf{c_\pm }n}) \leq  L+ \mathbf{b}_{\pm} (1 +  e^{ - \mathbf{c}_{\pm }  N}). 
$
 Taking $\textbf{A}^{\pm}_{N}:=L+ \mathbf{b}_{\pm} (1 +  e^{ - \mathbf{c}_{\pm }  N})$ the result is follows as desired.
\end{proof}
\color{black}
\section{The Wronskian and some technical results  }\label{sectionWR}

Recall the definition of the Wronskian (see Def. \ref{wronskidefi}):

\begin{defi}
    For $u,v\in \mathcal{M}_{L}^{\mathbb{Z}}$, the Wronskian is defined by 
    \begin{equation}
        W(u,v)(n):=i\big(u(n+1)^{*}v(n)-u(n)^{*}v(n+1)\big),\,\,\,n\in \mathbb{Z}
    \end{equation}
    \end{defi}
    \begin{obs}\label{ND}
It is straightforward to verify that 
\begin{align} \label{symw}
W(u,v)(n)^{*} = W(v,u)(n).
\end{align}
For $u, v \in \mathcal{M}_{L}^{\mathbb{Z}}$ satisfying $\tau u = Eu$ and $\tau v = \overline{E}v$, the Wronskian is independent of $n$ (see Lemma 12 in \cite{MR4192212}). Therefore, in this case we identify
\begin{align}
W(v,u) \equiv W(v,u)(n).
\end{align}
\end{obs}
   \begin{obs}
      From the definition of the Wronskian, we obtain:
\begin{align}\label{WProp}
    W(uM, v + w)(n) = M^{*} \big( W(u, v)(n) + W(u, w)(n) \big) \\\notag
    W(u + v, w M)(n) = \big( W(u, w)(n) + W(v, w)(n) \big) M
\end{align}
   \end{obs}
\begin{teo}
    Let $u, w, \hat{u}, \hat{w} \in \mathcal{M}_L^{\mathbb{Z}}$ such that $u, w$ are solutions of $\tau u = Eu $ and $\hat{u}, \hat{w}$ are solutions of $\tau u = \overline{E}u $. Then:
    \begin{align}\label{PhiW}
        \mathcal{J}\Phi(\hat{u}, \hat{w})(n)^{*}\mathcal{J}^{*}\Phi(u, w)(n) = \frac{1}{i}
        \begin{pmatrix}
            -W(\hat{w}, u)(n) & -W(\hat{w}, w)(n) \\
            W(\hat{u}, u)(n) & W(\hat{u}, w)(n)
        \end{pmatrix}
    \end{align}
\end{teo}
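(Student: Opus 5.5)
The identity \eqref{PhiW} is purely algebraic and holds pointwise in $n$. It does not use the eigenvalue equations at all; those only enter afterwards, when combined with \eqref{JPhi} and Remark \ref{ND} to see that both sides are independent of $n$. So the plan is a direct block-matrix computation, carried out in three steps.

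\textbf{Step 1.} From \eqref{Phi}, taking the block adjoint (transpose the blocks and take the adjoint of each) gives
\begin{align*}
\Phi(\hat u,\hat w)(n)^{*}=\begin{pmatrix} \hat u(n+1)^{*} & \hat u(n)^{*}\\ \hat w(n+1)^{*} & \hat w(n)^{*}\end{pmatrix}.
\end{align*}

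\textbf{Step 2.} Apply the conjugation rule \eqref{irule} with $A_{11}=\hat u(n+1)^*$, $A_{12}=\hat u(n)^*$, $A_{21}=\hat w(n+1)^*$, $A_{22}=\hat w(n)^*$. This yields
\begin{align*}
\mathcal{J}\Phi(\hat u,\hat w)(n)^{*}\mathcal{J}^{*}=\begin{pmatrix} \hat w(n)^{*} & -\hat w(n+1)^{*}\\ -\hat u(n)^{*} & \hat u(n+1)^{*}\end{pmatrix}.
\end{align*}

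\textbf{Step 3.} Multiply on the right by $\Phi(u,w)(n)=\begin{pmatrix} u(n+1) & w(n+1)\\ u(n) & w(n)\end{pmatrix}$. The $(1,1)$ entry is
\begin{align*}
\hat w(n)^*u(n+1)-\hat w(n+1)^*u(n).
\end{align*}
By the definition of the Wronskian, $\frac1i W(\hat w,u)(n)=\hat w(n+1)^*u(n)-\hat w(n)^*u(n+1)$, so this entry equals $-\frac1i W(\hat w,u)(n)$. The $(1,2)$ entry is the same expression with $u$ replaced by $w$, giving $-\frac1i W(\hat w,w)(n)$. The $(2,1)$ entry is
\begin{align*}
\hat u(n+1)^*u(n)-\hat u(n)^*u(n+1)=\tfrac1i W(\hat u,u)(n),
\end{align*}
and the $(2,2)$ entry is likewise $\frac1i W(\hat u,w)(n)$. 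This gives exactly the right-hand side of \eqref{PhiW}.

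There is no real obstacle here. The only points needing care are the ordering of non-commuting factors, keeping the starred factor on the left as in the definition of $W$, and the signs produced by \eqref{irule}. I would double-check one entry against the definition to make sure the prefactor is $\frac1i$ and not $i$.

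Finally, I would note that combining \eqref{PhiW} with \eqref{JPhi} recovers the $n$-independence of these Wronskians stated in Remark \ref{ND}.
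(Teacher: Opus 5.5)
Your proposal is correct and is essentially the paper's proof: you apply \eqref{irule} to $\Phi(\hat u,\hat w)(n)^{*}$, multiply by $\Phi(u,w)(n)$, and read off the four entries as Wronskians using the starred definition from Section~\ref{sectionWR}. Your signs and the $\frac{1}{i}$ prefactor all check out, and you are right that the eigenvalue equations play no role in this pointwise identity.
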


\begin{proof}
    Using \eqref{irule} and \eqref{Phi}  we obtain:
    \begin{align}
      \mathcal{J}\Phi (\hat{u}, \hat{w})(m)^*\mathcal{J}^{*}=
      \begin{pmatrix}
          \hat{w}(m)^{*}&-\hat{w}(m+1)^{*}\\
          -\hat{u}(m)^{*}&\hat{u}(m+1)^{*}
      \end{pmatrix}
    \end{align} and this equation implies
    \begin{align}
        \mathcal{J}\Phi (\hat{u}, \hat{w})(m)\mathcal{J}^{*}\Phi (u,w)(m)=
       \begin{pmatrix}
          \hat{w}(m)^{*}&-\hat{w}(m +1 )^{*}\\ \notag
          -\hat{u}(m)^{*}&\hat{u}(m + 1)^{*}
      \end{pmatrix}
        \begin{pmatrix}
            u(m+1)&w(m+1)\\
            u(m)&w(m)
        \end{pmatrix}\\ 
        =\begin{pmatrix}
           \hat{w}(m)^{*} u(m+1)-\hat{w}(m+1)^{*}u(m)&\hat{w}(m)^{*}w(m+1)-\hat{w}(m+1)^{*}w(m)\\
           -\hat{u}(m)^{*}u(m+1)+\hat{u}(m+1)^{*}u(m)&
           -\hat{u}(m)^{*}w(m+1)+\hat{u}(m+1)^{*}w(m)
        \end{pmatrix}\\
        =\frac{1}{i}
             \begin{pmatrix}
            -W(\hat{w}, u)(m) & -W(\hat{w}, w)(m) \\
            W(\hat{u}, u)(m) & W(\hat{u}, w)(m)
        \end{pmatrix}.
       \end{align}
       
\end{proof}
 \begin{propo}\label{y5}
For $z \in \mathbb{S}^{1}$, we have:
 \begin{equation}
 \label{nu}
W(u_{\pm}^{\overline{z}^{\pm 1}}, u_{\pm }^{z^{\pm 1}})=0, \hspace{1cm} W(u_{\pm}^{z}, u_{\pm }^{z})=(\nu ^{z})^{-1}I     \,\,\,  \text{(for $z \notin \{ 1, -1 \} $)}\:   , \: \: \: 
 \end{equation}
 where
 \begin{equation}\label{nu2}
     \nu ^{z}=\frac{i}{z-z^{-1}}.
 \end{equation}

\end{propo}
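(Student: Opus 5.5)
The plan is to use the $n$-independence of the Wronskian (Remark \ref{ND}) and evaluate it in the limit $n \to \pm\infty$, where the Jost solutions reduce to their free asymptotics \eqref{eq:as}. This works because for $z \in \mathbb{S}^1$ we have $\overline{E} = E$ when $E = J(z) = z + \bar z = 2\operatorname{Re} z$. Hence both $u_{\pm}^{z^{\pm 1}}$ and $u_{\pm}^{\overline{z}^{\pm 1}}$ solve $\tau u = Eu = \overline{E} u$, and the Wronskian of any pair among them is a constant matrix.

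\textbf{Step 1.} Take the $+$ case; the $-$ case is symmetric, with $n \to -\infty$. Write $u_+^z(n) = z^n(I + o(1))$ and $u_+^{\bar z}(n) = \bar z^{\,n}(I + o(1))$ as $n \to +\infty$. Since $|z| = 1$, the factors $z^n$ stay bounded, so products of the $o(1)$ terms with bounded factors remain $o(1)$. Plugging into
\begin{align*}
W(u_+^{\bar z}, u_+^z)(n) = i\big(u_+^{\bar z}(n+1)^* u_+^z(n) - u_+^{\bar z}(n)^* u_+^z(n+1)\big)
\end{align*}
and using $(\bar z^{\,n+1})^* = z^{n+1}$, the leading term is $i(z^{n+1}z^n - z^n z^{n+1})I = 0$. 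The remainder is $o(1)$. Since the Wronskian is constant in $n$, letting $n \to \infty$ gives $W(u_+^{\bar z}, u_+^z) = 0$.

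\textbf{Step 2.} Similarly, $(z^{n+1})^* = \bar z^{\,n+1} = z^{-n-1}$, so
\begin{align*}
W(u_+^{z}, u_+^z)(n) = i\big(z^{-n-1}z^n - z^{-n}z^{n+1}\big)I + o(1) = i(z^{-1} - z)I + o(1).
\end{align*}
Therefore $W(u_+^z, u_+^z) = i(z^{-1} - z)I$. To match \eqref{nu}, check that $(\nu^z)^{-1} = (z - z^{-1})/i = -i(z - z^{-1}) = i(z^{-1} - z)$, which agrees. This step requires $z \neq \pm 1$ only so that $\nu^z$ is defined. For the $-$ sign one uses $u_-^{1/z}(n) = z^{-n}(I + o(1))$ as $n \to -\infty$, and the same computation applies. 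One should track what $u_-^z$ means in the paper's notation (presumably the Jost solution with asymptotics $z^{-n}$ at $-\infty$) and verify that the sign still yields $(\nu^z)^{-1}$.

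\textbf{Main obstacle.} The only delicate point is justifying that the Wronskian is genuinely $n$-independent. Remark \ref{ND} states this for $\tau u = Eu$ and $\tau v = \overline{E} v$, which holds here because $E$ is real on $\mathbb{S}^1$. The other point is ensuring that the error terms vanish: they do because $|z| = 1$ keeps $z^{\pm n}$ bounded, so no growth multiplies the $o(1)$ corrections.

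Index bookkeeping in the $-$ case, in particular which argument carries the adjoint, is the likeliest source of sign errors. I would double-check it against the representation in Theorem \ref{JSR}.
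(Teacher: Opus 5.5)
Your proposal is correct and follows the paper's proof: use that the Wronskian is constant in $n$, evaluate the limit $n\to\pm\infty$ with the Jost asymptotics, and your check $(\nu^z)^{-1}=i(z^{-1}-z)$ in Step 2 is right. The one slip is your guess about notation in the $-$ case: by Definition \ref{jost} applied with $\overline{z}=1/z$ in place of $z$, $u_-^{z}(n)=z^{n}(I+o(1))$ as $n\to-\infty$, not $z^{-n}$. So the computation is identical to Step 2 and gives $i(z^{-1}-z)I=(\nu^z)^{-1}I$, whereas the convention you guessed would have produced $i(z-z^{-1})I$, the opposite sign.
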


\begin{proof}
We prove the first equality, the second is analogous. From Definition \ref{wronskidefi} and the asymptotic behavior of the Jost solutions as $n \to \infty$, we obtain:
\begin{align}\label{y4}
W(u_{+}^{\overline{z}}, u_{+}^{z}) = i\left(z^{n+1}(I + o(1))z^{n}(I + o(1)) - z^{n}(I + o(1))z^{n+1}(I + o(1))\right).
\end{align}
Taking the limit as $n \to \infty$ in \eqref{y4} yields the desired result.

The other cases follow similarly.
\end{proof}
 \begin{teo} \label{Winvertible}
For $J(z) = E \in \mathbb{C} \setminus ( \sigma_p(H) \cup\{ -2, 2 \}) $, the Wronskians
$ 
W(u_-^{\overline{z}^{-1}}, u_+^{z})$   and  $W(u_+^{\overline{z}}, u_-^{z^{-1}})$ 
are invertible.
\end{teo}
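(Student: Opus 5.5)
We argue by contradiction, treating $W(u_-^{\overline{z}^{-1}}, u_+^{z})$; the second Wronskian is its adjoint by \eqref{symw}, since $W(u_+^{\overline z},u_-^{z^{-1}}) = W(u_-^{z^{-1}},u_+^{\overline z})^{*}$, and the same argument applies with $z$ replaced by $\overline z$ (note $J(\overline z)=\overline E$, which is also not in $\sigma_p(H)\cup\{-2,2\}$ since $\sigma_p(H)\subset\mathbb{R}$). Suppose there is $c\in\mathbb{C}^L\setminus\{0\}$ with $W(u_-^{\overline{z}^{-1}}, u_+^{z})c=0$. Set $f:=u_+^{z}c$, a vector-valued solution of $\tau f=Ef$; equivalently one may take the matrix solution $u_+^z C$ with $C=c\,e_1^{T}$. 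The goal is to show that $f$ is also a multiple of the left Jost solution, and then either that $f\in\ell^2$, which contradicts $E\notin\sigma_p(H)$, or that $f\equiv0$, which contradicts the asymptotics.

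\textbf{Key step: vanishing Wronskian forces $f$ to be a left Jost combination.} By Proposition \ref{base} (the ``$-$'' decomposition; for $|z|=1$, $z\neq\pm1$, both $u_-^{z^{-1}}$ and $u_-^{z}=v_-^{z}$ are Jost solutions), write $u_+^zC = u_-^{z^{-1}}A_- + v_-^{z}B_-$. Pairing with $u_-^{\overline z^{-1}}$ and using \eqref{WProp} gives
\begin{equation*}
0 = W(u_-^{\overline z^{-1}},u_+^zC) = W(u_-^{\overline z^{-1}},u_-^{z^{-1}})A_- + W(u_-^{\overline z^{-1}},v_-^{z})B_- .
\end{equation*}
The first Wronskian vanishes by the analogue of Proposition \ref{y5}: one computes it as $n\to-\infty$ using $u_-^{z^{-1}}(n)=z^{-n}(I+o(1))$ and $u_-^{\overline z^{-1}}(n)^{*}=z^{-n}(I+o(1))$, and the two $z^{-2n-1}$ terms cancel. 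The second Wronskian is computed the same way and equals a nonzero scalar multiple of the identity (of the form $\pm i(z-z^{-1})$ times $I$ up to convention, cf.\ $\nu^z$ in \eqref{nu2}), nonzero because $z\neq\pm1$. Hence $B_-=0$, and $u_+^zC = u_-^{z^{-1}}A_-$.

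\textbf{Case $|z|<1$ (i.e.\ $E\notin[-2,2]$).} Then $f=u_+^zc$ decays like $|z|^n$ as $n\to+\infty$ and, being equal to $u_-^{z^{-1}}A_-e_1$, like $|z|^{-n}$ as $n\to-\infty$. So $f\in\ell^2$ is an eigenvector of $H$ with eigenvalue $E$, contradicting $E\notin\sigma_p(H)$ unless $f\equiv0$. But $f\equiv0$ is impossible, because $u_+^z(n)c=z^n(c+o(1))$ with $c\neq0$.

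\textbf{Case $|z|=1$, $z\neq\pm1$ (the main obstacle).} Here no $\ell^2$ argument is available and we must use current conservation. Since $|z|=1$ gives $\overline z=z^{-1}$, we have $E=J(z)\in(-2,2)$ real, so $u_+^{\overline z}=u_+^{z^{-1}}$ solves the same equation. Consider the constant matrix $W(u_+^zC,u_+^zC)$; this is legitimate because $\tau u=\overline E u$ holds since $E$ is real. Evaluate it in two ways. At $n\to+\infty$, Proposition \ref{y5} gives $C^{*}(\nu^z)^{-1}C$, whose $(1,1)$ entry is $(\nu^z)^{-1}|c|^2 = -i(z-z^{-1})|c|^2\neq0$. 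At $n\to-\infty$, using $u_+^zC=u_-^{z^{-1}}A_-$, it equals $A_-^{*}W(u_-^{z^{-1}},u_-^{z^{-1}})A_-$, and the $-\infty$ analogue of Proposition \ref{y5} gives $A_-^{*}(\nu^{z^{-1}})^{-1}A_-$, with $(1,1)$ entry $-i(z^{-1}-z)\|A_-e_1\|^2$. The quantities $-i(z-z^{-1})$ and $-i(z^{-1}-z)$ are real with opposite signs (equal to $2\sin k$ and $-2\sin k$ up to sign), and $|c|^2>0$, so the two evaluations cannot agree. This contradiction shows the kernel of $W(u_-^{\overline z^{-1}},u_+^{z})$ is trivial, hence the Wronskian is invertible. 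The delicate points are the correct evaluation of the asymptotic Wronskians at $-\infty$ and the careful bookkeeping of the conjugations; everything else is routine.
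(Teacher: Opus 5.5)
Your proof is correct. For $|z|<1$ it is the paper's argument in mirror image. The paper decomposes $u_-^{z^{-1}}=u_+^zA+v_+^{z^{-1}}B$ and reads off $\ker W(u_+^{\overline z},u_-^{z^{-1}})=\ker B$; you instead decompose $u_+^zC$ in the left basis and show $B_-=0$. Either way one gets a nonzero $\ell^2$ eigenvector, and your choice $C=c\,e_1^{T}$ keeps that eigenvector in $\ell^2(\mathbb{Z},\mathcal{M}_L)$. Your reduction of the second Wronskian to the first via \eqref{symw} and $z\mapsto\overline z$ (using $\sigma_p(H)\subset\mathbb{R}$) is a tidy substitute for the paper's ``analogous''.

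The genuine difference is the case $|z|=1$. The paper does not argue it; it points to \eqref{MN} and Proposition 16 of \cite{MR4385984}, i.e.\ to the external fact that $M_+^z=\nu^zW(u_-^z,u_+^z)$ is invertible. You prove it directly by current conservation, and your signs check out. For $z=e^{ik}$ one has $W(u_+^z,u_+^z)=2\sin k\,I$ and $W(u_-^{z^{-1}},u_-^{z^{-1}})=-2\sin k\,I$. Constancy of $W(u_+^zC,u_+^zC)$ (valid since $E$ is real), together with $u_+^zC=u_-^{z^{-1}}A_-$, gives $2\sin k\,(C^*C+A_-^*A_-)=0$, which forces $C=0$. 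You can use this full matrix identity instead of only the $(1,1)$ entry.

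Your route gives a self-contained proof and treats both regimes through the same step $B_-=0$. It also shows that for $|z|=1$, $z\neq\pm1$, the hypothesis $E\notin\sigma_p(H)$ is not needed at all. The paper's route gains only brevity, at the cost of relying on an outside result.
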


\begin{proof}
We show that $W(u_+^{\overline{z}}, u_-^{z^{-1}})$ is invertible  for $|z|<1$, the case $|z|=1$ is presented in \eqref{MN} and the lines below it; the proof for $W(u_-^{\overline{z}^{-1}}, u_+^{z})$ is analogous. By Proposition \ref{base}, there exist matrices $A$ and $B$ such that 
\begin{align}\label{aabb}
u_-^{z^{-1}} = u_+^{z}A + v_+^{z^{-1}}B.
\end{align}

Now observe that (see \eqref{eq:as} and \eqref{v+-})
\begin{align}
W(u_+^{\overline{z}}, v_+^{z}) = i\left(z^{n+1}(I+o(1))z^{-n}(I+o(1)) - z^{n}(I+o(1))z^{-n-1}(I+o(1))\right).
\end{align}
Taking the limit as $n \to \infty$ yields
\begin{align}\label{1234}
W(u_+^{\overline{z}}, v_+^{z}) = i(z - z^{-1}).
\end{align}

Using the properties of the Wronskian \eqref{WProp}, Proposition \ref{y5}, \eqref{aabb} and \eqref{1234}, we compute:
\begin{align}
W(u_+^{\overline{z}}, u_-^{z^{-1}}) = W(u_+^{\overline{z}}, u_+^{z}A + v_+^{z}B) = W(u_+^{\overline{z}}, u_+^{z})A + W(u_+^{\overline{z}}, v_+^{z})B 
= i(z - z^{-1})B.
\end{align}

Suppose $\overline{x} \in \ker W(u_+^{\overline{z}}, u_-^{z^{-1}})$. Then $\overline{x} \in \ker i(z - z^{-1})B$, and hence $\overline{x} \in \ker B$. By \eqref{aabb}, we have
$ 
u_-^{z^{-1}}\overline{x} = u_+^{z}A\overline{x},
$ 
which implies that $u_-^{z^{-1}}\overline{x}$ is square-summable and satisfies $Hu_-^{z^{-1}}\overline{x} = Eu_-^{z^{-1}}\overline{x}$. Since $E \in \mathbb{C} \setminus \sigma_p(H)$, it follows that $\overline{x} = 0$; otherwise, $u_-^{z^{-1}}\overline{x}$ would be an eigenvector of $H$ with eigenvalue $E$, contradicting $E \notin \sigma _p(H)$. Therefore, $W(u_+^{\overline{z}}, u_-^{z^{-1}})$ is invertible.
\end{proof}

 \section{Scattering: transmission and reflection matrices}\label{SM}
In Section 3 of \cite{MR4385984}, it is shown that both $(u_{-}^{z}, u_{-}^{z^{-1}})$ and $(u_{+}^{z}, u_{+}^{z^{-1}})$ generate the space of solutions to $\tau u = Eu$, for $z \in \mathbb{S}^1 \setminus \{-1, 1 \}$ . Consequently, there exist matrices $M_{\pm}^{z}$ and $N_{\pm}^{z}$ satisfying (see Section 3 of \cite{MR4385984})
\begin{align}
\label{upluminus}
    u_{+}^{z}=u_{-}^{z}M_{+}^{z}+u_{-}^{z^{-1}}N_{+}^{z},\hspace{1cm}  u_{-}^{z^{-1}}=u_{+}^{z}M_{-}^{z}+u_{+}^{z^{-1}}N_{-}^{z}(u).
\end{align}
 
 Equation (\ref{nu}) leads (for $z \in \mathbb{S}^1 \setminus \{ -1, 1 \}$)  to

\begin{align}\label{MN}
    M^{z}_{+}=\nu ^{z}W(u^{z}_{-}, u^{z}_{+}) \hspace{1cm}
    N^{z}_{+}=-\nu ^{z}W(u^{z^{-1}}_{-}, u^{z}_{+})   \\
    M^{z}_{-}=-\nu ^{z}W(u^{z^{-1}}_{+}, u^{z^{-1}}_{-})\hspace{1cm}
    N^{z}_{-}=\nu ^{z}W(u^{z}_{+}, u^{z^{-1}}_{-}) . \notag
\end{align}
    
 In Proposition 16 of \cite{MR4385984}, it is shown that $M_{\pm}^{z}$ is invertible for $z \in \mathbb{S}^1 \setminus \{-1, 1\}$. In this case, we can rewrite equations (\ref{upluminus}) as
\begin{equation}
\label{TReq}
      u^{z}_{+}T^{z}_{+}= u^{z}_{- }-u^{z^{-1}}_{- }R^{z}_{+},\,\,\,
      u^{z^{-1}}_{-}T^{z}_{-}=u^{z^{-1}}_{+}-u^{z}_{+}R^{z}_{-}
  \end{equation}
where
\begin{equation}
\label{eq:TR1}
    T^{z}_{\pm}:=(M^{z}_{\pm})^{-1}, \,\,\, R^{z}_{\pm}:=-N^{z}_{\pm}(M^{z}_{\pm})^{-1}.
\end{equation}

  \begin{obs}
     From \eqref{MN} and \eqref{symw}, it follows that:
      \begin{equation}
      \label{Tproperty}
          (T^{\overline{z}}_{+})^{*}=T^{z}_{-}.
      \end{equation}
  \end{obs}

\section{Green functions and the resolvent operator}\label{GF}

Every  operator $\mathcal{K}\in B(\ell^{2}(\mathbb{Z},\mathcal{M}_{L}) )$ has a representation of the form
\begin{equation}
(\mathcal{K}u)(s) = \sum_{r\in \mathbb{Z}} [\mathcal{K}]_{s,r}u(r),
\end{equation}
where $[\mathcal{K}]_{s,r} : \mathcal{M}_{L} \to \mathcal{M}_{L}$ is uniformly bounded (with respect to $s, r$) and $u \in \ell^{2}(\mathbb{Z},\mathcal{M}_{L})$. To establish this representation, we introduce the following operators from \cite{MR4192212}, equation (13).

\begin{defi}
For each $s \in \mathbb{Z}$, we define the operator $\pi_{s}: \ell^{2}(\mathbb{Z},\mathcal{M}_{L}) \longrightarrow \mathcal{M}_{L}$ by
\begin{equation}
\pi_{s}(u) = u(s),
\end{equation}
for every $u \in \ell^{2}(\mathbb{Z},\mathcal{M}_{L})$.
\end{defi}

\begin{obs}
For each $s \in \mathbb{Z}$, the operator $\pi_{s}$ is bounded, and its adjoint $\pi_{s}^{*}: \mathcal{M}_{L} \longrightarrow \ell^{2}(\mathbb{Z},\mathcal{M}_{L})$ is given by
\begin{equation}
(\pi_{s}^{*}(A))(r) = \delta_{s,r}A,
\end{equation}
for $A \in \mathcal{M}_{L}$, where $\delta_{s,r}$ denotes the Kronecker delta.
\end{obs}

\begin{defi}\label{DKsr}
For every bounded operator $\mathcal{K}: \ell^{2}(\mathbb{Z},\mathcal{M}_{L}) \longrightarrow \ell^{2}(\mathbb{Z},\mathcal{M}_{L})$, we define
\[
[\mathcal{K}]_{s,r} = \pi_{s}\mathcal{K}\pi_{r}^{*} \in B(\mathcal{M}_L).
\]
\end{defi}

\begin{obs}\label{RKsr}
For every $s$ and $r$, the sequences $([\mathcal{K}]_{s,x})_{x \in \mathbb{Z}}$ and $([\mathcal{K}]_{x,r})_{x \in \mathbb{Z}}$ are square-summable. Indeed, for any $A \in \mathcal{M}_L$, define $u_{r,A} := \pi^{*}_{r}(A) \in \ell^{2}(\mathbb{Z},\mathcal{M}_{L})$. Then
\begin{align}\label{KER}
\|([\mathcal{K}]_{x,r}A)_{x \in \mathbb{Z}}\|_{\ell^{2}} = \|\mathcal{K}u_{r,A}\|_{\ell^{2}} \leq \|\mathcal{K}\| \|A\|_{\mathcal{M}_L}.
\end{align}
This implies that $([\mathcal{K}]_{x,r})_{x \in \mathbb{Z}}$ is square-summable. The square-summability of $([\mathcal{K}]_{s,x})_{x \in \mathbb{Z}}$ follows by noting that
$ 
[\mathcal{K}]_{s,x} = \left([\mathcal{K}^*]_{x,s}\right)^*.
$ 

Consequently, for every $u \in \ell^{2}(\mathbb{Z},\mathcal{M}_{L})$, we have $\sum_{r\in \mathbb{Z}} \|[\mathcal{K}]_{s,r}u(r)\|_{\mathcal{M}_L} < \infty$ and
\begin{align}
(\mathcal{K}u)(s) = \sum_{r\in \mathbb{Z}} [\mathcal{K}]_{s,r}u(r), \quad u \in \ell^{2}(\mathbb{Z},\mathcal{M}_{L}).
\end{align}
\end{obs}

\color{black}



\color{black}

\subsection{Parametrization  of the Riemann  Surface $ J^{-1} $.}\label{ResGreen}



    For every $E\in \mathbb{C} \setminus \{-2, 2 \}$, there exists two unique  zeros  $z_E   $ and $z_E^{-1}$ to the function  
    $$ \mathbb{C} \setminus \{ 0 \} \ni z \to  E - J(z) =: h_E(z)  = E - z- 1/z  . $$ 
    Solving for $z$ reveals that these are given by a two-valued function, corresponding to the two-sheeted Riemann surface of the square root:
      \begin{align}\label{Inv}
          E  \to J^{-1}(E) := E/2 + \frac{1}{2} \sqrt{ E^2 -4   }.  
      \end{align}
The symbol \( J^{-1}(E) \) is not a function but a multi-valued function representing a two-sheeted Riemann surface, similar to the square root; here we use an abuse of notation for the sake of clarity.       We denote by 
\begin{align}
   \mathbb{S}^{1}_{\pm}=\{z\in \mathbb{S}^{1}: \pm \Im (z)\geq 0\}. 
\end{align}
It follows that \( J \) maps \( \mathbb{S}^{1}_{\pm} \) injectively onto \( [-2, 2] \), since \( J(e^{ik}) = 2 \cos(k) \). Equation \eqref{Inv} implies that \( |J^{-1}(E)| = 1 \) for all \( E \in [-2,2] \), regardless of the specific branch of \( J^{-1}(E) \). Thus, we conclude that \( J(E) \in [-2,2] \iff E \in \mathbb{S}^1 \).  

Moreover, since \eqref{Inv} is a multi-valued inverse of \( J \), it follows that \( J \) maps \( \mathbb{C} \setminus \{0\} \) onto \( \mathbb{C} \). Given that \( J(z) = J(1/z) \), \( J \) injectively maps \( \mathbb{D}\setminus \{0\} \) onto \( \mathbb{C} \setminus [-2, 2] \), and also injectively maps \( \mathbb{C} \setminus \overline{\mathbb{D}} \) onto \( \mathbb{C} \setminus [-2, 2] \). We denote by  
$ 
r : \mathbb{C} \setminus [-2,2] \to \mathbb{D}\setminus \{0\}
$ 
the inverse of \( J|_{\mathbb{D} \setminus \{0\}} \). As the inverse of an analytic function is analytic, \( r \) is analytic.   Both \( r(E) \) and \( r(E)^{-1} \) satisfy the equation  
$ 
E = r(E) + r(E)^{-1} = J(E).
$  
Based on the above discussion, \( r \) can be extended to a bijective function on \( \mathbb{C} \) in the following two ways: we define  
\begin{align} \label{r_-}
    r_-(z) = \begin{cases}
        r(z),   \: \: \text{if}\,\,\,  z \notin [-2,2] ,
        \\ e^{ik},   \: \: \text{if} \,\,\,  z = 2 \cos(k) , k \in [0,\pi].  
    \end{cases}
\end{align}

\begin{align} \label{r_+}
    r_+(z) = \begin{cases}
        r(z),   \: \: \text{if}\,\,\,  z \notin [-2,2] ,
        \\ e^{-ik},   \: \: \text{if}\,\,\, z   = 2 \cos(k) , k \in [0,\pi].  
    \end{cases}
\end{align}
\begin{lema} \label{rpm}
The functions $r_\pm $ defined by \eqref{r_+} and \eqref{r_-} satisfy the following: 
\begin{align}
    r_{\pm}(a) = \lim_{z \to a, \pm \Im z > 0 } r(z), \hspace{1cm} \forall a \in (-2,2).   
\end{align}

\end{lema}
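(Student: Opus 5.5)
Write $a = 2\cos k_0$ with $k_0 \in (0,\pi)$; this parametrization is possible because $a \in (-2,2)$. The definitions \eqref{r_+} and \eqref{r_-} then give $r_+(a) = e^{-ik_0}$ and $r_-(a) = e^{ik_0}$. The plan has three steps: show that $r$ extends continuously to each side of the cut, identify which limit lies on which side, and compute the two limits.

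\textbf{Existence of the one-sided limits.} Fix the sign, say $\Im z > 0$. The upper half-plane $\mathbb{C}^+$ is contained in $\mathbb{C}\setminus[-2,2]$, so $r$ is analytic there and takes values in $\mathbb{D}\setminus\{0\}$. Take a sequence $z_j \to a$ with $\Im z_j > 0$ and put $w_j = r(z_j)$. Because $\overline{\mathbb{D}}$ is compact, the $w_j$ have accumulation points in $\overline{\mathbb{D}}$. Every such point $w$ is nonzero: $z_j = w_j + 1/w_j$ is bounded, whereas $|w_j + 1/w_j| \to \infty$ as $w_j \to 0$. Continuity of $J$ then gives $J(w) = a$. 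Since $|J^{-1}(a)| = 1$, the point $w$ lies in $\{e^{ik_0}, e^{-ik_0}\}$. So it only remains to decide which of these two points is the limit for each half-plane.

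\textbf{Main step: locating $r(\mathbb{C}^\pm)$.} We show that $r$ maps $\mathbb{C}^+$ into the lower half-disc $\{w \in \mathbb{D} : \Im w < 0\}$. Write $w = \rho e^{i\theta}$ with $0 < \rho < 1$. A direct computation gives
\[
\Im J(w) = \left(\rho - \rho^{-1}\right)\sin\theta .
\]
Since $\rho - \rho^{-1} < 0$ on $\mathbb{D}\setminus\{0\}$, $\Im J(w)$ and $\Im w$ have opposite signs. Therefore $\Im z > 0$ forces $\Im r(z) < 0$, and symmetrically $\Im z < 0$ forces $\Im r(z) > 0$.

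\textbf{Conclusion.} For $\Im z_j > 0$, every accumulation point $w$ has $\Im w \le 0$. Of the two candidates, $e^{ik_0}$ has strictly positive imaginary part because $k_0 \in (0,\pi)$, so it is excluded. Hence $w = e^{-ik_0}$ is the only accumulation point. A bounded sequence with a single accumulation point converges to it, so the limit exists and equals $e^{-ik_0} = r_+(a)$. The same argument for $\Im z < 0$ gives the limit $e^{ik_0} = r_-(a)$.

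The only delicate point is the last step: we need $k_0 \in (0,\pi)$ strictly so that the two candidate limits lie strictly on opposite sides of the real axis. This holds exactly because $a \ne \pm 2$, which is why the endpoints are excluded from the statement.
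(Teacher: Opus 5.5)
Your proof is correct. Its decisive step is the same as the paper's: the identity $\Im J(w) = (1-|w|^{-2})\,\Im w$ (your $(\rho-\rho^{-1})\sin\theta$), which forces $r$ to map $\mathbb{C}^{+}$ into the lower half-disc and $\mathbb{C}^{-}$ into the upper one.

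Where you differ is in proving that the one-sided limit exists. The paper appeals to the Riemann surface of $J^{-1}$. Since the only branch points are $\pm 2$, there is an analytic inverse of $J$ on a simply connected open set containing $\mathbb{C}^{+}\cup(-2,2)$, and it must agree with $r$ on $\mathbb{C}^{+}$. So $r$ continues analytically across the cut and the limit exists; the sign identity then selects $e^{-ik}$.

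You replace this with a compactness argument in $\overline{\mathbb{D}}$. Accumulation points of $r(z_j)$ are nonzero because $J$ blows up at $0$, they solve $J(w)=a$, and they lie in the closed lower half-disc. Only $e^{-ik_0}$ survives, so the bounded sequence converges.

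Your version is more elementary and self-contained. It uses only that $r$ is a right inverse of $J$ with values in $\mathbb{D}\setminus\{0\}$, with no appeal to analyticity or branch theory, which the paper invokes rather informally. It also isolates exactly where $a\neq\pm 2$ enters: the two candidates $e^{\pm ik_0}$ must lie strictly on opposite sides of the real axis.

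The paper's route yields slightly more, namely that the boundary extension is analytic across $(-2,2)$. The lemma as stated, however, needs only the existence and value of the limit.
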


\begin{proof}
The branch points of the Riemann surface of \( J \) are \( -2 \) and \( 2 \), as they correspond to the branch points of the square root in \eqref{Inv}. On every simply connected region excluding these branch points, one can define two analytic inverse functions of \( J \) (since the Riemann surface is two-sheeted). In particular, there exist two analytic inverse functions of \( J \) defined on an open set containing the upper complex plane and the interval \( (-2, 2) \). One of these must coincide with \( r \) in the upper complex plane. Therefore, the limit  
\begin{align}\label{limm}  
\lim_{\zeta \to E, \, \Im \zeta > 0} r(\zeta) := a  
\end{align}  
exists. Moreover, for every \( \zeta \in \mathbb{C} \) with \( \Im \zeta > 0 \), we have \( \Im r(\zeta) < 0 \). This follows from the identity (recalling that \( J(r(\zeta)) = \zeta \))  
\[  
0 < \Im \zeta = \Im \left( r(\zeta) + r(\zeta)^{-1} \right) = \left( 1 - \frac{1}{|r(\zeta)|^2} \right) \Im r(\zeta)  
\]  
and the fact that \( 0 < |r(\zeta)| < 1 \). Since the limit in \eqref{limm} must satisfy \( J(a) = E \), there exists \( k \in (0, \pi) \) such that \( a = e^{ik} \) or \( a = e^{-ik} \). The above argument implies \( \Im(a) < 0 \) (using \( E \notin \{-2, 2\} \)), so \( a = e^{-ik} \). The statement for \( r_- \) is proved similarly.

\end{proof}

\subsection{Resolvent operator and Green functions boundary values}\label{secres}
\begin{defi}\label{PRdefi}
For every $E \in \rho(H) := \mathbb{C} \setminus \sigma(H)$, we define the  resolvent by
\begin{align}\label{perturbedresolvent}
R_{H}(E) := (H - E)^{-1}.
\end{align}
The matrix elements of this operator are given by
\begin{align}\label{me}
[R_{H}(E)]_{s, r} := \pi_s R_{H}(E)\pi_r^{*}.
\end{align}
\end{defi}

By Definition \ref{DKsr} and Remark \ref{RKsr}, it follows that $\{[R_{H}(E)]_{s, r}\}_{r, s \in \mathbb{Z}}$ is the integral kernel of $R_{H}(E)$, i.e.,
\begin{equation}\label{integraloperator}
(R_{H}(E) u)(s) = \sum_{r \in \mathbb{Z}} [R_{H}(E)]_{s,r} u(r), \quad E \in \rho(H).
\end{equation}
Moreover, we observe that $(H - E) R_{H}(E) \pi_r^{*} = \pi_r^{*}$, and therefore the following difference equation holds:
\begin{equation}
\label{eq:kernel}
[R_{H}(E)]_{s+1,r} + [R_{H}(E)]_{s-1,r} + (V(s) - E) [R_{H}(E)]_{s,r} = \pi_s \pi_r^{*}.
\end{equation}

We now proceed to derive an explicit expression for the resolvent kernel. First, we prove the following lemma:
\begin{lema}\label{aaaa}
    For every $E \in \mathbb{C}\setminus \sigma(H)$, where $E = J(z)$, the matrix
    \begin{align}\label{aaa}
        \Phi(u_+^z, u_-^{z^{-1}})(r) = 
        \begin{pmatrix}
            u_+^z(r+1) & u_-^{z^{-1}}(r+1) \\
            u_+^z(r) & u_-^{z^{-1}}(r)
        \end{pmatrix}
    \end{align}
    is invertible, and its inverse is given by:
    \begin{align}
        \Phi(u_+^z, u_-^{z^{-1}})(r)^{-1} = 
       i \begin{pmatrix}
            -W(u_-^{\overline{z}^{-1}}, u_+^z)^{-1} u_-^{\overline{z}^{-1}}(r)^* 
            & W(u_-^{\overline{z}^{-1}}, u_+^z)^{-1} u_-^{\overline{z}^{-1}}(r+1)^* \\
            -W(u_+^{\overline{z}}, u_-^{z^{-1}})^{-1} u_+^{\overline{z}}(r)^* 
            & W(u_+^{\overline{z}}, u_-^{z^{-1}})^{-1} u_+^{\overline{z}}(r+1)^*
        \end{pmatrix}.
    \end{align}
\end{lema}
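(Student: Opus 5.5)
The plan is to verify directly that the proposed matrix, call it $\Psi(r)$, is a left inverse of $\Phi(u_+^z,u_-^{z^{-1}})(r)$. Since both are square $2L\times 2L$ matrices, a left inverse is automatically a two-sided inverse. The essential tool is identity \eqref{PhiW}. We apply it with $u=u_+^z$ and $w=u_-^{z^{-1}}$, which solve $\tau u=Eu$, and with $\hat u=u_+^{\overline z}$ and $\hat w=u_-^{\overline z^{-1}}$. The latter solve $\tau u=\overline E u$, because $J(\overline z)=\overline{J(z)}=\overline E$ and the Jost solutions are characterized by their asymptotics. We take $z$ in the domain where $|z|\le 1$, so that $\overline z$ lies in the same domain. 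Then \eqref{PhiW} gives
\begin{align*}
\mathcal{J}\Phi(\hat u,\hat w)(r)^{*}\mathcal{J}^{*}\,\Phi(u,w)(r)=\frac1i\begin{pmatrix} -W(\hat w,u) & -W(\hat w,w)\\ W(\hat u,u) & W(\hat u,w)\end{pmatrix}.
\end{align*}

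The next step is to identify the entries of this block matrix. The diagonal-type entries $W(\hat w,w)=W(u_-^{\overline z^{-1}},u_-^{z^{-1}})$ and $W(\hat u,u)=W(u_+^{\overline z},u_+^z)$ vanish. For $|z|=1$ this is Proposition \ref{y5}. For $|z|<1$ the same limiting argument works: the Wronskian is independent of $n$ by Remark \ref{ND}, and letting $n\to\pm\infty$ gives
\[
i\big(z^{n+1}z^{n}-z^{n}z^{n+1}\big)(I+o(1))\to 0,
\]
since the conjugate of $\overline z^{\,n}$ is $z^{n}$. It follows that the right-hand side is block diagonal, equal to $\frac1i\,\mathrm{diag}\big(-W(u_-^{\overline z^{-1}},u_+^z),\,W(u_+^{\overline z},u_-^{z^{-1}})\big)$. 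By Theorem \ref{Winvertible}, since $E\notin\sigma(H)$ (in particular $E\notin\sigma_p(H)\cup\{-2,2\}$), both diagonal blocks are invertible. Multiplying on the left by $\mathrm{diag}\big(-iW(u_-^{\overline z^{-1}},u_+^z)^{-1},\,iW(u_+^{\overline z},u_-^{z^{-1}})^{-1}\big)$ therefore produces a left inverse of $\Phi(u,w)(r)$.

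It remains to check that this left inverse coincides with the stated formula. The explicit form of $\mathcal{J}\Phi(\hat u,\hat w)(r)^*\mathcal{J}^*$, computed in the proof of \eqref{PhiW}, is
\[
\begin{pmatrix}\hat w(r)^* & -\hat w(r+1)^*\\ -\hat u(r)^* & \hat u(r+1)^*\end{pmatrix}.
\]
Multiplying the diagonal factor into this matrix gives the first row
\[
-iW(u_-^{\overline z^{-1}},u_+^z)^{-1}\big(u_-^{\overline z^{-1}}(r)^*,\;-u_-^{\overline z^{-1}}(r+1)^*\big)
\]
and the second row
\[
iW(u_+^{\overline z},u_-^{z^{-1}})^{-1}\big(-u_+^{\overline z}(r)^*,\;u_+^{\overline z}(r+1)^*\big).
\]
These rows agree exactly with the claimed expression for $\Phi(u_+^z,u_-^{z^{-1}})(r)^{-1}$. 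Invertibility of $\Phi(u_+^z,u_-^{z^{-1}})(r)$ then follows because it is square and has a left inverse.

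The main obstacle is the vanishing of the two "self" Wronskians off the unit circle, together with the precise bookkeeping of conjugates, $\overline z$ versus $z$, on the two sides. The Wronskian must pair a solution for $E$ with a solution for $\overline E$, and only then is it $n$-independent, so that the limit can be taken. One must therefore confirm that $u_\pm^{\overline z^{\pm1}}$ are indeed the $\overline E$-Jost solutions, which follows from the uniqueness implicit in the Volterra construction of Lemma \ref{jostvolterra}. One must also confirm that, when $|z|<1$, the product of the $o(1)$ errors with the growth or decay factors $z^{2n}$ still tends to $0$. In the $+$ case, with $n\to+\infty$, this holds because $z^{2n}\to 0$. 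In the $-$ case, with $n\to-\infty$, the relevant factor is $z^{-2n}\to 0$, which is also fine.
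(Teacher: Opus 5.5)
Your proposal is correct and follows the paper's proof essentially step by step: apply \eqref{PhiW} with $u_+^z, u_-^{z^{-1}}$ and $u_+^{\overline z}, u_-^{\overline z^{-1}}$, use the vanishing of the two "self" Wronskians to make the product block diagonal, invert the diagonal blocks via Theorem \ref{Winvertible}, and multiply out $\mathcal{J}\Phi(u_+^{\overline z},u_-^{\overline z^{-1}})(r)^*\mathcal{J}^*$. Your explicit limiting argument for $W(u_+^{\overline z},u_+^z)=0$ and $W(u_-^{\overline z^{-1}},u_-^{z^{-1}})=0$ when $|z|<1$ is a useful addition, because the paper only cites Proposition \ref{y5}, which is stated for $z\in\mathbb{S}^1$; and $|z|<1$ is in fact the only case that occurs here, since $E\notin[-2,2]$.
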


\begin{proof}
    Using \eqref{PhiW} with $u_+^z$ and $u_-^{z^{-1}}$ (solutions of $\tau u = Eu $) and $u_+^{\overline{z}}$ and $u_-^{\overline{z}^{-1}}$ (solutions of $\tau u = \overline{E}u $), we obtain:
    \begin{align}\label{wronskian_identity}
        \mathcal{J}\Phi(u_+^{\overline{z}}, u_-^{\overline{z}^{-1}})(r)^*\mathcal{J}^*\Phi(u_+^z, u_-^{z^{-1}})(r) 
        &\notag = \frac{1}{i}
        \begin{pmatrix}
            -W(u_-^{\overline{z}^{-1}}, u_+^z)(r) & -W(u_-^{\overline{z}^{-1}}, u_-^{z^{-1}})(r) \\
            W(u_+^{\overline{z}}, u_+^z)(r) & W(u_+^{\overline{z}}, u_-^{z^{-1}})(r)
        \end{pmatrix} \\
        & = \frac{1}{i}
        \begin{pmatrix}
            -W(u_-^{\overline{z}^{-1}}, u_+^z) & 0 \\
            0 & W(u_+^{\overline{z}}, u_-^{z^{-1}})
        \end{pmatrix},
    \end{align}
    where the off-diagonal terms vanish due to Proposition \ref{y5}, which implies $W(u_-^{\overline{z}^{-1}}, u_-^{z^{-1}}) = 0$ and $W(u_+^{\overline{z}}, u_+^z) = 0$. Since the diagonal entries of the matrix on the right-hand side of \eqref{wronskian_identity} are invertible for Theorem \ref{Winvertible}, it follows that:
    \begin{align}
        \Phi(u_+^z, u_-^{z^{-1}})(r)^{-1} &\notag = i
        \begin{pmatrix}
            -W(u_-^{\overline{z}^{-1}}, u_+^z)^{-1} & 0 \\
            0 & W(u_+^{\overline{z}}, u_-^{z^{-1}})^{-1}
        \end{pmatrix}
        \mathcal{J}\Phi(u_+^{\overline{z}}, u_-^{\overline{z}^{-1}})(r)^*\mathcal{J}^* \\
        & = i
        \begin{pmatrix}
            -W(u_-^{\overline{z}^{-1}}, u_+^z)^{-1} u_-^{\overline{z}^{-1}}(r)^* 
            & W(u_-^{\overline{z}^{-1}}, u_+^z)^{-1} u_-^{\overline{z}^{-1}}(r +1 )^* \\
            -W(u_+^{\overline{z}}, u_-^{z^{-1}})^{-1} u_+^{\overline{z}}(r)^* 
            & W(u_+^{\overline{z}}, u_-^{z^{-1}})^{-1} u_+^{\overline{z}}(r+1 )^*
        \end{pmatrix}.
    \end{align}
    The last equality follows from a direct computation using the definition of $\Phi(u, v)$ and the identity \eqref{irule}.
\end{proof}

In the results that follow, the matrix multiplication operator $L_A \in B(\mathcal{M}_L)$, defined by $L_A(B) = AB$ for all $B \in \mathcal{M}_L$, will be denoted simply by $A$ when no confusion arises.
\color{black}
\begin{teo}\label{formulaGreenmatrixelements}
    For  $E\in \mathbb{C}\setminus \sigma (H)$, the kernel of $R_{H}$ is given by
    \begin{equation}
    [R_{H}(E)]_{s,r}=
    \label{eq:Green}
 \left\{ \begin{array}{lcc}
               -iu_{+}^{z}(s) W(u_{-}^{\overline{z}^{-1}},u_{+}^{z},)^{-1} u_{-}^{\overline{z}^{-1}}(r)^{*} &   s\geq r \\
            iu_{-}^{z^{-1}}(s) W(u_{+}^{\overline{z}}, u_{-}^{z^{-1}})^{-1} u_{+}^{\overline{z}}(r)^{*} &  s< r
             \end{array}
   \right.
   \end{equation}
   for $J(z)=E$ where $z\in \mathbb{D}\setminus \{0\}$. 
   
   The matrix elements, $[R_{H}(E)]_{s,r}$, are called Green functions.
\end{teo}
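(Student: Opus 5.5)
The plan is to build a candidate kernel $G(s,r)$ from the right-hand side of \eqref{eq:Green} and show two things. First, for each fixed $r$ and each $A\in\mathcal{M}_L$, the sequence $s\mapsto G(s,r)A$ belongs to $\ell^2$. Second, it solves the inhomogeneous difference equation \eqref{eq:kernel}. Then $(H-E)\,G(\cdot,r)A=\pi_r^*A$. Since $H-E$ is injective on $\ell^2$, this forces $G(\cdot,r)A=R_H(E)\pi_r^*A$, i.e. $G(s,r)=[R_H(E)]_{s,r}$. Throughout, $z\in\mathbb{D}\setminus\{0\}$, so $E=J(z)\notin[-2,2]$; the Wronskians are invertible by Theorem \ref{Winvertible}, and Lemma \ref{aaaa} is available.

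\textbf{Square summability.} Fix $r$. For $s\ge r$, $G(s,r)=u_+^z(s)C_r$ with $C_r$ a constant matrix. Since $u_+^z(s)=z^s(I+o(1))$ as $s\to+\infty$ and $|z|<1$, the tail $s\to+\infty$ is in $\ell^2$. For $s<r$, $G(s,r)=u_-^{z^{-1}}(s)D_r$, and $u_-^{z^{-1}}(s)=z^{-s}(I+o(1))$ as $s\to-\infty$ is geometrically decaying. Only finitely many $s$ lie in between, so $G(\cdot,r)A\in\ell^2$.

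\textbf{Difference equation.} For $s\ge r+1$, both $G(s\pm1,r)$ and $G(s,r)$ use the $u_+^z$ branch. Because $u_+^z$ solves $\tau u=Eu$ and right multiplication by the constant matrix $C_r$ preserves solutions, the equation holds with right-hand side $0$. The case $s\le r-1$ is the same with $u_-^{z^{-1}}$. The only nontrivial case is $s=r$, where one needs
\[
G(r+1,r)+G(r-1,r)+(V(r)-E)G(r,r)=I.
\]
Here $G(r+1,r)$ and $G(r,r)$ come from the $u_+$ branch and $G(r-1,r)$ from the $u_-$ branch. Using $u_+^z(r+1)+u_+^z(r-1)+(V(r)-E)u_+^z(r)=0$, the left side reduces to the jump condition
\[
-u_+^z(r-1)C_r+u_-^{z^{-1}}(r-1)D_r=I.
\]
Equivalently, one checks that $G(\cdot,r)$ agrees with the $u_-$ formula at $s=r$, i.e. $u_+^z(r)C_r=u_-^{z^{-1}}(r)D_r$, and that the discrete derivatives jump by exactly $I$.

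\textbf{The main obstacle: the jump relations.} This is where non-commutativity matters, and I will take it from Lemma \ref{aaaa} rather than manipulating Wronskians by hand. Multiply $\Phi(u_+^z,u_-^{z^{-1}})(r)$ on the right by the explicit inverse in Lemma \ref{aaaa} and read off the identity $\Phi\,\Phi^{-1}=I_{2L}$ blockwise. The relevant blocks give
\begin{align*}
-i\,u_+^z(r)W_1^{-1}u_-^{\overline z^{-1}}(r)^*-i\,u_-^{z^{-1}}(r)W_2^{-1}u_+^{\overline z}(r)^*&=0,\\
-i\,u_+^z(r+1)W_1^{-1}u_-^{\overline z^{-1}}(r)^*-i\,u_-^{z^{-1}}(r+1)W_2^{-1}u_+^{\overline z}(r)^*&=I,
\end{align*}
where $W_1=W(u_-^{\overline z^{-1}},u_+^z)$ and $W_2=W(u_+^{\overline z},u_-^{z^{-1}})$.

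The first identity gives continuity at $s=r$: $G(r,r)$ computed with either branch agrees. The second identity says $G(r+1,r)$ (first branch) minus $u_-^{z^{-1}}(r+1)D_r$ equals $I$. Combining these with the fact that $s\mapsto u_-^{z^{-1}}(s)D_r$ solves the homogeneous equation at $s=r$, the inhomogeneous equation at $s=r$ holds with right-hand side $I$. Concretely, substitute $G(r-1,r)=u_-^{z^{-1}}(r-1)D_r$ and $G(r,r)=u_-^{z^{-1}}(r)D_r$, so that everything cancels except $G(r+1,r)-u_-^{z^{-1}}(r+1)D_r=I$.

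Care is needed to match the conjugations: $\overline{z}$ appears in the adjoint factors because those are solutions for $\overline E$. Consistency with Remark \ref{ND} should be checked, but the block identities from Lemma \ref{aaaa} carry all the needed algebra. Injectivity of $H-E$ for $E\notin\sigma(H)$ then concludes the proof.
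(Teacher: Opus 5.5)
Your argument is correct, but it runs in the opposite direction from the paper's. The paper \emph{derives} the formula from the existing resolvent. For fixed $r$, it notes that the column $s\mapsto[R_H(E)]_{s,r}C$ solves the homogeneous equation for $s>r$, and extends it to a global solution via the Cauchy problem. It expands that solution as $u_+^zA^z+v_+^{z^{-1}}D^z$ using Proposition \ref{base}, kills $D^z$ by square summability, and does the same on the left with $u_-^{z^{-1}}B^z$. Finally it solves $\Phi(u_+^z,u_-^{z^{-1}})(r)\,(A^z,-B^z)^{T}=(C,0)^{T}$ by applying the inverse from Lemma \ref{aaaa} on the left.

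You instead \emph{verify} a candidate. You check $\ell^2$ membership directly from the Jost asymptotics with $|z|<1$, read the continuity and jump relations off the blocks of $\Phi\,\Phi^{-1}=I_{2L}$, and conclude by injectivity of $H-E$.

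The two proofs are dual in where existence and uniqueness come from. The paper gets existence for free from $R_H(E)$, and uniqueness from the left-inverse identity that the Wronskian computation in Lemma \ref{aaaa} literally produces. You need the right-inverse identity, which is fine because a left inverse of a square $2L\times 2L$ matrix is two-sided; say so explicitly. Your uniqueness comes from $E\in\rho(H)$.

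Your route is shorter and avoids, at this step, the column-structure analysis and the divergent solutions $v_+^{z^{-1}}$. Theorem \ref{Winvertible}, on which Lemma \ref{aaaa} rests, still uses them. The paper's route has the advantage of showing why the kernel must take this form.

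One bookkeeping remark: the equation at $s=r-1$ involves $G(r,r)$, which by definition lies on the $u_+^z$ branch. So that case is not ``the same'' as $s\le r-2$; it needs the continuity identity $u_+^z(r)C_r=u_-^{z^{-1}}(r)D_r$. You do establish that identity, so only the order of presentation needs adjusting.
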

\begin{proof}
   Regarding equation \eqref{eq:kernel}, the following holds for all $C\in \mathcal{M}_L$:
\begin{equation}
     \label{eq:kernel2}
     [R_{H}(E)]_{ s+1,r}C + [R_{H}(E)]_{ s-1,r}C + (V(s) - E)[R_{H}(E)]_{s,r}C = \delta_{r,s}  C.
\end{equation}
In particular, for \(s = r\), we obtain:
\begin{equation}\label{deq}
     [R_{H}(E)]_{r+1,r}C + [R_{H}(E)]_{r-1,r}C + (V(r) - E)[R_{H}(E)]_{r,r}C = C.
\end{equation}

On the other hand, for a fixed \(r \in \mathbb{Z}\), the sequence \( s \mapsto [R_{H}(E)]_{s,r}C\), for $s>r$, satisfies the equation
\begin{align}
[R_{H}(E)]_{ s+1,r}C + [R_{H}(E)]_{ s-1,r}C + (V(s) - E)[R_{H}(E)]_{s,r}C=0 . 
\end{align}
The solution to the Cauchy problem 
\begin{align}
    \tau w = E w ,   \hspace{1cm}  w(r+1) = [R_{H}(E)]_{r+1,r}C,   \hspace{.3cm}
     w(r+2) = [R_{H}(E)]_{r+2,r}C
     \end{align}
is unique and it is given in Theorem \ref{cauhyproblem}. Then, $w$ is a unique extension of 
   \( \Big (  [R_{H}(E)]_{s,r}C \Big )_ {s>r}\),  to $ \mathbb{Z} $. 
 Recall that by Proposition \ref{base}, there exist matrices \(A^{z}, B^{z} \in \mathcal{M}_{L}\) such that:
\begin{align}
    w(s) = u_{+}^{z}(s) A^{z} + v_{+}^{z^{-1}}(s) D^{z}.
\end{align}
Since the sequence \( s \mapsto [R_{H}(E)]_{s,r}C\) is square summable (see Remark \ref{RKsr})  and \(v_{+}^{z^{-1}}(s) = z^{-s}(I + o(1))\) as \(n \to \infty\), we have that:
 \begin{align}\label{nj}
  w(s) =u_{+}^{z}(s) A^{z}, \:  \: \forall s,  \hspace{.5cm}   [R_{H}(E)]_{s,r}C =u_{+}^{z}(s) A^{z},\,\,\, s\geq r+1  
 \end{align}
($A^z$ and $D^z$ depend on $r$, but for simplicity in the proof, we will omit it).

Substituting \(s = r+1\) in \eqref{eq:kernel2}, we obtain:
\begin{align}\label{g}
     [R_{H}(E)]_{r+2,r} + [R_{H}(E)]_{r,r} + (V(r+1) - E)[R_{H}(E)]_{ r+1,r} = 0
\end{align}
  and this equation is equivalent to 
  \begin{align}\label{g1}
      [R_{H}(E)]_{r,r} =-  [R_{H}(E)]_{r+2,r}-(V(r+1) - E)[R_{H}(E)]_{r+1,r}.
\end{align}
Now we use the fact that $w$ satisfies the equation  \eqref{nj}, that  $\tau w=Ew$, that $  w $ is an extension   of  \( \Big (  [R_{H}(E)]_{s,r}C \Big )_ {s>r}\) and  \eqref{g1} to obtain 
    \begin{align}\label{nj2}
u_{+}^{z}(r) A^{z} = w(r) = - w(r+2) -(V(r+1) - E ) w(r+1) =      [R_{H}(E)]_{r,r}C .
\end{align}
From equations \eqref{nj} and \eqref{nj2}, we conclude:
\begin{align} \label{nj3}
     [R_{H}(E)]_{s,r}C = u_{+}^{z}(s) A^{z}, \quad \text{for } s \geq r.
\end{align}
A similar analysis yields there there is a matrix $B^z$ such that 
\begin{align} \label{nj4}
     [R_{H}(E)]_{s,r}C= u_{-}^{z^{-1}}(s) B^{z}, \quad \text{for } s \leq r.
\end{align}

Now, substituting \eqref{nj3} and \eqref{nj4} into \eqref{deq} for \(s = r\), we get:
\begin{align}\label{nj5}
     u_{+}^{z}(r+1) A^{z} + u_{-}^{z^{-1}}(r-1) B^{z} + (V(r) - E) u_{-}^{z^{-1}}(r) B^{z} = C.
\end{align}
Since \(u_{-}^{z^{-1}}\) satisfies \(\tau u = Eu \), equation \eqref{nj5} simplifies to:
\begin{align}\label{s1}
     u_{+}^{z}(r+1) A^{z} - u_{-}^{z^{-1}}(r+1) B^{z} = C.
\end{align}
Additionally, from \eqref{nj3} and \eqref{nj4} evaluated at \(s = r\), we have:
\begin{align}\label{s2}
     u_{+}^{z}(r) A^{z} = u_{-}^{z^{-1}}(r) B^{z}.
\end{align}
Finally, from \eqref{s1} and \eqref{s2}, we obtain the following system of equations:
\begin{align}
     u_{+}^{z}(r+1) A^{z} - u_{-}^{z^{-1}}(r+1) B^{z} &= C, \\
     u_{+}^{z}(r) A^{z} - u_{-}^{z^{-1}}(r) B^{z} &= 0 .
\end{align}
This can be writen in the next form: 
\begin{align}
    \Phi ( u_{+}^{z}, u_{-}^{z^{-1}})(r)
    \begin{pmatrix}
        A^{z}\\
        -B^{z}
         \end{pmatrix}=
          \begin{pmatrix}
        C\\
        0
         \end{pmatrix},
\end{align}
where we use \eqref{aaa}.  Now we recall  Lemma \ref{aaaa} to obtain
\begin{align}
    \begin{pmatrix}
        A^{z}\\
       - B^{z}
         \end{pmatrix}= i\begin{pmatrix}
           -W(u_- ^{\overline{z}^{-1}}, u_+^{z})^{-1}u_- ^{\overline{z}^{-1}}(r)^{*} & W(u_- ^{\overline{z}^{-1}}, u_+^{z})^{-1}u_- ^{\overline{z}^{-1}}(r+1)^{*} \\
           -W(u_+ ^{\overline{z}}, u_-^{z^{-1}})^{-1}u_+ ^{\overline{z}}(r)^{*} & W(u_+ ^{\overline{z}}, u_-^{z^{-1}})^{-1}u_+ ^{\overline{z}}(r+1)^{*}
        \end{pmatrix} \begin{pmatrix}
        C\\
        0
         \end{pmatrix}.
\end{align}

Finally, we use  \eqref{nj3} and \eqref{nj4} to get
 \begin{equation}
    [R_{H}(E)]_{s,r}C=
 \left\{ \begin{array}{lcc}
             -iu_{+}^{z}(s) W(u_{-}^{\overline{z}^{-1}},u_{+}^{z},)^{-1} u_{-}^{\overline{z}^{-1}}(r)^{*}C &   s\geq r \\
            iu_{-}^{z^{-1}}(s) W(u_{+}^{\overline{z}}, u_{-}^{z^{-1}})^{-1} u_{+}^{\overline{z}}(r)^{*}C &  s< r
             \end{array}
   \right.
   \end{equation}  for all $C\in \mathcal{M}_L.$
\end{proof}
\begin{obs}
    From \eqref{eq:Green} ,  \eqref{symw},  we verify that  for all $s,r\in \mathbb{Z}$:
    \begin{align}
        [R_{H}(E)]_{r,s}=[R_{H}(\overline{E})]_{s,r}^{*},
    \end{align}
    for $E=J(z)$, where $z \in \mathbb{D}\setminus \{0, 1, -1\}$, see also below \eqref{KER}. 
\end{obs}

\begin{teo}\label{Green2.1}
    For all $r,s\in \mathbb{Z}$ and for $E\in (-2,2)$, the function $ [R_{H}(E)]_{r,s}$ has boundary limits  and they are given by
\begin{align}\label{eq:Green1.1}
        [R_{H}(E\pm i0)]_{s,r}=-iu_{+}^{z^{\pm 1}}(s)(W(u_{-}^{z^{\pm 1}},  u_{+}^{z^{\pm 1}} ))^{-1} u_{-}^{z^{\pm 1}}(r)^{*}, \,\,\, \text{if}\,\,\, s\geq r,
    \end{align} 
 \begin{align}\label{eq:Green2.1}
        [R_{H}(E\pm  i0)]_{s,r}=iu_{-}^{z^{\mp 1}}(s)W(u_+^{z^{\mp 1}}, u_-^{z^{\mp 1}})^{-1}u_{+}^{z^{\mp 1}}(r)^{*},\,\,\, \text{if}\,\,\, s< r,
    \end{align}
 where $z=r^+(E)\in \mathbb{S}^{1}.$
\end{teo}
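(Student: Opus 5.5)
The plan is to pass to the limit $\epsilon \to 0^{\pm}$ in the explicit formula of Theorem \ref{formulaGreenmatrixelements}, with $r$ and $s$ held fixed. The argument has four steps.

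\textbf{Step 1: the spectral parameter.} Fix $E\in(-2,2)$ and set $E_\epsilon = E\pm i\epsilon$ for small $\epsilon>0$. Write $z_\epsilon = r(E_\epsilon)\in\mathbb{D}\setminus\{0\}$, so that $J(z_\epsilon)=E_\epsilon$. For $\epsilon$ small, $E_\epsilon\notin\sigma(H)$, because $\sigma_p(H)$ is finite and disjoint from $[-2,2]$. By Lemma \ref{rpm}, $z_\epsilon \to r_\pm(E)$ as $\epsilon\to 0^{\pm}$. With $z=r_+(E)=e^{-ik}$ and $k\in(0,\pi)$, we have $r_-(E)=e^{ik}=\overline z = z^{-1}$. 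Hence $z_\epsilon\to z^{\pm1}$ according to the sign, and $z^{\pm1}\in\mathbb{S}^1\setminus\{-1,1\}$.

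\textbf{Step 2: limits of the ingredients.} The conjugate points $\overline{z_\epsilon}$ converge to $\overline{z^{\pm1}}=z^{\mp1}$, and $\overline{z_\epsilon}^{-1}$ converges to $z^{\pm1}$. Lemma \ref{jostvolterra} (or Lemma \ref{voltinK}) says that $\zeta\mapsto u_\pm^{\zeta^{\pm1}}(n)$ is continuous on $\overline{\mathbb D}\setminus\{0\}$ for each fixed $n$. Apply this at the finitely many indices $n\in\{s,s+1,r,r+1\}$. Every Jost solution factor, every adjoint factor, and every Wronskian appearing in \eqref{eq:Green} therefore converges; the Wronskian converges because it is a finite combination of values at $n$ and $n+1$. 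For $s\ge r$, the factor $u_+^{z_\epsilon}(s)$ tends to $u_+^{z^{\pm1}}(s)$. The factor $u_-^{\overline{z_\epsilon}^{-1}}(r)$ is parametrized by $\overline{z_\epsilon}\in\mathbb D$, which tends to $z^{\mp1}$; so this factor tends to the Jost solution on the unit circle with asymptotics $(z^{\pm1})^{n}$ at $-\infty$. That is exactly the object denoted $u_-^{z^{\pm1}}$ in \eqref{eq:Green1.1}. The case $s<r$ is handled the same way: $u_-^{z_\epsilon^{-1}}(s)\to u_-^{z^{\mp1}}(s)$ and $u_+^{\overline{z_\epsilon}}(r)\to u_+^{z^{\mp1}}(r)$, which gives \eqref{eq:Green2.1}.

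\textbf{Step 3: convergence of the inverse Wronskian.} Matrix inversion is continuous on invertible matrices, so it remains to show that the limiting Wronskians are invertible. This is the main obstacle. Theorem \ref{Winvertible} was argued in detail only for $|z|<1$, with the boundary case deferred to \eqref{MN}. On $\mathbb{S}^1\setminus\{\pm1\}$ one has
\[
W(u_-^{z^{\pm1}},u_+^{z^{\pm1}})=(\nu^{z^{\pm1}})^{-1}M_+^{z^{\pm1}}.
\]
This follows from \eqref{MN}, after checking that the labels match via $u_-^{z}$ versus $u_-^{1/\overline z}$ on the circle. Here $\nu^{z}\neq0$ and $M_+$ is invertible by Proposition 16 of \cite{MR4385984}. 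The second Wronskian is handled by the analogous identity with $M_-$, or through the symmetry \eqref{symw} together with \eqref{Tproperty}. Once invertibility of the limit is established, $W(\cdot,\cdot)^{-1}$ converges to the inverse of the limit.

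\textbf{Step 4: conclusion.} Multiplying the three convergent factors gives convergence of $[R_H(E_\epsilon)]_{s,r}$ to the stated expressions, and this proves the theorem. Beyond Step 3, care is also needed in the bookkeeping of which superscript ($z$, $z^{-1}$, $\overline z$) each limit corresponds to.
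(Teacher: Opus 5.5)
Your proposal is correct and is essentially the paper's argument. Both pass to the limit in \eqref{eq:Green} along $z_\epsilon=r(E\pm i\epsilon)\to z^{\pm1}$, using Lemma \ref{rpm} and the continuity of the Jost solutions (Lemma \ref{jostvolterra}), and both obtain invertibility of the limiting Wronskians from \eqref{MN} and the invertibility of $M^{z}_{\pm}$ on $\mathbb{S}^1\setminus\{-1,1\}$. Your superscript bookkeeping (e.g.\ $\overline{z_\epsilon}^{-1}\to z^{\pm1}$ in the $u_-$ factor) is accurate; the last factor in the paper's display \eqref{eq:greenlimit2} should read $u_-^{z^{-1}}(r)^*$, as your version gives.
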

\begin{proof}
Fist we notice that the Wronskians \(W(u_{+}^{z}, u_{-}^{1/\overline{z}})\) and \(W(u_{-}^{1/z},  u_{+}^{\overline{z}})\) are invertible whenever $J(z)  \in (2, 2)$, this is consequence of \eqref{MN} and the lines below it. For the points $z$ such that
$ J(z) $ does not belong to  $  [-2, 2] $, these Wronskians are invertible  on the points $ z $ such that $J(z)$ is not a spectral point (an eigenvalue) of  $H$ (see Theorem \ref{Winvertible}). We take $ E \in (-2, 2) $  and denote by $ z_\epsilon = r(E + i \epsilon)   $,  for $ \epsilon > 0$,  see the text above \eqref{r_-} and $z = r_+(E)$. 
We use Lemma \ref{rpm},  \eqref{eq:Green}, the continuity of Jost solutions (Lemma \ref{jostvolterra}) to obtain for $ s \geq r $ (the case $ s< r $ is similar, notice also that $ \overline{z} = z^{-1} $):
    \begin{align}
 \label{eq:greenlimit}
    [R_{H}(E+ i0)]_{s,r}&=\lim _{\varepsilon \to 0^{+}}[R_{H}(E+i\varepsilon )]_{s,r}
    \\& \notag =  \lim_{\epsilon \to 0^{+} }-iu_{+}^{z_{\epsilon}}(s)(W(u_{-}^{\overline{z_
    {\epsilon}}^{-1}},  u_{+}^{z_{\epsilon}} ))^{-1} u_{-}^{\overline{ z_{\epsilon}}^{-1}}(r)^{*}    \\& \notag = -iu_{+}^{z}(s)(W(u_{-}^{z},  u_{+}^{z} ))^{-1} u_{-}^{z}(r)^{*} 
  \end{align}
  and
  \begin{align}
 \label{eq:greenlimit2}
    [R_{H}(E- i0)]_{s,r}&=\lim _{\varepsilon \to 0^{+}}[R_{H}(E-i\varepsilon )]_{s,r}\\&\notag= \lim _{\varepsilon \to 0^{+}}
     -iu_{+}^{z_{\varepsilon }}(s)(W(u_{-}^{\overline{z_{\varepsilon }}^{-1}},u_{+}^{z_{\varepsilon }}))^{-1} u_{-}^{\overline{z_{\varepsilon }}^{-1}}(r)^{*}\\&\notag= 
    -iu_{+}^{z^{-1}}(s)(W(u_{-}^{z^{-1}},u_{+}^{z^{-1}}))^{-1} u_{+}^{z^{-1}}(r)^{*} 
  \end{align}
 where $z=r_+ (E)=r_-(E)^{-1}$ are the functions given by \eqref{r_+} and \eqref{r_-}. 
\end{proof}

\begin{obs} Using  Definition \ref{utilde}  and \eqref{eq:TR1} together with Theorem \ref{Green2.1}, we prove that: 
     \begin{align}\label{eq:Green1}
        [R_{H}(E\pm i0)]_{s,r}=\frac{z^{\pm (s-r)}}{z^{\pm 1}-z^{\mp 1}}\tilde{u}_{+}^{z^{\pm 1}}(s)T_{+}^{z^{\pm 1}}\tilde{u}_{-}^{z^{\pm 1}}(r)^{*}, \,\,\, \text{if}\,\,\, s\geq r,
    \end{align}  
 \begin{align}\label{eq:Green2}
        [R_{H}(E\pm  i0)]_{s,r}=\frac{z^{\pm (r-s)}}{z^{\pm 1}-z^{\mp 1}}\tilde{u}_{-}^{z^{\mp 1}}(s)T_{-}^{z^{\pm 1}}\tilde{u}_{+}^{z^{\mp 1}}(r)^{*},\,\,\, \text{if}\,\,\, s<r . 
    \end{align}

\end{obs}

\begin{obs} For $E\in  (-2,2)$ and  $s,r\in \mathbb{Z}$, by the properties of $T^{z}_\pm $, \eqref{Tproperty}, we have

\begin{equation}
    [R_{H}(E + i0 )]_{s,r}^{*}=[R_{H}(E - i0 )]_{r,s}.
    \label{eq:simetria}
\end{equation}
\end{obs}

 \section{Limiting absorption principle}\label{sectionLAP}


 \begin{lema}\label{LAP1}
  On any compact set \(K \subseteq \overline{\mathbb{D}} \setminus \{0, -1,  1\}\), we have
    \begin{align}\label{i1}
        \sup _{s \in \mathbb{Z}, \, z \in K} \left\{ \| z^{\mp s} u_{\pm }^{z^{\pm 1}}(s) \| \right\} \leq C(K, \| V \|_0),
    \end{align} for some constant  $C(K, \| V \|_0)$. 
\end{lema}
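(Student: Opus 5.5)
We prove the bound for the $+$ sign; the $-$ case is symmetric. The plan is to work with the normalized function $\tilde u_+^z(s)=z^{-s}u_+^z(s)$ of Definition \ref{utilde} and turn the Volterra equation \eqref{jostsolvolterra} of Lemma \ref{voltinK} into a Volterra equation for $\tilde u_+^z$ with a kernel bounded uniformly on $K$. A discrete Gronwall argument then gives the bound. Only $\|V\|_0<\infty$ is used, which is why we avoid the transmutation representation (Theorem \ref{JSR}): that needs $\|V\|_1<\infty$.

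\textbf{Step 1: the normalized Volterra equation.} Multiplying \eqref{jostsolvolterra} by $z^{-s}$ gives
\begin{align}
\tilde u_+^z(s)= I-\sum_{j=s+1}^{\infty} z^{j-s}S^z(j-s)\,V(j)\,\tilde u_+^z(j).
\end{align}
This is exactly the kernel $\mathcal K(s,j)=-z^{j-s}S^z(j-s)V(j)$ mentioned in the proof of Lemma \ref{voltinK}. For $m=j-s\ge 1$ we have $z^mS^z(m)=(z^{2m}-1)/(z-z^{-1})$. Since $|z|\le 1$, this gives
\begin{align}
|z^mS^z(m)|\le \frac{2}{|z-z^{-1}|}.
\end{align}
The function $z\mapsto |z-z^{-1}|$ is continuous on $K$ and vanishes only at $z=\pm1$, which are excluded. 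By compactness it has a positive minimum on $K$, so $|z^mS^z(m)|\le C(K)$ uniformly in $m\ge1$ and $z\in K$. This uniform kernel bound is the main point, and it is precisely where we need $K$ to avoid $\pm1$. At $z=\pm1$ one has $S^{\pm1}(m)\sim m$, which would force the use of $\|V\|_1$.

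\textbf{Step 2: Gronwall.} Set $f(s)=\|\tilde u_+^z(s)\|$. It is finite for each $s$, and it is bounded as $s\to+\infty$ by the asymptotics \eqref{eq:as}. Step 1 gives
\begin{align}
f(s)\le 1+C(K)\sum_{j>s}\|V(j)\|f(j).
\end{align}
We then apply the discrete Gronwall inequality backwards in $s$. Concretely, let $g(s)$ denote the right-hand side, so that $f\le g$. Then $g(s-1)\le g(s)\big(1+C(K)\|V(s)\|\big)$, and $g(s)\to1$ as $s\to\infty$. Iterating,
\begin{align}
f(s)\le \prod_{j>s}\big(1+C(K)\|V(j)\|\big)\le \exp\big(C(K)\|V\|_0\big),
\end{align}
uniformly in $s\in\mathbb Z$ and $z\in K$. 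The case of $u_-^{1/z}$ uses the second equation of \eqref{jostsolvolterra} in the same way, summing over $j<s$.

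\textbf{Step 3: technical points.} The infinite sums must converge and the limit $g(s)\to1$ must hold. Both follow because $\tilde u_+^z$ is the bounded solution given by Theorem \ref{teo:volt}, combined with $\|V\|_0<\infty$. To be safe, we can first run Step 2 on the tail $s\ge N$, where boundedness is guaranteed, and the resulting bound holds for every $N$ with a constant independent of $N$.
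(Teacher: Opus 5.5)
Your proposal is correct and follows essentially the same route as the paper: normalize the Volterra equation by $z^{-s}$, bound the kernel $z^{m}S^{z}(m)$ uniformly by $C(K)$ on $K$ (possible because $K$ avoids $z=\pm 1$), and apply a discrete Gronwall inequality to get the bound $\exp(C(K)\|V\|_0)$. The only difference is that you make two steps explicit that the paper leaves implicit: the kernel bound $|z^{m}S^{z}(m)|=|z^{2m}-1|/|z-z^{-1}|\le 2/|z-z^{-1}|$, and the backward Gronwall iteration, where the paper simply cites Gronwall's lemma from the literature.
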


\begin{proof} We  carry out the proof for only for the plus sign, the other case is analogous.
    In   Lemma \ref{voltinK}, it is proven that the sequence \(u_+^{z}\) satisfies the Volterra equation
    \begin{equation}
        u_{+}^{z}(s) z^{-s} = I - \sum_{r=s+1}^{\infty} z^{r-s} S^{z}(r-s) V(r) z^{-r} u_{+}^{z}(r),
    \end{equation}
  where $S^z$ is the solution given by \eqref{eq:scalar},
which implies
    \begin{equation}
        \| u_{+}^{z}(s) z^{-s} \|\leq \| I \| + \sum_{r=s+1}^{\infty} \| z^{r-s} S^{z}(r-s) V(r) \| \| z^{-r} u_{+}^{z}(r) \|.
    \end{equation}
    Applying Gronwall's Lemma (\cite{MR4760547}, Lemma 25) yields
    \begin{equation}\label{ex}
        \| u_{+}^{z}(s) z^{-s} \| \leq \| I \| \exp\left( \sum_{r=s+1}^{\infty}\| z^{r-s} S^{z}(r-s) V(r) \| \right).
    \end{equation}
Now, for \(r > s\), if \(K \subseteq \overline{\mathbb{D}} \setminus \{0, -1, 1\}\) is compact, $K\ni z\mapsto z^{r-s} S^{z}(s-r)$ is bounded (see \eqref{eq:scalar}), then, there exists a constant \(C(K)\) such that
    \begin{align}\label{ex2}
        \sup_{z \in K} \left\{ \| z^{r-s} S^{z}(r-s) V(r) \|_{\mathcal{M}_L} \right\} \leq C(K) \| V(r) \| .
    \end{align}
From \eqref{ex} and \eqref{ex2}, it follows that for  any \(z \in K\),
    \begin{align}
        \| u_+^{z}(s) z^{-s} \| \leq \| I \|  \exp\left( C(K) \| V \|_0 \right),
    \end{align}
    where $\|\hspace{.3cm}\|_0$ is the norm defined in \eqref{firstfinitemoment}.
\end{proof}
\begin{obs}\label{IQ}
    Let $z_{1}, z_{2}\in \overline{\mathbb{D}}.$ If $n$ is a non-negative integer and $0<\rho $, then 
    \begin{align}
        |z_{1}^{n}-z_{2}^{n}|   = |z_{1}^{n}-z_{2}^{n}|^{1-\min(\rho, 1)} \Big | (z_1 - z_2) \sum_{i = 0}^{n-1} z_1^i  z_2^{n-i-1} \Big |^{\min(\rho,1)} 
        \leq 2|z_1 -z_2|^{\min(\rho, 1) }|n|^{\min(\rho, 1) }.
    \end{align}
\end{obs}
\begin{lema}\label{y10}
  Suppose that $\rho > 0 $ and $ \|  V\|_{\rho} < \infty $.  
  Let $K \subset \overline{ \mathbb{D}} \setminus  \{0, -1,  1\} $ be a compact set. 
  Then for  $z \in K$, the following estimate holds true:
    \begin{align}\label{z}
        \sup_{s \in \mathbb{Z}} \left\| (1+|s|)^{-\rho } \left( z^{\mp s} u_{\pm}^{z^{\pm 1}}(s) - z_{0}^{\mp s} u_{\pm}^{z_{0}^{\pm 1}}(s) \right) \right\| &\leq C(K, \rho, \|  V\| _{\min(\rho, 1 )})  |z - z_0|^{\min(\rho, 1)}.
        \end{align}
\end{lema}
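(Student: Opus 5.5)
We work with the plus sign; the minus case is symmetric. Set $m_z(s) := z^{-s}u_+^z(s)$. By Lemma \ref{voltinK}, $m_z$ satisfies the Volterra equation
\begin{equation*}
m_z(s) = I - \sum_{r=s+1}^{\infty} G_z(r-s)\, V(r)\, m_z(r), \qquad G_z(n) := z^{n} S^z(n) = \frac{z^{2n}-1}{z^2-1}.
\end{equation*}
Lemma \ref{LAP1} gives the uniform bound $\sup_{s,\, z\in K}\|m_z(s)\| \le C(K,\|V\|_0)$. Subtracting the equations for $z$ and $z_0$, the difference $D(s) := m_z(s)-m_{z_0}(s)$ satisfies
\begin{equation*}
D(s) = -\sum_{r>s}\big(G_z(r-s)-G_{z_0}(r-s)\big)V(r)\,m_{z_0}(r) \;-\; \sum_{r>s} G_z(r-s)\,V(r)\,D(r).
\end{equation*}
We treat this as a new Volterra-type inequality for $D$.

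The key estimate is on the kernel difference. Write
\begin{equation*}
G_z(n) - G_{z_0}(n) = \frac{z^{2n}-z_0^{2n}}{z^2-1} + (z_0^{2n}-1)\Big(\frac{1}{z^2-1}-\frac{1}{z_0^2-1}\Big).
\end{equation*}
Since $K$ avoids $\pm1$, the denominators are bounded below. Remark \ref{IQ} applied to $z^2, z_0^2 \in \overline{\mathbb{D}}$ gives $|z^{2n}-z_0^{2n}| \le C|z-z_0|^{\min(\rho,1)} n^{\min(\rho,1)}$. The second term is bounded by $C|z-z_0| \le C(K)|z-z_0|^{\min(\rho,1)}$. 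Hence, with $\theta := \min(\rho,1)$,
\begin{equation*}
|G_z(n)-G_{z_0}(n)| \le C(K)\,|z-z_0|^{\theta}(1+n)^{\theta}.
\end{equation*}
The main obstacle is that $n = r-s$ is not controlled by $r$ alone when $s$ is negative. We use $1+r-s \le (1+|r|)(1+|s|)$ to get $(1+r-s)^{\theta} \le (1+|r|)^{\theta}(1+|s|)^{\theta}$. The first sum in the equation for $D$ is then bounded by
\begin{equation*}
C(K,\|V\|_0)\,|z-z_0|^{\theta}(1+|s|)^{\theta}\,\|V\|_{\theta}.
\end{equation*}
This is exactly why the weight $(1+|s|)^{-\rho}$ (at least $(1+|s|)^{-\theta}$) appears on the left-hand side of \eqref{z}, and why the constant depends on $\|V\|_{\min(\rho,1)}$. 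Note that $\|V\|_\theta \le \|V\|_\rho$ and $\|V\|_0 \le \|V\|_\theta$.

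Now multiply the equation for $D$ by $(1+|s|)^{-\theta}$ and set $d(s) := (1+|s|)^{-\theta}\|D(s)\|$. For $r > s$ the weight ratio satisfies $(1+|r|)^{\theta}/(1+|s|)^{\theta} \le (1+|r|)^{\theta}$, so
\begin{equation*}
d(s) \le C|z-z_0|^{\theta}\|V\|_\theta + \sum_{r>s} C(K)(1+|r|)^{\theta}\,\|V(r)\|\, d(r).
\end{equation*}
Here $\sup_{z\in K,\,n\ge 0}|G_z(n)|\le C(K)$, as used in Lemma \ref{LAP1}. The coefficient $C(K)(1+|r|)^{\theta}\|V(r)\|$ is summable with sum $C(K)\|V\|_{\theta}$. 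Before applying the discrete Gronwall lemma (\cite{MR4760547}, Lemma 25), we must know that $d$ is bounded; this follows from Lemma \ref{LAP1}, since $\|D(s)\|\le 2C(K,\|V\|_0)$. Gronwall then gives
\begin{equation*}
d(s) \le C|z-z_0|^{\theta}\|V\|_\theta\, \exp\big(C(K)\|V\|_\theta\big),
\end{equation*}
uniformly in $s$.

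Finally, since $(1+|s|)^{-\rho} \le (1+|s|)^{-\theta}$, the bound on $d$ yields \eqref{z}. The minus-sign case is handled identically using the second Volterra equation in \eqref{jostsolvolterra}, with $r<s$ and $|r-s|\le |r|+|s|$.
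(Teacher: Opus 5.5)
Your proof is correct and follows essentially the same route as the paper: you use the same splitting of the weighted difference into a kernel-difference term and a weighted Volterra term. The kernel-difference term is controlled via Remark \ref{IQ}, the inequality $1+r-s\le(1+|r|)(1+|s|)$ and Lemma \ref{LAP1}, and the Volterra term is closed by the discrete Gronwall lemma; your explicit check that $d$ is a priori bounded is a detail the paper leaves implicit. One harmless slip is that $z^nS^z(n)=\frac{z^{2n}-1}{z-z^{-1}}=\frac{z(z^{2n}-1)}{z^2-1}$, so your $G_z$ is missing a factor $z$; that factor is bounded and Lipschitz on $K$ and does not affect the kernel-difference estimate.
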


\begin{proof}
    From \eqref{jostsolvolterra}, we obtain for $z\in \overline{\mathbb{D}}\setminus \{0, 1, -1\}$:
    \begin{align}\label{b}
        &(1+|s|)^{-\min(\rho, 1 )} \left( z^{-s} u_{+}^{z}(s) - z_{0}^{-s} u_{+}^{z_{0}}(s) \right) \\
        &\notag = - (1+|s|)^{-\min(\rho, 1 )} \sum_{r=s+1}^{\infty} \left[ z^{r-s} S^{z}(r-s) V(r) z^{-r} u_{+}^{z}(r) - z_{0}^{r-s} S^{z_{0}}(r-s) V(r) z_{0}^{-r} u_{+}^{z_{0}}(r) \right] \\
        &\notag = - (1+|s|)^{-\min(\rho, 1 )} \sum_{r=s+1}^{\infty} \left( z^{r-s} S^{z}(r-s) V(r) - z_{0}^{r-s} S^{z_{0}}(r-s) V(r) \right) z^{-r} u_{+}^{z}(r) \\
        &\notag \quad - (1+|s|)^{-\min(\rho, 1 )} \sum_{r=s+1}^{\infty} (1+|r|)^{\min(\rho, 1 )} z_{0}^{r-s} S^{z_{0}}(r-s) V(r) \left( \frac{z^{-r} u_{+}^{z}(r) - z_{0}^{-r} u_{+}^{z_{0}}(r)}{(1+|r|)^{\min(\rho, 1 )}} \right).
    \end{align}
    
    We define:
    \begin{align}\label{f}
        f^{z}(s) &:= - (1+|s|)^{-\min(\rho, 1 )} \sum_{r=s+1}^{\infty} \left( z^{r-s} S^{z}(r-s) V(r) - z_{0}^{r-s} S^{z_{0}}(r-s) V(r) \right) z^{-r} u_{+}^{z}(r), \\
        g^{z}(s) &:= - (1+|s|)^{-\min(\rho, 1 )} \sum_{r=s+1}^{\infty} (1+|r|)^{\min(\rho, 1 )} z_{0}^{r-s} S^{z_{0}}(r-s) V(r) \left( \frac{z^{-r} u_{+}^{z}(r) - z_{0}^{-r} u_{+}^{z_{0}}(r)}{(1+|r|)^{\min(\rho, 1 )}} \right),
    \end{align}
    so that
    \begin{align}\label{c}
        (1+|s|)^{-\min(\rho, 1 )} \left( z^{-s} u_{+}^{z}(s) - z_{0}^{-s} u_{+}^{z_{0}}(s) \right) = f^{z}(s) + g^{z}(s).
    \end{align}
    
    For $r \geq s$ and  $ z \in K , $  
    we define:
    \begin{align}
        h^{z}(s,r) := z^{r-s} S^{z}(r-s) V(r) - z_{0}^{r-s} S^{z_{0}}(r-s) V(r).
    \end{align}
    We calculate 
\begin{align}
    z^{r-s} S^{z}(r-s) - z_{0}^{r-s} S^{z_{0}}(r-s) = \frac{1}{z - z^{-1}}  (z^{2 (r-s)} - 1 ) - \frac{1}{z_0 - z_0^{-1}}  (z_0^{2 (r-s)} - 1 ) 
    \end{align}
Remark \ref{IQ} yields:
   \begin{align}
        \| h^{z}(s,r) \| \leq C(K) |z - z_{0}|^{\min(\rho, 1)} (1 + (r-s))^{\min(\rho, 1)} \| V(r) \|.
    \end{align}
    This
     implies:
    \begin{align}
        \frac{\| h^{z}(s,r) \|}{(1+|s|)^{\min(\rho, 1 )}} \leq C(K ) |z - z_{0}|^{\min(\rho, 1)} (1 + |r|)^{\min(\rho, 1 )} \| V(r) \|,
    \end{align}
    since:
    \begin{align}
        \frac{1 + (r-s)}{1 + |s|} \leq 1 + |r|, \quad r \geq s.
    \end{align}
    From \eqref{f} and Lemma \ref{LAP1}, we conclude:
    \begin{align}\label{d}
        \sup_{s \in \mathbb{Z}} \| f^{z}(s) \| \leq C(K, \| V \|_0 ) |z - z_0|^{\min(\rho, 1 )} \| V \|_{\min(\rho, 1 )}
    \end{align}  (recall the definition of $\|\hspace{0.3cm}\|_{\min(\rho, 1)}$, \eqref{normV1}).
    
    On the other hand, using \eqref{ex2},  we calculate
    \begin{align}
        \frac{(1+|r|)^{\min(\rho, 1 )}}{(1+|s|)^{\min(\rho, 1 )}} \| z_{0}^{r-s} S^{z_{0}}(r-s) V(r) \| \leq C(K) (1 + |r|)^{\min(\rho, 1 )} \| V(r) \|,
    \end{align}
    for some constant $C(K)$. Thus:
    \begin{align}\label{e}
        \| g^{z}(s) \| \leq  \sum_{r=s+1}^{\infty} C(K)(1 + |r|)^{\min(\rho, 1 )} \| V(r) \| \left\| \frac{z^{-r} u_{+}^{z}(r) - z_{0}^{-r} u_{+}^{z_{0}}(r)}{(1+|r|)^{\min(\rho, 1 )}} \right\|.
    \end{align}
    
    Finally, combining \eqref{c}, \eqref{e}, and \eqref{d}, we obtain:
    \begin{align}
        &\left\| (1+|s|)^{-\min(\rho, 1 )} \left( z^{-s} u_{+}^{z}(s) - z_{0}^{-s} u_{+}^{z_{0}}(s) \right) \right\| \\
        \notag &\leq C(K, \| V \|_0 ) |z - z_0|^{\min(\rho, 1 )} \| V \|_{\min(\rho, 1 )} +& \\\notag  &\sum_{r=s+1}^{\infty} C(K)(1 + |r|)^{\min(\rho, 1 )} \| V(r) \| \left\| \frac{z^{-r} u_{+}^{z}(r) - z_{0}^{-r} u_{+}^{z_{0}}(r)}{(1+|r|)^{\min(\rho, 1 )}} \right\|.
    \end{align}
    Applying Gronwall's Lemma, we conclude:
    \begin{align}
        &\left\| (1+|s|)^{-\min(\rho, 1 )} \left( z^{-s} u_{+}^{z}(s) - z_{0}^{-s} u_{+}^{z_{0}}(s) \right) \right\| \leq \\ &\notag C(K, \| V \|_0  ) |z - z_0|^{\min(\rho, 1 )} \| V \|_{\min(\rho, 1 )} \exp \left( C(K)   \| V  \|_{\min(\rho, 1 ) } \right),
    \end{align}
    which completes the proof of \eqref{z} for $ u_+ $, after noticing that $  \frac{(1 + |s|)^{\min(\rho, 1 )}}{   (1 + |s|)^{\rho }} \leq 1   $. The proof of the other cases  is similar.
\end{proof}

\begin{propo}\label{LAP2}
   Let $K^{\pm}\subseteq  \overline{\mathbb{C}^{\pm}}\setminus ( \sigma_p (H)\cup \{-2,2 \} )$ be a compact set. 
   There is a constant $C(K^{\pm})$ independent of $E$, $s$ and $r$, such that:
   \begin{equation}\label{i2}
       \|[R^{\pm}_{H}(E)]_{s,r}\|\leq C(K^\pm).
   \end{equation}
\end{propo}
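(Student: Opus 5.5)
We work with the explicit kernel of Theorem \ref{formulaGreenmatrixelements} and its boundary values in Theorem \ref{Green2.1}, and write each entry as a product of three factors: a normalized Jost solution, an inverse Wronskian, and the adjoint of another normalized Jost solution. For $E\in K^{\pm}$ let $z=r(E)$ when $E\notin[-2,2]$, and let $z=r_{\pm}(E)$ (or its inverse, as dictated by Theorem \ref{Green2.1}) when $E\in(-2,2)$. The set of relevant parameters $z$ is then contained in a compact set $\tilde K\subset\overline{\mathbb{D}}\setminus\{0,-1,1\}$. This holds because $K^{\pm}$ is compact and avoids $\pm2$, and because $r$ extends continuously to $\overline{\mathbb{C}^{\pm}}\setminus\{-2,2\}$ by Lemma \ref{rpm}, with values bounded away from $0$ when $E$ is bounded. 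For the boundary values we may also use $z\mapsto\bar z^{-1}=z$ on $\mathbb{S}^1$, so every Jost solution in the formula has parameter in $\tilde K$ or in its conjugate, which is again compact.

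\emph{Step 1: the exponential factors cancel.} For $s\ge r$ we write $u_+^{z}(s)=z^{s}\tilde u_+^{z}(s)$ and $u_-^{\bar z^{-1}}(r)^*=(\bar z^{-r}\tilde u_-^{\bar z^{-1}}(r))^{*}=z^{-r}\tilde u_-^{\bar z^{-1}}(r)^*$. The entry becomes $z^{s-r}\tilde u_+(s)W^{-1}\tilde u_-(r)^*$, and $|z^{s-r}|\le 1$ since $s\ge r$ and $|z|\le1$. The case $s<r$ is symmetric and gives the factor $z^{r-s}$. On $\mathbb{S}^1$ the factor has modulus $1$, which matches \eqref{eq:Green1}--\eqref{eq:Green2}.

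\emph{Step 2: the normalized Jost solutions are bounded.} Lemma \ref{LAP1} gives $\sup_{s,\,z\in\tilde K}\|z^{\mp s}u_\pm^{z^{\pm1}}(s)\|\le C(\tilde K,\|V\|_0)$. This covers both normalized factors uniformly in $s$, $r$ and $E$.

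\emph{Step 3: the inverse Wronskians are bounded; this is the main obstacle.} The map $z\mapsto W(u_-^{\bar z^{-1}},u_+^{z})$ can be evaluated at a fixed $n=0$, so it involves only $u_\pm(0)$ and $u_\pm(1)$. It is therefore continuous on $\tilde K$ by Lemma \ref{voltinK}. By Theorem \ref{Winvertible} it is invertible at every $z$ with $J(z)\notin\sigma_p(H)\cup\{\pm2\}$. On $\mathbb{S}^1\setminus\{\pm1\}$ invertibility follows from \eqref{MN} together with the invertibility of $M_\pm^z$, since $\nu^z$ is finite and nonzero there. Matrix inversion is continuous on the invertible matrices, so $z\mapsto W^{-1}$ is continuous on the compact set $\tilde K$ and hence bounded. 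The delicate point is checking that the boundary Wronskian matches the limit of the interior one. This means verifying that $\bar z_\epsilon^{-1}\to z$ under the identification used in Theorem \ref{Green2.1}, so that a single continuous function covers both $E\notin(-2,2)$ and $E\in(-2,2)$ in $K^{\pm}$. This is where the separation of $K^{\pm}$ from the eigenvalues and from $\pm2$ is used.

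Combining the three bounds gives $\|[R_H^{\pm}(E)]_{s,r}\|\le C(\tilde K)^2\sup_{\tilde K}\|W^{-1}\|=:C(K^\pm)$, uniformly in $s$, $r$ and $E\in K^{\pm}$. The case $s<r$ is treated in the same way with the other Wronskian.
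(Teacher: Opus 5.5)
Your proof is correct and follows the paper's argument. The ingredients are the same: compactness of the parameter set $r_\pm(K^\pm)$; continuity and invertibility of the Wronskian there (Theorem \ref{Winvertible}, \eqref{MN}, Lemma \ref{voltinK}), so that its inverse is bounded; Lemma \ref{LAP1} for the normalized Jost factors; and $|z^{s-r}|\le 1$ for the remaining power. One small correction to your Step 3: matching the boundary Wronskian with the interior one needs only the continuity of $w\mapsto u_-^{1/w}$ on $\overline{\mathbb{D}}\setminus\{0\}$ together with $\bar z^{-1}=z$ on $\mathbb{S}^1$, whereas the separation of $K^\pm$ from $\sigma_p(H)\cup\{\pm 2\}$ is what gives invertibility and keeps $z$ away from $\pm 1$.
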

\begin{proof} We prove the statement for the plus sign.   Note that  that $r_+(K^+)$ is compact. Moreover,  $W(u_-^{\overline{z}^{-1}}, u_+^{z})$ is continuous and invertible for $ z \in  r_+(K^+)  $ (see Theorems \ref{Winvertible} and \ref{voltinK}, Lemma \ref{rpm}, \eqref{r_+}). It is, therefore, uniformly bounded on 
$ r_+(K^+) .$  Moreover,  $z^{-s}u_+^{z}(s)$ and $z^ r u_-^{\overline{z}^{-1}}(r)^{*}$ are uniformly bounded (as matrix multiplication operators) on $ r_+(K^+) $ with respect to $z,  r$ and $s$, see Lemma \ref{LAP1}. Then we have that 
\begin{align}
    |z^{s-r}|\left\|z^{-s}u_+^{z}(s)W(u_-^{\overline{z}^{-1}}, u_+^{z})^{-1}z^{r}u_-^{\overline{z}^{-1}}(r)^{*}\right\|\end{align}
    is uniformly bounded for $z \in r_+(K^+)$ and $  s \geq r . $ Now we use  \eqref{eq:Green} and \eqref{eq:Green1.1})  and deduce that 
   there is a constant $C(K^+)$ independent of $E$, $s$ and $r$ for $s\geq r$, such that:
   \begin{equation}
       \|[R^{+}_{H}(E)]_{s,r}\|\leq C(K^+),
   \end{equation}
   for every $ E \in K^+  .$ 
For the  case $ s \leq r$ we obtain similar bounds using the second line of the right hand side of \eqref{eq:Green}.  The analysis for $  K^- $ and $R_{H}(E - i 0)$ is performed similarly.

\end{proof}
 
\begin{obs}\label{resobon}
By Proposition \ref{LAP2}, if $E \in (-2,2)$, then the operator 
$
    R_{H}(E\pm i0): \ell^{1}(\mathbb{Z}, \mathcal{M}_{L}) \longrightarrow \ell^{\infty}(\mathbb{Z}, \mathcal{M}_{L})
$ 
defined by 
\begin{align}
    (R_{H}(E\pm i0)u)(s) = \sum_{r\in \mathbb{Z}} [R_{H}(E\pm i0)]_{s,r} u(r)
\end{align}
is well-defined (see Remak \ref{COP}). Consequently, for $u, v \in \ell^{1}(\mathbb{Z}, \mathcal{M}_{L})$, the product
\begin{align}
    \langle R_{H}(E \pm i0)u, v \rangle _{\ell ^{\infty} - \ell^1}:= \sum_{s \in \mathbb{Z}} \langle (R_{H}(E \pm i0)u)(s), v(s) \rangle _{\mathcal{M}_L}
\end{align}
is well-defined.
\end{obs}
\begin{teo}
    For all \(u, v \in \ell^{1}(\mathbb{Z}, \mathcal{M}_{L})\) and for every \(E_{0} \in (-2,2)\), we have:
    \begin{align}
       \lim_{E \to E_{0},\, \pm \Im E > 0} \langle R_{H}(E)u, v \rangle _{\ell ^{2}}= \langle R_{H}(E_{0}\pm i0)u, v \rangle _{\ell ^{\infty} - \ell^1}.
    \end{align}
\end{teo}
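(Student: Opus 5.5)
The plan is to write the pairing as a double sum over the kernel and pass to the limit with dominated convergence. For $E\notin\sigma(H)$ and $u,v\in\ell^1\subset\ell^2$, Remark \ref{RKsr} gives
\begin{align*}
\langle R_H(E)u,v\rangle_{\ell^2}=\sum_{s\in\mathbb{Z}}\sum_{r\in\mathbb{Z}}\big\langle [R_H(E)]_{s,r}u(r),v(s)\big\rangle_{\mathcal{M}_L}.
\end{align*}
Since $u,v\in\ell^1$, the double sum is absolutely convergent as soon as the kernel is bounded uniformly in $s,r$. So the proof reduces to two facts: pointwise convergence of $[R_H(E)]_{s,r}$ to $[R_H(E_0\pm i0)]_{s,r}$ for each fixed $(s,r)$, and a bound on $\|[R_H(E)]_{s,r}\|$ that is uniform in $s$, $r$ and in $E$ near $E_0$.

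\textbf{Pointwise convergence.} Theorem \ref{Green2.1} gives the convergence along vertical lines $E_0\pm i\epsilon$, and its proof applies verbatim to arbitrary approach $E\to E_0$ with $\pm\Im E>0$. Indeed, Lemma \ref{rpm} gives $r(E)\to r_\pm(E_0)$ along any such path. The Jost solutions $u^{z}_\pm(n)$ are continuous in $z$ on $\overline{\mathbb{D}}\setminus\{0\}$ by Lemma \ref{jostvolterra} (or Lemma \ref{voltinK} if only $\|V\|_0<\infty$ is assumed). The Wronskians are then continuous, and their limit is invertible by \eqref{MN} together with the invertibility of $M^z_\pm$. Hence formula \eqref{eq:Green} converges to the expressions in Theorem \ref{Green2.1}.

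\textbf{Uniform bound.} Choose $\delta>0$ with $\delta<\operatorname{dist}(E_0,\{-2,2\})$ and with $\delta$ smaller than the distance from $E_0$ to the finite set $\sigma_p(H)$, which lies outside $[-2,2]$. Set
\begin{align*}
K^\pm=\{E:\ |E-E_0|\le\delta,\ \pm\Im E\ge0\}.
\end{align*}
This is a compact subset of $\overline{\mathbb{C}^\pm}\setminus(\sigma_p(H)\cup\{-2,2\})$. Proposition \ref{LAP2} then gives $\|[R^\pm_H(E)]_{s,r}\|\le C(K^\pm)$ for all $E\in K^\pm$ and all $s,r$. For $\pm\Im E>0$ we have $R^\pm_H(E)=R_H(E)$, so this bound covers the approximating kernels, and by continuity it also covers the limit kernel.

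\textbf{Conclusion.} Each summand is dominated by $C(K^\pm)\|u(r)\|\,\|v(s)\|$, and
\begin{align*}
\sum_{s,r}\|u(r)\|\,\|v(s)\|=\|u\|_{\ell^1}\|v\|_{\ell^1}<\infty,
\end{align*}
using the equivalence of norms from Remark \ref{COP}. Dominated convergence on $\mathbb{Z}^2$ therefore yields the limit $\sum_{s,r}\langle[R_H(E_0\pm i0)]_{s,r}u(r),v(s)\rangle$. This equals $\langle R_H(E_0\pm i0)u,v\rangle_{\ell^\infty-\ell^1}$ by Remark \ref{resobon}; the absolute convergence justifies summing in $r$ first.

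The main obstacle is the uniformity of the kernel bound in $s,r$ near the real axis, since it needs the decay $|z^{s-r}|\le1$ for $s\ge r$ together with the uniform bounds of Lemma \ref{LAP1}. Proposition \ref{LAP2} already supplies this, so the remaining care is only in confirming that Theorem \ref{Green2.1}'s continuity argument applies to non-vertical approach.
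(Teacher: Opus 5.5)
Your proposal is correct and follows the paper's proof. Like the paper, you expand the pairing as a double sum over the kernel, take the uniform kernel bound near $E_0$ from Proposition \ref{LAP2} and the pointwise limit from Theorem \ref{Green2.1}, and conclude by dominated convergence against $\|u(r)\|\,\|v(s)\|$. Your remark that the continuity argument of Theorem \ref{Green2.1}, via Lemma \ref{rpm}, also covers non-vertical approach $E\to E_0$ adds a useful check that the paper leaves implicit, since that theorem is stated only for limits along $E_0\pm i\epsilon$.
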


\begin{proof}
    Using \eqref{integraloperator}, we obtain:
    \begin{align}
        \langle (R_{H}(E) - R_{H}(E_{0}\pm i0))u, v \rangle _{\ell ^{\infty} - \ell^1} 
        &\notag= \sum_{s \in \mathbb{Z}} \langle (R_{H}(E) - R_{H}(E_{0}\pm i0))u(s), v(s) \rangle _{\mathcal{M}_L} \\
        &\notag= \sum_{s \in \mathbb{Z}} \left\langle \sum_{r \in \mathbb{Z}} [R_{H}(E) - R_{H}(E_{0}\pm i0)]_{s,r} \, u(r),\; v(s) \right\rangle _{\mathcal{M}_L}\\
        &= \sum_{s \in \mathbb{Z}} \sum_{r \in \mathbb{Z}} \left\langle [R_{H}(E) - R_{H}(E_{0}\pm i0)]_{s,r} \, u(r),\; v(s) \right\rangle _{\mathcal{M}_L}.
    \end{align}
    By the Cauchy-Schwarz inequality, it follows that:
    \begin{align}\label{LAP3}\notag
        \left| \langle (R_{H}(E) - R_{H}(E_{0}\pm  i0))u, v \rangle _{\ell ^{\infty} - \ell^1}\right| 
        &\leq \\  &\sum_{s \in \mathbb{Z}} \sum_{r \in \mathbb{Z}} 
        \left\| [R_{H}(E) - R_{H}(E_{0}\pm i0)]_{s,r} \right\| \,
        \| u(r) \|_{\mathcal{M}_{L}} \, \| v(s) \|_{\mathcal{M}_{L}}.
    \end{align}
    
    By Proposition \ref{LAP2}, there exists a constant $C$ such that $\left\| [R_{H}(E) - R_{H}(E_{0}+i0)]_{s,r} \right\| \leq C$ in some compact neighborhood of $E_0$. Therefore, using Lebesgue's Dominated Convergence Theorem and Theorem \ref{Green2.1}, it follows from \eqref{LAP3} that:
    \begin{align}
       \lim_{E \to E_{0},\, \pm \Im E > 0}  \left| \langle (R_{H}(E) - R_{H}(E_{0}\pm i0))u, v \rangle _{\ell ^{\infty} - \ell^1}\right| = 0.
    \end{align}
\end{proof}

\begin{defi}\label{sigma}
 For \(\alpha \in\mathbb{R}^{+}\) we define the operator 
$T_{-\alpha }:\ell ^{2}(\mathbb{Z},\mathcal{M}_L)\to \ell ^{2}(\mathbb{Z},\mathcal{M}_L),$
as
\begin{align}
   ( T_{-\alpha }u)(s)=(1+|s|)^{-\alpha}u(s)
\end{align}
which is a bounded operator for $\alpha \geq 0$. 
\end{defi}

\begin{teo}\label{teoaco} For every \(\alpha > \frac{1}{2}\) and \(E\in(-2,2)\), 
\begin{align}\label{i4}
    T_{-\alpha}R(E\pm i0)T_{-\alpha }\in\mathcal{B}(\ell ^{2}(\mathbb{Z},\mathcal{M}_L)).
\end{align}
Moreover, 
\begin{equation} \label{z3}\|T_{-\alpha }R(E\pm i0)T_{-\alpha }\|\leq C(\alpha )\sup_{s,r\in\mathbb{Z}}\|[R_H(E\pm i0)]_{s,r}\|.\end{equation} where $C(\alpha )$ is a constant which depends of $\alpha .$
\end{teo}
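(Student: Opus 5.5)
The plan is a Hilbert--Schmidt (Schur-type) bound on the weighted kernel. By Proposition \ref{LAP2}, applied to a compact set $K^{\pm}$ that contains the single point $E$ (for instance $\{E\}$, which avoids $\sigma_p(H)\cup\{-2,2\}$ because $E\in(-2,2)$ and $\sigma_p(H)\cap[-2,2]=\emptyset$), the quantity
\[
M:=\sup_{s,r\in\mathbb{Z}}\|[R_H(E\pm i0)]_{s,r}\|
\]
is finite. The operator $T_{-\alpha}R(E\pm i0)T_{-\alpha}$ is the integral operator with kernel
\[
K_{s,r}:=(1+|s|)^{-\alpha}\,[R_H(E\pm i0)]_{s,r}\,(1+|r|)^{-\alpha}.
\]
It is a priori defined on all of $\ell^2(\mathbb{Z},\mathcal{M}_L)$: since $2\alpha>1$, the sequence $T_{-\alpha}u$ lies in $\ell^1$ by Cauchy--Schwarz, so Remark \ref{resobon} makes the action meaningful.

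The key estimate goes as follows. For $u\in\ell^2(\mathbb{Z},\mathcal{M}_L)$ and each $s$ we bound
\[
\|(Ku)(s)\|_{\mathcal{M}_L}\le \sum_r \|K_{s,r}\|\,\|u(r)\|_{\mathcal{M}_L}\le M(1+|s|)^{-\alpha}\Big(\sum_r(1+|r|)^{-2\alpha}\Big)^{1/2}\|u\|_{\ell^2}.
\]
Here the norm of $[R_H]_{s,r}$ as a multiplication operator on $\mathcal{M}_L$ is used, which is legitimate by Remark \ref{COP}. Squaring and summing over $s$ then gives
\[
\|Ku\|_{\ell^2}\le M\sum_{n\in\mathbb{Z}}(1+|n|)^{-2\alpha}\,\|u\|_{\ell^2}.
\]
This yields \eqref{z3} with $C(\alpha)=\sum_n(1+|n|)^{-2\alpha}\le 1+2\zeta(2\alpha)<\infty$, again because $\alpha>1/2$. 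The estimate is simply the Hilbert--Schmidt bound $\big(\sum_{s,r}\|K_{s,r}\|^2\big)^{1/2}$ written out, and it proves both boundedness and the norm inequality at once.

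There is no real obstacle here, only bookkeeping. One point to check is that the constant in Remark \ref{COP}, relating the operator norm and the matrix norm on $\mathcal{M}_L$, is absorbed into $C(\alpha)$; this is allowed by the paper's constant convention. The other is that the uniform bound from Proposition \ref{LAP2} is genuinely available at the boundary value $E\pm i0$. It is: there $R^{\pm}_H(E)$ coincides with the limit kernel of Theorem \ref{Green2.1}, whose boundedness in $s,r$ follows from Lemma \ref{LAP1} together with the invertibility of the Wronskian on $\mathbb{S}^1\setminus\{\pm1\}$ given by \eqref{MN}.
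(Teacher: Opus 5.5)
Your proof is correct and follows the paper's argument: Cauchy--Schwarz in $r$ using $\sum_r(1+|r|)^{-2\alpha}<\infty$, then squaring and summing over $s$, which gives the same constant $C(\alpha)=\sum_{n}(1+|n|)^{-2\alpha}$. Your use of Proposition \ref{LAP2} to show that $\sup_{s,r}\|[R_H(E\pm i0)]_{s,r}\|<\infty$ makes explicit a point the paper leaves implicit.
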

\begin{proof}
If \(u\in \ell ^{2}(\mathbb{Z},\mathcal{M}_L)\), the Cauchy-Schwarz inequality implies that
\begin{align}\label{z1}
\sum_{r\in\mathbb{Z}}(1+|r|)^{-\alpha }\|[R_H(E\pm i0)]_{s,r}u(r)\|_{\mathcal{M}_L}\leq \left(\sum_{r\in\mathbb{Z}}\frac{1}{(1+|r|)^{2\alpha }}\right)^{\frac{1}{2}}\sup_{s,r\in\mathbb{Z}}\|[R_H(E\pm i0)]_{s,r}\|\|u\|_{\ell ^{2}}.
\end{align}

Using \eqref{z1}, we have:
\begin{align}\label{z2}
&\sum_{s\in\mathbb{Z}}(1+|s|)^{-2\alpha }\left(\sum_{r\in\mathbb{Z}}(1+|r|)^{-\alpha }\|[R_H(E\pm i0)]_{s,r}u(r)\|_{\mathcal{M}_L}\right)^{2} \\&\notag\leq \left(\sum_{r\in\mathbb{Z}}\frac{1}{(1+|r|)^{2\alpha }}\right)^{2}\sup_{s,r\in\mathbb{Z}}\|[R_H(E\pm i0)]_{s,r}\|^{2}\|u\|_{\ell ^{2}}^{2}.
\end{align}
Equation \eqref{z2} implies that \(T_{-\alpha }R(E\pm i0)T_{-\alpha }\in\mathcal{B}(\ell ^{2}(\mathbb{Z},\mathcal{M}_L))\) and \eqref{z3}, with 
$  C(\alpha)=\left(\sum_{r\in\mathbb{Z}}\frac{1}{(1+|r|)^{2\alpha }}\right). $

\color{black}

\end{proof}
\color{black}
\begin{lema}\label{y11}
   Let $K \subset \overline{ \mathbb{D}} \setminus  \{0, -1,  1\} $ be a compact set. 
  Suppose that $ \|  V \|
  _{\rho} < \infty $. Then for $\rho > 0$ and $z \in K$, the following estimates hold:     
     
     \begin{align}\label{i5}
         \|W(u_-^{\overline{z}^{-1}}, u_+^{z})-W(u_-^{z_0}, u_{+}^{z_0})\|\leq C(K, \rho, \|  V\| _{\min(\rho, 1 )})   | z- z_0|^{\min(\rho, 1)} .
     \end{align}
\end{lema}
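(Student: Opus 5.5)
Since the Wronskian of a solution of $\tau u=Eu$ and a solution of $\tau v=\overline{E}v$ does not depend on $n$ (Remark \ref{ND}), I will evaluate both Wronskians in \eqref{i5} at the same fixed site, say $n=0$. Then everything reduces to finitely many values of Jost solutions, at $s=0,1$. I read $z_0$ as a point of $K\cap\mathbb{S}^1$, so that $\overline{z_0}^{-1}=z_0$ and $W(u_-^{z_0},u_+^{z_0})$ is the value of $z\mapsto W(u_-^{\overline{z}^{-1}},u_+^{z})$ at $z=z_0$; this is how the lemma is used with the boundary values of Theorem \ref{Green2.1}. The definition gives
\begin{align*}
W(u_-^{\overline{z}^{-1}},u_+^{z})=i\big(u_-^{\overline{z}^{-1}}(1)^{*}u_+^{z}(0)-u_-^{\overline{z}^{-1}}(0)^{*}u_+^{z}(1)\big),
\end{align*}
and similarly at $z_0$. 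Subtracting and adding mixed terms, e.g.
\begin{align*}
a(z)^*b(z)-a(z_0)^*b(z_0)=(a(z)-a(z_0))^*b(z)+a(z_0)^*(b(z)-b(z_0)),
\end{align*}
bounds the difference by a sum of products. Each product is one factor that is uniformly bounded on $K$ times one difference of Jost solutions at $s\in\{0,1\}$.

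For the bounded factors I use Lemma \ref{LAP1}. At $s\in\{0,1\}$ the factors $z^{\mp s}$ have modulus at most $C(K)$, because $K$ avoids $0$. Hence $\|u_+^{z}(s)\|$ is bounded uniformly on $K$ for $s\in\{0,1\}$. For $u_-^{\overline{z}^{-1}}$ I apply the same lemma with parameter $w=\overline{z}$ on the compact set $\overline{K}=\{\overline{z}:z\in K\}$, which also avoids $\{0,\pm1\}$.

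For the differences I use Lemma \ref{y10} at $s=0$ and $s=1$, where the weight $(1+|s|)^{-\rho}$ is a harmless constant. This gives
\begin{align*}
\|z^{-s}u_+^{z}(s)-z_0^{-s}u_+^{z_0}(s)\|\le C(K,\rho,\|V\|_{\min(\rho,1)})\,|z-z_0|^{\min(\rho,1)}.
\end{align*}
To remove the factor $z^{-s}$, I write $u_+^{z}(1)-u_+^{z_0}(1)=z\,(z^{-1}u_+^{z}(1)-z_0^{-1}u_+^{z_0}(1))+(z-z_0)z_0^{-1}u_+^{z_0}(1)$. The last term is $O(|z-z_0|)\le C(K)|z-z_0|^{\min(\rho,1)}$, since $K$ is bounded. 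For the minus solutions I again pass to $w=\overline{z}$, $w_0=\overline{z_0}$, so that $u_-^{\overline{z}^{-1}}=u_-^{1/w}$ and $u_-^{z_0}=u_-^{1/w_0}$ (using $|z_0|=1$). Lemma \ref{y10} on $\overline{K}$ applies because $|w-w_0|=|z-z_0|$. Taking adjoints does not change norms.

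Combining the bounded factors with the Hölder differences yields \eqref{i5}. The only delicate point, and the step I would check most carefully, is the bookkeeping of the parametrization of $u_-$. I must make sure that $\overline{z}^{-1}$ at $z=z_0\in\mathbb{S}^1$ really is $z_0$, and that the conjugated compact set still satisfies the hypotheses of Lemmas \ref{LAP1} and \ref{y10}. The analytic content is otherwise already contained in those lemmas.
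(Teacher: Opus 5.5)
Your proposal is correct and follows the same route as the paper. The paper's proof simply invokes the $n$-independence of the Wronskian together with Lemma \ref{y10}; your evaluation at $n=0$, the product splitting with Lemma \ref{LAP1} for the bounded factors, the removal of the $z^{\mp s}$ weights, and the conjugation $w=\overline{z}$ for $u_-$ just spell out that one-line argument. One small remark: in Lemma \ref{y12} this lemma is applied with $z_0=r_\pm(E_0)$ possibly inside $\mathbb{D}$, so the second Wronskian should be read as $W(u_-^{\overline{z_0}^{-1}},u_+^{z_0})$. Your argument covers that case verbatim, since $u_-^{\overline{z_0}^{-1}}=u_-^{1/w_0}$ with $w_0=\overline{z_0}$ holds by definition and does not need $|z_0|=1$.
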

\begin{proof}
This is a direct consequence of  Lemma \ref{y10} and the definition of the Wronskian (which does not depend on $n$, see Definition \ref{wronskidefi} and Remark \ref{ND} ).


\end{proof}

\begin{lema} \label{y12} Suppose that  $\|  V \|
  _{\rho} < \infty .$ 
   Let $K^{\pm}\subseteq  \overline{\mathbb{C}^{\pm}}\setminus ( \sigma_p (H)\cup \{-2,2 \} )$ be a compact set. 
   Then, there is a constant $ C(K^{\pm}, \rho, \|  V\|_{\rho})$ such that:
    \begin{align}\label{i6}
    \frac{\|[R^{\pm}_{H}(E)]_{s,r}-[R^{\pm}_{H}(E_0)]_{s,r}\|}{(1+|r|)^{\rho }(1+|s|)^{\rho }}\leq C(K^{\pm}, \rho, \|  V \|_{\min(\rho, 1)})|z_0 -z|^{\min(\rho, 1) }  ,
\end{align}   $z= r^\pm(E),  z_0 = r^{\pm}(E_0),   \: E, E_0 \in K^{\pm}.$
\end{lema}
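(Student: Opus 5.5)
We prove the estimate for the plus sign and $s\geq r$; the case $s<r$ uses the second line of \eqref{eq:Green}, and the minus sign is analogous. The plan is to write the kernel in factored form and telescope the difference. By \eqref{eq:Green} and Theorem \ref{Green2.1} (with Lemma \ref{rpm}, so that $z=r_+(E)$ parametrizes both the interior and boundary cases), for $s\geq r$ and $E\in K^+$ we have
\begin{align*}
[R^{+}_{H}(E)]_{s,r} = -i\, z^{s-r}\, a^{z}(s)\, W_z^{-1}\, b^{z}(r),
\end{align*}
where $a^z(s):=z^{-s}u_+^z(s)$, $b^z(r):=z^{r}u_-^{\overline{z}^{-1}}(r)^*$ and $W_z:=W(u_-^{\overline{z}^{-1}},u_+^z)$. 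Here $r_+(K^+)=:K$ is a compact subset of $\overline{\mathbb{D}}\setminus\{0,-1,1\}$, because $K^+$ avoids $\pm2$.

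The difference $[R^+_H(E)]_{s,r}-[R^+_H(E_0)]_{s,r}$ then splits into four terms, in each of which exactly one factor is replaced by its increment. The factors are handled as follows.

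\textbf{(i)} For the scalar factor, Remark \ref{IQ} with $n=s-r\geq 0$ gives $|z^{s-r}-z_0^{s-r}|\leq 2|z-z_0|^{\min(\rho,1)}(s-r)^{\min(\rho,1)}$. Moreover $(s-r)^{\min(\rho,1)}\leq (1+|s|)^{\rho}(1+|r|)^{\rho}$.

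\textbf{(ii)} For the Jost factors, Lemma \ref{y10} gives
\begin{align*}
\|a^z(s)-a^{z_0}(s)\|\leq C(1+|s|)^{\rho}|z-z_0|^{\min(\rho,1)}.
\end{align*}
For $b^z$ we use that the conjugate variable $\overline{z}^{-1}$ pairs with $z^{r}$ after taking adjoints; Lemma \ref{y10} for $u_-$, applied at the point $\overline{z}^{-1}$ conjugated back, yields the analogous bound with $(1+|r|)^{\rho}$. We must check that $|\overline{z}-\overline{z_0}|=|z-z_0|$ and that the relevant set stays compact away from $\{0,\pm1\}$.

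\textbf{(iii)} For the inverse Wronskian, write $W_z^{-1}-W_{z_0}^{-1}=W_z^{-1}(W_{z_0}-W_z)W_{z_0}^{-1}$ and use Lemma \ref{y11}. The inverses are uniformly bounded on $K$ because $W_z$ is continuous and invertible there; this follows from Theorem \ref{Winvertible}, \eqref{MN} and the invertibility of $M_\pm^z$ on $\mathbb{S}^1$, exactly as in Proposition \ref{LAP2}.

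\textbf{(iv)} All unperturbed factors are bounded uniformly in $s,r,z$: $|z^{s-r}|\leq 1$ since $s\geq r$ and $|z|\leq 1$, and $a,b$ are bounded by Lemma \ref{LAP1}.

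Multiplying these bounds, each of the four terms is at most $C|z-z_0|^{\min(\rho,1)}(1+|s|)^{\rho}(1+|r|)^{\rho}$, which is \eqref{i6}.

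I expect the main obstacle to be bookkeeping rather than analysis, in two places. First, the Wronskian in Lemma \ref{y11} is stated with $u_-^{z_0}$ in the boundary form; one must verify that the interior formula with $\overline{z}^{-1}$ and the boundary formula agree continuously when $|z|=1$, i.e.\ that $\overline{z}^{-1}=z$ there. Second, Lemma \ref{y10} must be applied to the ``$-$'' Jost solution evaluated at $\overline{z}^{-1}$, which lies outside $\overline{\mathbb{D}}$ when $|z|<1$. I would first try to resolve this by noting that, by uniqueness of the Volterra solution, $u_-^{\overline{z}^{-1}}(r)^*=u_-^{z^{-1}}(r)$ (the potential is self-adjoint and $S$ has real coefficients), so $b^z(r)=z^r u_-^{z^{-1}}(r)$ and Lemma \ref{y10} applies directly with $z\in K$.
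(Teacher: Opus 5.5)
Your four-term telescoping is the paper's proof. The ingredients match: Remark \ref{IQ} together with $(s-r)^{\min(\rho,1)}\le((1+|s|)(1+|r|))^{\rho}$ for the scalar factor, and Lemma \ref{y10} for the Jost factors. For the Wronskian you use $W_z^{-1}-W_{z_0}^{-1}=W_z^{-1}(W_{z_0}-W_z)W_{z_0}^{-1}$ with Lemma \ref{y11}. Lemma \ref{LAP1} and uniform invertibility of the Wronskian on $r_+(K^+)$ handle the remaining factors. You are also right that Lemma \ref{y11}, and the fourth term of the paper's decomposition, should be read with $u_-^{\overline{z_0}^{-1}}$, which coincides with $u_-^{z_0}$ only when $|z_0|=1$.

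The device in your last paragraph, however, is false for $L\ge 2$: $u_-^{\overline{z}^{-1}}(r)^*\neq u_-^{z^{-1}}(r)$ in general. Taking adjoints in the Volterra equation for $u_-^{\overline{z}^{-1}}$ gives, for $v(n):=u_-^{\overline{z}^{-1}}(n)^*$, the equation $v(n)=z^{-n}I+\sum_{j<n}S^{1/z}(j-n)\,v(j)V(j)$, with $V(j)$ acting on the right. So $v$ is the Jost solution of $v(n+1)+v(n-1)+v(n)V(n)=Ev(n)$, not of $\tau v=Ev$, and uniqueness identifies nothing. For a concrete counterexample, take $V(0)=A$ and $V(1)=B$ self-adjoint with $AB\neq BA$, and $V=0$ elsewhere. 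The recursion gives $u_-^{1/z}(2)=z^{-2}I-(z+z^{-1})A-z^{-1}B+BA$, while $u_-^{1/\overline{z}}(2)^*$ has $AB$ in place of $BA$. This non-commutativity is exactly why the kernel carries $u_-^{\overline{z}^{-1}}(r)^*$.

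The detour is unnecessary, because the obstacle comes from a misreading. $u_-^{\overline{z}^{-1}}$ is the minus Jost solution $u_-^{w^{-1}}$ with $w=\overline{z}\in\overline{\mathbb{D}}$, which is precisely the object in the minus-sign case of Lemmas \ref{LAP1} and \ref{y10}. Apply those lemmas on the compact set $\{\overline{z}: z\in K\}\subset\overline{\mathbb{D}}\setminus\{0,\pm1\}$. This gives $\|\overline{z}^{\,r}u_-^{\overline{z}^{-1}}(r)-\overline{z_0}^{\,r}u_-^{\overline{z_0}^{-1}}(r)\|\le C(1+|r|)^{\rho}|z-z_0|^{\min(\rho,1)}$. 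Taking adjoints yields the bound on $b^z(r)-b^{z_0}(r)$, and in the same way the uniform bound on $b^z$. The same applies to the factor $u_+^{\overline{z}}(r)^*$ in the case $s<r$. This is the route you sketched in (ii), and with it your argument coincides with the paper's.
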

\begin{proof} First we assume that  $ s\geq r $ and take the plus sign. Note that (see  \eqref{eq:Green},  \eqref{eq:Green1.1})
    \begin{align}
     &  -i \Big (  [R^+_{H}(E)]_{s,r}-[R^+_{H}(E_0)]_{s,r}   \Big ) =\notag
     (z_0 ^{s-r} -z^{s-r})z_0 ^{-s}u_{+}^{z_0}(s)W(u_-^{\overline{z_0}^{-1}} ,u_{+}^{z_0})^{-1}z_0 ^{r}u_-^{\overline{z_0}^{-1}}(r)^{*}\notag\\
     &+z^{s-r}(z_0^{-s}u_{+}^{z_0}(s)-z^{-s}u_+^{z}(s))W(u_-^{\overline{z_0}^{-1}} ,u_{+}^{z_0})^{-1}z_0 ^{r}u_-^{\overline{z_0}^{-1}}(r)^{*}\notag\\
     &+z^{s-r}z^{-s}u_+^{z}(s)(W(u_-^{\overline{z_0}^{-1}} ,u_{+}^{z_0})^{-1}-W(u_-^{\overline{z}^{-1}},u_+^{z})^{-1})z_0^{r}u_-^{\overline{z_0}^{-1}}(r)^{*}\notag\\
     &+z^{s-r}z^{-s}u_+^{z}(s)W(u_-^{\overline{z}^{-1}},u_+^{z})^{-1}(z_0^{r}u_-^{z_0}(r)^{*}-z^{r}u_-^{\overline{z}^{-1}}(r)^{*}) .
 \end{align}

The first term can be estimated using Lemma \ref{LAP1}, Remark \ref{IQ} and the fact that $(1+|s|)(1+|r|)\geq (1+|s-r|)$. The second and fourth terms can be estimated using Lemma \ref{y10} and Lemma \ref{LAP1}. The third term is a consequence of Lemma \ref{LAP1} and Lemma \ref{y11}. 
The case $ s <  r $ can be similarly handled, the proof for the minus sign is analogous. 
\end{proof}

\begin{teo}[Limiting Absorption Principle]\label{LAPmain}  Suppose that $\|  V \|
  _{\rho} < \infty $ and $1/2<\alpha $,  with  $ \rho + 1/2 <  \alpha   $.  
 For every  compact set $ K^{\pm} \subset  \overline{\mathbb{C}}^\pm \setminus (\sigma_p(H ) \cup \{ -2, 2\}  )$  the next estimate holds: 
        \begin{align}\label{Ger2}
            \|T_{-\alpha }R^{\pm}_{H}(E)T_{-\alpha }-T_{-\alpha }R^{\pm}_{H}(E_0 )T_{-\alpha }\|\leq C(K^{\pm}, \alpha, \rho, \| V \|_{\min(\rho, 1)}) |r_\pm(E)-r_\pm (E_0)|^{   \rho },
        \end{align}
   for all  $ E , E_0\in K^{\pm}$. 
\end{teo}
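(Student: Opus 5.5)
The plan is to reduce the operator-norm estimate \eqref{Ger2} to the pointwise kernel estimate of Lemma \ref{y12}, using a Hilbert--Schmidt bound made possible by the weights $T_{-\alpha}$. I expect this to give the exponent exactly $\rho$ when $0<\rho\le 1$, which is the regime where $\min(\rho,1)=\rho$ and Theorem \ref{LAPmain} coincides with Theorem \ref{LAPmain2}. For $\rho>1$ I do not see how to reach the exponent $\rho$ as worded; see the last paragraph.

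\textbf{Step 1 (kernel of the weighted difference).} Fix $E,E_0\in K^{\pm}$ and write $z=r_\pm(E)$, $z_0=r_\pm(E_0)$. The operator $T_{-\alpha}\big(R^{\pm}_H(E)-R^{\pm}_H(E_0)\big)T_{-\alpha}$ acts through the kernel
\begin{align}
k_{s,r}:=(1+|s|)^{-\alpha}\Big([R^{\pm}_H(E)]_{s,r}-[R^{\pm}_H(E_0)]_{s,r}\Big)(1+|r|)^{-\alpha}.
\end{align}
This follows from the definition of $R^{\pm}_H$ and Remark \ref{RKsr}. The sum over $r$ converges absolutely for $u\in\ell^2$, because Proposition \ref{LAP2} bounds the kernel uniformly and $2\alpha>1$; this is the same Cauchy--Schwarz argument as in Theorem \ref{teoaco}. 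Lemma \ref{y12} then gives
\begin{align}
\|k_{s,r}\|\le C(K^{\pm},\rho,\|V\|_{\min(\rho,1)})\,|z-z_0|^{\min(\rho,1)}\,(1+|s|)^{\rho-\alpha}(1+|r|)^{\rho-\alpha}.
\end{align}

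\textbf{Step 2 (Hilbert--Schmidt/Schur bound).} For $u\in\ell^2(\mathbb{Z},\mathcal{M}_L)$, Cauchy--Schwarz in $r$ followed by summation in $s$ gives
\begin{align}
\Big\|\sum_r k_{s,r}u(r)\Big\|_{\ell^2_s}\le \Big(\sum_{s,r}\|k_{s,r}\|^2\Big)^{1/2}\|u\|_{\ell^2}.
\end{align}
Inserting the bound from Step 1, the double sum factorizes into $\big(\sum_n(1+|n|)^{2(\rho-\alpha)}\big)^2$. This is finite precisely because $\rho+1/2<\alpha$, that is, $2(\alpha-\rho)>1$. The operator norms on $\mathcal{M}_L$ are equivalent by Remark \ref{COP}, so the constants change harmlessly. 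The result is
\begin{align}
\|T_{-\alpha}R^{\pm}_H(E)T_{-\alpha}-T_{-\alpha}R^{\pm}_H(E_0)T_{-\alpha}\|\le C(K^{\pm},\alpha,\rho,\|V\|_{\min(\rho,1)})\,|z-z_0|^{\min(\rho,1)}.
\end{align}
For $\rho\le1$ this is exactly \eqref{Ger2}.

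\textbf{Step 3 (the case $\rho>1$, the main obstacle).} Here \eqref{Ger2} with exponent $\rho$ is strictly stronger than what Steps 1--2 give. Since $|z-z_0|$ is bounded on the compact set $r_\pm(K^\pm)$, we have $|z-z_0|^\rho\le C|z-z_0|$, but the inequality I would need runs the other way: the true rate $|z-z_0|^\rho$ is the smaller quantity. Lemma \ref{y12} is capped at exponent $1$, because Remark \ref{IQ} and the Lipschitz dependence of $z^{r-s}S^z(r-s)$ only produce a first power of $|z-z_0|$.

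To get more, the first thing I would try is a Taylor-type expansion. Differentiating the Volterra equations \eqref{jostsolvolterra} in $z$ requires moments $\|V\|_{j}$, and derivatives of the kernel cost factors of $(1+|s|)$ and $(1+|r|)$. Even so, a higher-order expansion only leaves a remainder of order $|z-z_0|^\rho$ after subtracting the Taylor terms; it does not bound the full difference itself by $|z-z_0|^{\rho}$. A genuine first-order term would therefore have to vanish, and in general it does not. For instance, the free part $z^{s-r}$ of the kernel already varies linearly in $z$.

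So for $\rho>1$ I expect that exponent $\rho$ cannot literally be achieved. The argument of Steps 1--2 proves the theorem with exponent $\min(\rho,1)$, which is the form stated in Theorem \ref{LAPmain2}. I would therefore treat the exponent in \eqref{Ger2} as $\min(\rho,1)$ unless some cancellation specific to $R^\pm_H$ can be found, and I would look for such a cancellation only after completing the $\rho\le1$ case.
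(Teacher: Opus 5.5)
Your Steps 1--2 are essentially the paper's proof. The paper also combines Lemma \ref{y12} with Cauchy--Schwarz in $r$ and summation in $s$, and uses $\rho+1/2<\alpha$ for the summability of $(1+|n|)^{2(\rho-\alpha)}$. Its final line likewise yields only $|r_+(E)-r_+(E_0)|^{2\min(\rho,1)}$, so you are right to read the exponent in \eqref{Ger2} as $\min(\rho,1)$, in agreement with Theorem \ref{LAPmain2}.

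Your Step 3 suspicion can be made rigorous. Take $\rho>1$ and a closed disk $K^{\pm}\subset\mathbb{C}^{\pm}$, where $r$ is Lipschitz. There the literal bound would force $\frac{d}{dE}T_{-\alpha}R_H(E)T_{-\alpha}=T_{-\alpha}R_H(E)^2T_{-\alpha}$ to vanish. That is impossible, since this operator is injective.
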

\begin{proof}

We take the plus sign (taking the minus sign does not change the argument). 
Let \( u \in \ell ^2(\mathbb{Z}, \mathcal{M}_L) \). Using Cauchy-Schwarz inequality and  Lemma \ref{y12} we obtain that

\begin{align}\label{y13}&\notag\sum_{r\in\mathbb{Z}}(1+|r|)^{-\alpha}(1+|s|)^{-\rho}\|([R_{H}(E)]_{s,r}-[R_{H}(E_0)]_{s,r})u(r)\|_{\mathcal{M}_L}\\\notag
&\leq \sum_{r\in\mathbb{Z}}(1+|r|)^{-\alpha + \rho  }\frac{\|[R_{H}(E)]_{s,r}-[R_{H}(E_0)]_{s,r}\|}{(1+|r|)^{ \rho}(1+|s|)^{\rho}}\|u(r)\|_{\mathcal{M}_L}\\
&\leq \left(\sum _{r\in \mathbb{Z}}(1+|r|)^{2(-\alpha + \rho) }\right)^{\frac{1}{2}}C(K^{\pm}, \rho, \| V\|_{\min(\rho, 1)})|r_+(E)-r_+(E_0)|^{\min(\rho, 1)}\|u\|_{\ell ^ 2}.
\end{align}

Therefore, using \eqref{y13} we have

\begin{align}&\notag \|(T_{-\alpha}R_{H}(E)T_{-\alpha}-T_{-\alpha}R_{H}(E_0)T_{-\alpha})u\|^{2}_{\ell ^{2}}\\\notag
&\leq\sum_{s\in\mathbb{Z}}(1+|s|)^{-2\alpha}\left(\sum_{r\in\mathbb{Z}}(1+|r|)^{-\alpha}\|([R_{H}(E)]_{s,r}-[R_{H}(E_0)]_{s,r})u(r)\|\right)^{2}\\\notag
&=\sum_{s\in\mathbb{Z}}(1+|s|)^{2(- \alpha +  \rho) }\left(\sum_{r\in\mathbb{Z}}(1+|r|)^{-\alpha}(1+|s|)^{-\rho}\|([R_{H}(E)]_{s,r}-[R_{H}(E_0)]_{s,r})u(r)\|\right)^{2}\\
&\leq C(K^{\pm }, \rho,\alpha, \| V \|_{ \min(\rho, 1)  } ) |r_+(E)-r_+(E_0)|^{2 \min(\rho, 1) }\|u\|_{\ell ^{2}}^{2}.
\end{align}

\end{proof}

\color{black}

 \section{Dispersive estimates}\label{dispersiveesimates}

\color{black}

The explicit formula for the resolvent kernel allows for an explicit derivation of the spectral measure of $H$. This result is summarized in the following lemma:
 
\begin{lema}
  Let \(\mathcal{E}_{H}(\omega)\) be the spectral measure of \(H\). 
For each \( z \in \mathbb{S}^1 \setminus \{-1, 1\} \), we define:
\begin{equation}\label{qwe}
   f_{s,r}(z)=
 \left\{ \begin{array}{lcc}
          \tilde{u}_{+}^{z}(s)T_{+}^{z} \tilde{u}_{-}^{z}(r)^{*}&   s\geq r \\
  \tilde{u}_- ^{z^{-1}}(s)T_-^{z}\tilde{u}_+^{z^{-1}}(r)^{*}&  s < r
             \end{array} ,
   \right.
   \end{equation}     
   where $T_\pm ^{z}$ is defined in  \eqref{eq:TR1} and $\tilde{u}^{z^{\pm 1}}$ are the functions defined in Definition \ref{utilde}.    
      
    For $-2<a<b<2$, the following formula holds true:
    \begin{align}
       [\mathcal{E}_{H}(a,b) ]_{s,r} =  \frac{1}{2 \pi i}\int_{a}^{b}  \frac{ 1 }{ z - z^{-1} }  \Big ( z^{|s-r|} f_{s, r}(z)  + z^{-|s-r|} f_{s, r}(z^{-1}) \Big )  dE ,
    \end{align}
$  z = r_+(E) .$
\end{lema}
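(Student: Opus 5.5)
The plan is to apply Stone's formula to the self-adjoint operator $H$ and evaluate it matrix element by matrix element, using the boundary values of Theorem \ref{Green2.1} in the form \eqref{eq:Green1}--\eqref{eq:Green2}. Since $\sigma_p(H)\cap[-2,2]=\emptyset$, the endpoints $a,b$ are not eigenvalues. Stone's formula then gives, weakly,
\begin{align}
\mathcal{E}_H(a,b)=\lim_{\epsilon\to 0^+}\frac{1}{2\pi i}\int_a^b \big(R_H(E+i\epsilon)-R_H(E-i\epsilon)\big)\,dE .
\end{align}
The sign convention $R_H(E)=(H-E)^{-1}$ is what produces the factor $\frac{1}{2\pi i}$ with this ordering. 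I will sandwich this with $\pi_s$ and $\pi_r^*$. Because $\pi_s,\pi_r^*$ are bounded and the limit holds strongly (indeed in operator norm after pairing with the fixed vectors $\pi_r^*(C)$ and $\pi_s^*(D)$), we get
$[\mathcal{E}_H(a,b)]_{s,r}=\lim_{\epsilon\to0^+}\frac{1}{2\pi i}\int_a^b([R_H(E+i\epsilon)]_{s,r}-[R_H(E-i\epsilon)]_{s,r})\,dE$.

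\textbf{Passing the limit inside the integral.} Take $K^\pm$ to be the compact set $\{E+i\epsilon: E\in[a,b],\ 0\le\pm\epsilon\le 1\}$. It avoids $\{-2,2\}$ and $\sigma_p(H)$, since eigenvalues are real and lie outside $[-2,2]$. Proposition \ref{LAP2} gives a uniform bound $\|[R_H(E\pm i\epsilon)]_{s,r}\|\le C(K^\pm)$, and Theorem \ref{Green2.1} gives pointwise convergence to $[R_H(E\pm i0)]_{s,r}$. Dominated convergence on $[a,b]$ then lets me replace the limit by the integral of $[R_H(E+i0)]_{s,r}-[R_H(E-i0)]_{s,r}$.

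\textbf{Identifying the integrand.} It remains to show
\begin{align}
[R_H(E+i0)]_{s,r}-[R_H(E-i0)]_{s,r}=\frac{1}{z-z^{-1}}\big(z^{|s-r|}f_{s,r}(z)+z^{-|s-r|}f_{s,r}(z^{-1})\big),\qquad z=r_+(E).
\end{align}

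For $s\ge r$, \eqref{eq:Green1} gives $[R_H(E+i0)]_{s,r}=\frac{z^{s-r}}{z-z^{-1}}\tilde u_+^z(s)T_+^z\tilde u_-^z(r)^*$, which is $\frac{z^{|s-r|}}{z-z^{-1}}f_{s,r}(z)$. For the minus sign, \eqref{eq:Green1} with the lower sign gives $\frac{z^{-(s-r)}}{z^{-1}-z}\tilde u_+^{z^{-1}}(s)T_+^{z^{-1}}\tilde u_-^{z^{-1}}(r)^*$, which is $-\frac{z^{-|s-r|}}{z-z^{-1}}f_{s,r}(z^{-1})$. Subtracting yields the claimed sum.

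For $s<r$ I use \eqref{eq:Green2} in the same way and match against the second line of \eqref{qwe}. This requires checking that the superscripts of $\tilde u_\mp$ and $T_-$ in \eqref{eq:Green2} agree with $f_{s,r}(z)$ and $f_{s,r}(z^{-1})$, using $\overline z=z^{-1}$ on $\mathbb{S}^1$.

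\textbf{Main obstacle.} I expect the hard part to be this bookkeeping: verifying that the $s<r$ formula from Theorem \ref{Green2.1} and Remark \eqref{eq:Green2} really yields $f_{s,r}(z^{\pm1})$ with the correct $T_-^{z^{\pm1}}$ and the correct sign of $z^{\pm1}-z^{\mp1}$. If a mismatch appears, I will rederive \eqref{eq:Green2} directly from \eqref{MN}, \eqref{eq:TR1} and \eqref{Tproperty}, or from the symmetry \eqref{eq:simetria}. A secondary point is justifying the strong form of Stone's formula at the matrix-element level, which follows because there is no point spectrum in $[a,b]$.
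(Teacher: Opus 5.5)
Your proposal is correct and follows essentially the same route as the paper. Both arguments apply Stone's formula to matrix elements. Both use the uniform bound of Proposition \ref{LAP2} on $\{E\pm i\epsilon : E\in[a,b],\,0\le \epsilon\le 1\}$, together with the pointwise limits of Theorem \ref{Green2.1}, to justify dominated convergence. Both then read off the jump from \eqref{eq:Green1}--\eqref{eq:Green2}.

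The $s<r$ bookkeeping you worried about matches the second line of \eqref{qwe} directly. The upper sign of \eqref{eq:Green2} is $\frac{z^{|s-r|}}{z-z^{-1}}f_{s,r}(z)$ and the lower sign is $-\frac{z^{-|s-r|}}{z-z^{-1}}f_{s,r}(z^{-1})$.

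The only cosmetic difference is that you use the endpoint-averaged form of Stone's formula plus the absence of eigenvalues at $a,b$. The paper instead uses the $\lim_{\delta\to0^+}$ form over $[a+\delta,b-\delta]$.
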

 
\begin{proof}
\label{teo:stone}

By Lemma \ref{LAP2}, the terms $[R_{H}(E)]_{s,r}$ are uniformly bounded with respect to $z$ in sets $ \{ E\in \mathbb{C}: a \leq \Re(E)\leq b, \hspace{0.2cm}  0 \leq   |\Im(E)|<\varepsilon \}$ for $-2<a<b<2$ and $\varepsilon >0$. By Stone's formula (see \cite{MR1009163}, p. 920, Theorem 1) and the dominated convergence theorem, we have, for all $A, B\in \mathcal{M}_L$, that

\begin{align}
 &\left<[ \mathcal{E}_{H}((a,b))  ]_{s,r}A,B\right>_{\mathcal{M}_{L}}=\left<\pi _s\mathcal{E}_{H}((a,b))\pi _r^{*}A,B\right>_{\mathcal{M}_{L}}= \left<\mathcal{E}_{H}((a,b))\pi _r^{*}A, \pi _s^{*}B\right>_{\ell ^{2}} \\\notag &=\lim _{\delta \to 0^{+}}   \lim _{\varepsilon \to 0^{+}}  \frac{1}{2\pi i}\int _{a+\delta }^{b-\delta } \left< \Big ( R_{H}(E +i\varepsilon )  -R_{H}(E -i\varepsilon )  \Big )\pi _r^{*}A, \pi _s^{*}B\right>_{\ell ^{2}} dE.  
 \\ \notag & =    \frac{1}{2\pi i}\int _{a }^{b } \left< \Big ( R_{H}(E +i 0 )  -R_{H}(E -i 0 )  \Big )\pi _r^{*}A, \pi _s^{*}B\right>_{\ell ^{2}} dE. 
  \end{align}
The desired result follows from \eqref{eq:Green1} and \eqref{eq:Green2} (here, $  z = r_+(E) $) :
\begin{align}
\Big [ R_{H}(E +i 0 )  -R_{H}(E -i 0 ) \Big ]_{s, r}= 
    \frac{ 1 }{ z - z^{-1} }  \Big ( z^{|s-r|} f_{s, r}(z)  + z^{-|s-r|} f_{s, r}(z^{-1}) \Big )  .
    \end{align}


\end{proof}

We recall that \( P_{ac} = \mathcal{E}_{H}([-2,2]) \), and \( \tilde{\mathcal{E}}_{H} := P_{ac} \mathcal{E}_{H} \).  
Using the fact that \(-2\) and \(2\) are not eigenvalues of \(H\) (see Theorem 4 in \cite{MR4760547}), we obtain the following result:

   \begin{cor}\label{caro} The next equality holds true:
    \begin{align}
    \label{eq:derivative}
    \\ &\notag \frac{d \left< \tilde{ \mathcal{E}}_{H}\pi _r^{*}A, \pi _s^{*}B\right>_{\ell ^{2}}}{dE} = \begin{cases}  \notag \frac{1}{2\pi i} \left<(  \frac{ 1 }{ z - z^{-1} }  \Big ( z^{|s-r|} f_{s, r}(z)  + z^{-|s-r|} f_{s, r}(z^{-1}) \Big ) )A, B\right>_{\mathcal{M}_L} , \:  & E \in (-2,2) , \\ 
    0 ,  & \text{otherwise}\end{cases}
    \end{align}
  for all $A,B\in \mathcal{M}_L$ and $ z = r_+(E) .$
\end{cor}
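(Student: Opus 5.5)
The plan is to combine the Stone-formula identity of the preceding lemma with the fact that $\tilde{\mathcal{E}}_H = P_{ac}\mathcal{E}_H$ is supported in $[-2,2]$. We argue separately inside and outside the interval.

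\emph{Outside the interval.} For $E\notin[-2,2]$ there is nothing to do beyond noting that $\tilde{\mathcal{E}}_H$ gives zero mass to $\mathbb{R}\setminus[-2,2]$. Indeed, the spectrum of $H$ there consists only of finitely many eigenvalues, and these do not belong to the absolutely continuous subspace. Hence $E\mapsto \langle \tilde{\mathcal{E}}_H((-\infty,E])\pi_r^*A,\pi_s^*B\rangle_{\ell^2}$ is locally constant on $\mathbb{R}\setminus[-2,2]$, and its derivative vanishes.

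\emph{Inside the interval.} For $-2<a<b<2$ we have $\tilde{\mathcal{E}}_H((a,b))=\mathcal{E}_H((a,b))$, because $(a,b)\subset[-2,2]$. The preceding lemma then gives
\[
\langle \tilde{\mathcal{E}}_H((a,b))\pi_r^*A,\pi_s^*B\rangle_{\ell^2}=\frac{1}{2\pi i}\int_a^b \langle g_{s,r}(E)A,B\rangle_{\mathcal{M}_L}\,dE,
\]
where $g_{s,r}(E)=\frac{1}{z-z^{-1}}\big(z^{|s-r|}f_{s,r}(z)+z^{-|s-r|}f_{s,r}(z^{-1})\big)$ and $z=r_+(E)$. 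By Theorem \ref{Green2.1}, $g_{s,r}(E)=[R_H(E+i0)-R_H(E-i0)]_{s,r}$. This is continuous in $E\in(-2,2)$, by the continuity of the Jost solutions (Lemma \ref{jostvolterra}), the invertibility of the Wronskians (Theorem \ref{Winvertible}), and the continuity of $r_+$ on $(-2,2)$. Since $\mathcal{E}_H(\{E\})=0$ for $E\in(-2,2)$ (there are no embedded eigenvalues), open and half-closed intervals give the same value. The distribution function is therefore an integral of a continuous integrand, and the fundamental theorem of calculus yields the stated derivative at every $E\in(-2,2)$.

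\emph{The endpoints.} The main subtle point is the behaviour at $\pm 2$, which is where the hypothesis that $\pm2$ are not eigenvalues (Theorem 4 of \cite{MR4760547}) enters. Because of it, $\mathcal{E}_H(\{\pm2\})=0$, so $\tilde{\mathcal{E}}_H$ carries no atoms at the edges. Consequently the measure of $[-2,2]$ is the limit of the measures of the intervals $(a,b)$ as $a\downarrow-2$, $b\uparrow 2$, with no jump. The derivative formula is claimed only on the open interval and on the complement of $[-2,2]$, so the two endpoints themselves need no separate treatment. This yields \eqref{eq:derivative}, read as the density of the absolutely continuous measure $\langle\tilde{\mathcal{E}}_H\pi_r^*A,\pi_s^*B\rangle$.
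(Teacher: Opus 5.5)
Your proposal is correct and follows the paper's own (very brief) justification. The paper obtains the corollary directly from the preceding Stone-formula lemma on intervals $(a,b)\subset(-2,2)$, together with the fact that $\pm 2$ are not eigenvalues, so that $\tilde{\mathcal{E}}_H=P_{ac}\mathcal{E}_H$ with $P_{ac}=\mathcal{E}_H([-2,2])$ has no atoms at the edges. One simplification: with this definition of $P_{ac}$, the vanishing outside $[-2,2]$ follows at once from $\tilde{\mathcal{E}}_H(\Omega)=\mathcal{E}_H(\Omega\cap[-2,2])$, so you do not need to invoke the structure of the discrete spectrum.
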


Corollary \ref{caro} and the fact that the essential spectrum of \( H \) is \([-2,2]\) verify that the projection onto the absolutely continuous subspace of \( H \) is \( P_{ac} \).

\begin{lema}
\label{evolutionkernel}
  The integration kernel for $e^{-itH}P_{ac}$ is given by
    \begin{align}\label{evolutionkernel}
         [ e^{-itH}P_{ac}]_{s,r}= \int_{-\pi}^{\pi} e^{-it2\cos k -i|s-r|k} f_{s,r}(e^{-ik}) dk.
    \end{align} 
    
\end{lema}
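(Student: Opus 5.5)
We will obtain the kernel of $e^{-itH}P_{ac}$ from the functional calculus and the spectral density of Corollary \ref{caro}, followed by the change of variables $E=2\cos k$.

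\textbf{Step 1: functional calculus.} For $A,B\in\mathcal{M}_L$ we write
\begin{align}
\left<[e^{-itH}P_{ac}]_{s,r}A,B\right>_{\mathcal{M}_L}=\left<e^{-itH}P_{ac}\pi_r^*A,\pi_s^*B\right>_{\ell^2}=\int_{[-2,2]} e^{-itE}\, d\left<\tilde{\mathcal{E}}_H(E)\pi_r^*A,\pi_s^*B\right>_{\ell^2}.
\end{align}
Since $\pm 2$ are not eigenvalues, the endpoints carry no mass. Corollary \ref{caro} states that the spectral measure is absolutely continuous on $(-2,2)$, with density
\begin{align}
\frac{1}{2\pi i}\,\frac{1}{z-z^{-1}}\Big(z^{|s-r|}f_{s,r}(z)+z^{-|s-r|}f_{s,r}(z^{-1})\Big),\qquad z=r_+(E).
\end{align}
The density is integrable. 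We use Lemma \ref{LAP2} on compact subsets of $(-2,2)$. Near the endpoints we use the change of variables below: the Jacobian $dE$ cancels the singular factor $(z-z^{-1})^{-1}$, so no separate bound near $\pm2$ is needed.

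\textbf{Step 2: change of variables.} Put $z=r_+(E)=e^{-ik}$ with $k\in(0,\pi)$, so $E=2\cos k$ and $dE=-2\sin k\,dk$. Since $z-z^{-1}=-2i\sin k$, we get
\begin{align}
\frac{dE}{2\pi i\,(z-z^{-1})}=\frac{-2\sin k\,dk}{2\pi i(-2i\sin k)}=\frac{dk}{2\pi}.
\end{align}
The orientation reversal $E:-2\to2$ versus $k:\pi\to0$ must be tracked carefully, since it fixes the overall sign.

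The term with $z^{|s-r|}f_{s,r}(z)=e^{-i|s-r|k}f_{s,r}(e^{-ik})$ gives an integral over $k\in(0,\pi)$. For the term with $z^{-1}$ we substitute $k\mapsto -k$: then $z^{-1}=e^{ik}$ becomes $e^{-ik}$, and $z^{-|s-r|}f_{s,r}(z^{-1})$ becomes $e^{-i|s-r|k}f_{s,r}(e^{-ik})$. Because $\cos$ is even, this term gives the same integrand over $k\in(-\pi,0)$.

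Adding the two pieces yields a single integral over $[-\pi,\pi]$ of $e^{-it2\cos k-i|s-r|k}f_{s,r}(e^{-ik})$, up to the normalizing constant. We expect that constant to be absorbed into the paper's normalization, or else to appear as $1/2\pi$. Since $A,B$ are arbitrary, the identity passes from matrix elements to the kernel itself.

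\textbf{Step 3: justification, the main obstacle.} We must check that $f_{s,r}(e^{-ik})$ is integrable on $[-\pi,\pi]$, including near $k=0,\pm\pi$, where $T^z_\pm$ may fail to be defined, and that the limits in Stone's formula may be exchanged with the integral. For integrability we plan to use the Wiener algebra bounds of Proposition \ref{Abound} on $\tilde u^{z^{\pm1}}_\pm$, which make these factors continuous and bounded on $\mathbb{S}^1$. For $T^z_\pm$ we use $T_+^z=(M_+^z)^{-1}$ together with \eqref{MN}, which give $(z-z^{-1})\,T^z_+=i\,W(u_-^z,u_+^z)^{-1}$ up to a constant. The cancellation in Step 2 absorbs the factor $(z-z^{-1})^{-1}$ via the Jacobian, but the behaviour of $T^z$ itself as $z\to\pm1$ is still needed. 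We plan to assume boundedness of $T^z$ near $\pm1$ (the generic case); otherwise we will argue via monotone convergence over $[a,b]\uparrow(-2,2)$, using that the spectral measure of $(-2,2)$ is finite. This endpoint integrability is the delicate point.
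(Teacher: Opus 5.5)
Your route is the paper's: spectral theorem with the density from Corollary~\ref{caro}, the substitution $E=2\cos k$, $z=r_+(E)=e^{-ik}$, and the reflection $k\mapsto -k$ of the $z^{-1}$-term to cover $[-\pi,0]$. The gap is in the one computation this lemma actually consists of. Your Jacobian identity has a sign error. Since $2\pi i\,(z-z^{-1})=2\pi i(-2i\sin k)=4\pi\sin k$, one has
\[
\frac{dE}{2\pi i\,(z-z^{-1})}=\frac{-2\sin k\,dk}{4\pi\sin k}=-\frac{dk}{2\pi},
\]
not $+\frac{dk}{2\pi}$. The orientation reversal ($E$ from $-2$ to $2$ corresponds to $k$ from $\pi$ to $0$) then turns this into $+\frac{1}{2\pi}\int_0^{\pi}$. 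With your sign, tracking the orientation as you announce would give $-\frac{1}{2\pi}$. That is wrong: for $V=0$ one has $f_{s,r}\equiv I$, and at $t=0$, $s=r$ the kernel must be $[P_{ac}]_{s,s}=I$. The same check shows the constant cannot be ``absorbed into the paper's normalization'', because Stone's formula and the density in Corollary~\ref{caro} fix it completely. The correct identity is $[e^{-itH}P_{ac}]_{s,r}=\frac{1}{2\pi}\int_{-\pi}^{\pi}e^{-it2\cos k-i|s-r|k}f_{s,r}(e^{-ik})\,dk$. This is what the paper's proof actually derives; the displayed statement drops the $\frac{1}{2\pi}$. You need to commit to this value, with the correct sign bookkeeping, rather than leave it open.

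Your Step~3 goes beyond the paper, which does not discuss the endpoints at all. Of your two options, assuming genericity adds a hypothesis the lemma does not have. The fallback is the right argument and needs no genericity. The measure $\mu=\langle\tilde{\mathcal{E}}_H\pi_r^*A,\pi_s^*B\rangle$ is a finite complex measure with no atoms at $\pm2$. Hence $\int_{(-2,2)}e^{-itE}\,d\mu=\lim_{[a,b]\uparrow(-2,2)}\int_a^b e^{-itE}\,d\mu$, by countable additivity or dominated convergence with respect to $|\mu|$. Each $[a,b]$ maps to a $k$-set of the form $[-k_a,-k_b]\cup[k_b,k_a]$, which is symmetric under $k\mapsto-k$. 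So the $k$-integral arises as a symmetric limit excising $0,\pm\pi$. It is a genuine Lebesgue integral whenever $f_{s,r}$ is bounded, for instance in the generic case, where $f_{s,r}\in\mathcal{A}_{\mathcal{M}_L}$.

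Your relation for $T$ is also inverted. From \eqref{MN}, $T_+^z=-i(z-z^{-1})W(u_-^z,u_+^z)^{-1}$, so the factor $z-z^{-1}$ multiplies $W^{-1}$ inside $T_+^z$. The Jacobian cancellation is between $dE$ and the explicit prefactor $(z-z^{-1})^{-1}$ in the density, and it does not control $T^z$.
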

\begin{proof}
By the spectral Theorem and (\ref{eq:derivative}), we have  that 
\begin{align}\label{timeevolution1}
\\ & \notag [  e^{-itH}P_{ac}]_{s,r} = \frac{1}{2 \pi i}  \int_{-2}^2 e^{-it E } \frac{ 1 }{ z - z^{-1} }  \Big ( z^{|s-r|} f_{s, r}(z)  + z^{-|s-r|} f_{s, r}(z^{-1}) \Big ) dE
\end{align} where $z=r_+(E)$.
By \eqref{r_+}, if $E\in (-2, 2),$   $r_+(E)= e^{-ik } = z$ for $E = 2\cos (k), k\in [0,\pi ]$, then, equation \eqref{timeevolution1} implies:

\begin{align}
    [e^{-itH}P_{ac}]_{s,r} &= -\frac{1}{2\pi i}\int _{ 0}^{\pi} \frac{ e^{-it2\cos k}}{-2i\sin k}\left[ e^{- i| s-r |k }f_{s,r}(e^{-ik }) + e^{ i|s-r|k }f_{s,r}(e^{ik })\right](-2\sin k )dk \\
         & \notag =  \frac{1}{2\pi} \left( \int_{0}^{\pi } e^{-it2\cos k - i|s-r|k} f_{s,r}(e^{-ik}) dk + \int_{-\pi}^{0} e^{-it2\cos k - i|s-r|k} f_{s,r}(e^{-ik}) dk \right)  \\ 
         & \notag =  \frac{1}{2\pi}\int_{-\pi}^{\pi} e^{-it2\cos k - i|s-r|k} f_{s,r}(e^{-ik}) dk.
\end{align}

\end{proof}
 
 \begin{defi}\label{generic}
     The potential $V$ is called generic provided that $W(u_-^{z}, u_+^{z})$ and $W(u_+^{z^{-1}}, u_-^{z^{-1}})$ are invertible for every $z \in \mathbb{S}^{1}$.
 \end{defi}
\color{black}

   \begin{lema} \label{y3}If  $V$ is generic (in the sense of Definition \ref{generic}), then the functions $\mathbb{S}^{1}\ni z\mapsto T_{\pm}^{z}$ and $\mathbb{S}^{1}\ni z\mapsto R_{\pm}^{z}$ are elements of the Wiener algebra $  \mathcal{A}_{\mathcal{M}_L} $.
   \end{lema}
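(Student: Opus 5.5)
The plan is to express $T^z_\pm$ and $R^z_\pm$ through Wronskians of Jost solutions evaluated at a fixed site. Then I would use the Wiener-algebra bounds of Proposition \ref{Abound} together with the matrix Wiener lemma, Theorem \ref{thm:wienerG}.

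By \eqref{MN} and \eqref{eq:TR1}, for $z\in\mathbb{S}^1\setminus\{-1,1\}$ we have
\begin{align}
T^z_{+}=(\nu^z)^{-1}W(u^z_-,u^z_+)^{-1},\qquad R^z_+=\nu^z W(u^{z^{-1}}_-,u^z_+)\,(\nu^z)^{-1}W(u^z_-,u^z_+)^{-1}=W(u^{z^{-1}}_-,u^z_+)\,W(u^z_-,u^z_+)^{-1},
\end{align}
and similarly for the minus sign. The ratio $R^z_\pm$ contains no factor $\nu^z$. The only singular scalar factor is $(\nu^z)^{-1}=-i(z-z^{-1})$, which is a trigonometric polynomial and hence lies in $\mathcal{A}_{\mathcal{M}_L}$. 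So it suffices to prove that $z\mapsto W(u^z_-,u^z_+)$, $z\mapsto W(u^{z^{-1}}_-,u^z_+)$ and their minus-sign analogues belong to $\mathcal{A}_{\mathcal{M}_L}$, and that the first and its analogue have inverses in $\mathcal{A}_{\mathcal{M}_L}$.

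\textbf{Membership of the Wronskians.} Since the Wronskian is independent of $n$ (Remark \ref{ND}), evaluate it at $n=0$:
\begin{align}
W(u^z_-,u^z_+)=i\big(u_-^z(1)^*u_+^z(0)-u_-^z(0)^*u_+^z(1)\big).
\end{align}
On $\mathbb{S}^1$ we have $\bar z=z^{-1}$. Hence if $f(z)=\sum_m a_m z^m$ lies in $\mathcal{A}_{\mathcal{M}_L}$, then so does $f(z)^*=\sum_m a_m^* z^{-m}$, with the same norm. By Proposition \ref{Abound}, each of $u^{z^{\pm1}}_\pm(n)$ at $n=0,1$ lies in $\mathcal{A}_{\mathcal{M}_L}$. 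The same holds after replacing $z$ by $z^{-1}$, since this is a reflection of Fourier coefficients. Because $\mathcal{A}_{\mathcal{M}_L}$ is an algebra, both Wronskians and their minus-sign counterparts lie in $\mathcal{A}_{\mathcal{M}_L}$.

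\textbf{Inversion.} Genericity (Definition \ref{generic}) says $\det W(u^z_-,u^z_+)\neq0$ for every $z\in\mathbb{S}^1$, including $z=\pm1$. Theorem \ref{thm:wienerG} then gives $W(u^z_-,u^z_+)^{-1}\in\mathcal{A}_{\mathcal{M}_L}$, and likewise for $W(u^{z^{-1}}_+,u^{z^{-1}}_-)^{-1}$. Multiplying gives $T^z_\pm,R^z_\pm\in\mathcal{A}_{\mathcal{M}_L}$. This a priori holds on $\mathbb{S}^1\setminus\{\pm1\}$. The formulas above extend continuously to all of $\mathbb{S}^1$, and this extension is the element of the algebra.

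\textbf{Main obstacle.} The delicate point is the behaviour at $z=\pm1$, where $\nu^z$ blows up and \eqref{MN} was derived only away from these points. I would handle it by taking as the definition of $T^z_\pm,R^z_\pm$ the expressions above. These contain $\nu^z$ only through the polynomial $(\nu^z)^{-1}$, and genericity guarantees that the Wronskians being inverted stay invertible at $\pm1$. A secondary check is the index bookkeeping in the minus-sign formulas: the Wronskian $W(u^{z^{-1}}_+,u^{z^{-1}}_-)$ appearing in $M^z_-$ must correspond, after the substitution $z\mapsto z^{-1}$ and conjugation, to a generic Wronskian. This would use \eqref{symw} and \eqref{Tproperty}.
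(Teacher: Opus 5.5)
Your proposal is correct and follows the paper's proof essentially step for step. You write $T^z_\pm$ and $R^z_\pm$ via \eqref{MN} and \eqref{eq:TR1} as products of $-i(z-z^{-1})$, the Wronskians $W(u^z_-,u^z_+)$ and $W(u^{z^{-1}}_-,u^z_+)$ (and their minus-sign analogues), and the inverse of the generic Wronskian; you get membership from Proposition \ref{Abound} and the algebra property, and you invert with Theorem \ref{thm:wienerG}.

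Your extra remarks only make explicit what the paper leaves implicit: on $\mathbb{S}^1$ the maps $f\mapsto f^*$ and $f(z)\mapsto f(z^{-1})$ preserve $\mathcal{A}_{\mathcal{M}_L}$, and the Wronskian expressions give the continuous extension to $z=\pm1$. The minus-sign "bookkeeping" you flag is unnecessary, since Definition \ref{generic} already assumes $W(u^{z^{-1}}_+,u^{z^{-1}}_-)$ is invertible.
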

   \begin{proof}
       By Proposition \ref{Abound}, for every $ s \in \mathbb{Z} $, the functions $  \mathbb{S}^{1}\ni z \mapsto u^{z^{\pm 1}}_{\pm}(s)$
(see Definition \ref{jost}) belong to the Wiener Algebra $   \mathcal{A}_{\mathcal{M}_L}  $ (see \eqref{eq:WM}). Then, due to the algebra structure of \(  \mathcal{A}_{\mathcal{M}_L}  \) and the Definition of Wronskian (Def. \ref{wronskidefi}), it follows that
\begin{align}\label{w}
    W(u_-^{z}, u_+^{z})\in  \mathcal{A}_{\mathcal{M}_L} , \hspace{1cm}
   W(u_- ^{z^{-1}}, u_+ ^{z})\in  \mathcal{A}_{\mathcal{M}_L} .
\end{align}
Since \( W(u_-^{z}, u_+^{z})\in  \mathcal{A}_{\mathcal{M}_L}  \), if it is invertible for all \( z\in \mathbb{S}^{1} \), then by Theorem \ref{thm:wienerG} it follows that  
\begin{align}\label{yy}
W(u_-^{z}, u_+^{z})^{-1}\in \mathcal{A}_{\mathcal{M}_L} .
\end{align}
Therefore, from \eqref{nu2}, \eqref{MN}, \eqref{eq:TR1},  \eqref{yy}, and the 
 fact that $\mathcal{A}_{\mathcal{M}_L}$ is an algebra, it follows from that 
\begin{align}
    T_{+}^{z}=(M_+^{z})^{-1}= -i (z - z^{-1}) W(u_-^{z}, u_+^{z})^{-1}\in  \mathcal{A}_{\mathcal{M}_L} .
\end{align}

On the other hand, from \eqref{eq:TR1}, \eqref{w}, and \eqref{yy} it follows that 
\begin{align}
    R_{+}^{z}=-N_{+}^{z} (M_+^{z})^{-1}= \nu^z W(u_- ^{z^{-1}}, u_+ ^{z}) (\nu^z)^{-1}  W(u_-^{z}, u_+^{z})^{-1}
    \in  \mathcal{A}_{\mathcal{M}_L} .
\end{align}

The proofs for the coefficients \( T_{-}^{z} \) and \( R_{-}^{z} \) are similar.
   \end{proof}
   
     We can estimate \eqref{evolutionkernel} using Theorem \ref{VanderWiener}. For this we must first prove that $f_{s,r}\in  \mathcal{A}_{\mathcal{M}_{L}}$ and it is uniformly bounded with respect to $s$ and $r$. This analysis is contained in the following lemma, which is a direct consequence of many estimates we present above.  For completeness and the convenience of the readers,  we include the proof (we follow the arguments in \cite{MR3433284}, where the scalar case is addressed).  
 
    \begin{lema} \label{ubound}
       There is a constant $ C $ such that 
      \begin{align}\sup_{s, r} \lVert f_{s, r} \rVert _{\mathcal{A}_{\mathcal{M}_L}}< C . \end{align}
    \end{lema}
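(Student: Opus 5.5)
The plan is to use the Banach algebra property of $\mathcal{A}_{\mathcal{M}_L}$: each $f_{s,r}$ is a product of three Wiener-algebra elements, so its norm is at most the product of the three norms. For $s\geq r$ we have $f_{s,r}(z)=\tilde u_+^{z}(s)\,T_+^{z}\,\tilde u_-^{z}(r)^{*}$. For $s<r$ we have $f_{s,r}(z)=\tilde u_-^{z^{-1}}(s)\,T_-^{z}\,\tilde u_+^{z^{-1}}(r)^{*}$. Genericity enters only through Lemma \ref{y3}, which gives $T_\pm\in\mathcal{A}_{\mathcal{M}_L}$ with some fixed norm $\|T_\pm\|_{\mathcal{A}_{\mathcal{M}_L}}<\infty$. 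Two elementary facts are used throughout. The map $z\mapsto z^{-1}=\overline z$ on $\mathbb{S}^1$ corresponds to $k\mapsto -k$, which preserves the Wiener norm. Taking the pointwise adjoint $g\mapsto g^*$ sends $\sum a_m e^{imk}$ to $\sum a_m^* e^{-imk}$ and also preserves the norm. So it remains to bound the Jost factors uniformly in $s,r$.

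The difficulty is that Proposition \ref{Abound} gives $\|\tilde u_\pm^{z^{\pm1}}(n)\|_{\mathcal{A}_{\mathcal{M}_L}}\le L+\mathbf b_\pm(1+e^{\mp\mathbf c_\pm n})$. This is bounded only on a half line: $n\ge N$ for $\tilde u_+$ and $n\le N$ for $\tilde u_-$ (Corollary with constants $\mathbf A^\pm_N$). I would split according to the signs of $s$ and $r$, with $N=0$.

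\emph{Case $s\ge r\ge 0$ or $0\ge s\ge r$ (same sign).} For $s\ge r\ge0$, the factor $\tilde u_+(s)$ is bounded by $\mathbf A^+_0$, but $\tilde u_-(r)$ with $r\ge0$ is on the bad half-line. In that case I would replace $\tilde u_-^{z}(r)$ using the scattering relation \eqref{TReq}. Evaluating \eqref{TReq} at $r$ and multiplying by the appropriate powers of $z$ gives
\begin{align*}
\tilde u_-^{z}(r)=\tilde u_+^{z}(r)T_+^{z}+z^{2r}\,\tilde u_-^{z^{-1}}(r)R_+^{z}.
\end{align*}
This still contains $\tilde u_-$. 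So I would instead use the second relation $u_-^{z^{-1}}T_-^z=u_+^{z^{-1}}-u_+^zR_-^z$. It expresses $\tilde u_-^{z^{-1}}(r)T_-^{z}$ as $\tilde u_+^{z^{-1}}(r)-z^{2r}\tilde u_+^{z}(r)R_-^z$, where only $\tilde u_+$ appears at $r\ge0$. The factor $z^{2r}$ has Wiener norm $1$, and $R_-\in\mathcal{A}_{\mathcal{M}_L}$ by Lemma \ref{y3}. Combining this with \eqref{Tproperty}, $(T_+^{\overline z})^*=T_-^z$, one rewrites $T_+^{z}\tilde u_-^{z}(r)^*$ as the adjoint of $\tilde u_-^{\overline z}(r)\,T_-^{\overline z}$. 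Here $\overline z=z^{-1}$, so this is the adjoint of a combination of $\tilde u_+(r)$'s, bounded by $\mathbf A^+_0(1+\|R_-\|_{\mathcal{A}_{\mathcal{M}_L}})$. The case $0\ge s\ge r$ is symmetric: now $\tilde u_-(r)$ is good, and $\tilde u_+^z(s)T_+^z$ is rewritten via the first relation in \eqref{TReq} as $\tilde u_-^z(s)-z^{-2s}\tilde u_-^{z^{-1}}(s)R_+^z$, using only $\tilde u_-$ at $s\le0$.

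\emph{Case $s\ge0\ge r$.} Both factors are on their good half-lines and $T_+$ is simply bounded. The case $s<r$ is handled identically, with the roles of $\pm$ exchanged: for $s<0<r$ everything is directly bounded, and for same-sign $s<r$ one uses \eqref{TReq} as above. The constant is then of the form $C(\mathbf A_0^\pm,\|T_\pm\|_{\mathcal{A}_{\mathcal{M}_L}},\|R_\pm\|_{\mathcal{A}_{\mathcal{M}_L}})$, independent of $s,r$.

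The main obstacle is this same-sign algebra. Because of non-commutativity, the identity \eqref{TReq} must be placed on the correct side of $T_\pm$ and combined with \eqref{Tproperty} so that the transmission matrix is absorbed exactly and no inverse Wronskian appears uncontrolled. I would first verify the rewritten expression $\tilde u_-^{\overline z}(r)T_-^{\overline z}=\tilde u_+^{z}(r)-z^{-2r}\tilde u_+^{z^{-1}}(r)R_-^{\overline z}$ carefully, tracking the power of $z$, before assembling the estimate.
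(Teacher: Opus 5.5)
Your proposal is correct and follows essentially the paper's proof: the same split into $r\le 0\le s$, $r\le s\le 0$, $0\le r\le s$ (mirrored for $s<r$), using the first relation of \eqref{TReq} to absorb $T_+^z$ into $\tilde u_-$'s when $s\le 0$, and the second relation together with \eqref{Tproperty} to get $T_+^{z}\tilde u_-^{z}(r)^*=\tilde u_+^{z}(r)^*-z^{2r}(R_-^{z^{-1}})^*\tilde u_+^{z^{-1}}(r)^*$ when $r\ge 0$. The only slip is notational: the factor whose adjoint you take is $\tilde u_-^{z}(r)\,T_-^{\overline z}$ (obtained by substituting $z\mapsto\overline z$ in $\tilde u_-^{z^{-1}}$), not $\tilde u_-^{\overline z}(r)\,T_-^{\overline z}$, and your right-hand side $\tilde u_+^{z}(r)-z^{-2r}\tilde u_+^{z^{-1}}(r)R_-^{\overline z}$ is the correct expression for that factor.
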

    \begin{proof} 
We only prove the statement for $ r \leq s $. In the circumstance that $  r > s $, the arguments are analogous. We study separately the cases   
 $    r \leq 0 \leq s $ , 
     $  r \leq s \leq 0 $ , 
     $  0 \leq r \leq s  .  $

{\textbf Case 1.     $ r \leq 0 \leq s $.} 
By (\ref{one}) and Lemma \ref{y3}, we have (for $ z \in \mathbb{S}^1 $) that
\begin{equation}
     \| f_{s,r} \|_{\mathcal{A}_{\mathcal{M}_L}} \leq \|\tilde{u}_+ ^{z}(s)\|_{\mathcal{A}_{\mathcal{M}_L}}\|T_+^{z}\|_ {\mathcal{A}_{\mathcal{M}_L}}\|\tilde{u}_-^{z}(r)^{*}\|_{\mathcal{A}_{\mathcal{M}_L}} 
\end{equation}
is uniformly bounded with respect to \( s \) and \( r \).

{\textbf Case 2.     $ r \leq s \leq 0  $.} By Definition \ref{utilde} and Equation (\ref{TReq}), we have
\begin{align}
    \tilde{u}_+^{z}(s) T_+^{z} 
    \notag = \tilde{u}_-^{z}(s) - z^{-2s} \tilde{u}_-^{z^{-1}}(s) R_+^{z},
\end{align}
and therefore
\begin{align}
    f_{s,r}(z) = \tilde{u}_+^{z}(s) T_+^{z} \tilde{u}_-^{z}(r)^* = \left( \tilde{u}_-^{z}(s) - z^{-2s} \tilde{u}_-^{z^{-1}}(s) R_+^{z}\right) \tilde{u}_-^{z}(r)^*.
\end{align} Using  that $\mathbb{S}^{1}\ni z\mapsto z^{-2s}$ has norm $L$ in the Wiener algebra (see \eqref{eq:WM}), by   (\ref{one}) and Lemma \ref{y3},  we have that
\begin{equation}
     \| f_{s,r} \|_{\mathcal{A}_{L}} \leq (\|\tilde{u}_-^{z}(s)\|_{\mathcal{A}_{\mathcal{M}_L}} + L \| \tilde{u}_-^{z^{-1}}(s) \|_{\mathcal{A}_{\mathcal{M}_L}}\|R_+^{z} \|_{\mathcal{A}_{\mathcal{M}_L}} )\| \tilde{u}_-^{z}(r)^*\|_{\mathcal{A}_{\mathcal{M}_L}}
\end{equation}
is uniformly bounded with respect to \( s \) and \( r \).

\textbf{Case 3. \( 0 \leq r \leq s \).} By Equation (\ref{TReq}), \eqref{Tproperty} , we have 
\begin{align}
    T_+^{z} u_-^{z}(r)^* = (T_-^{z^{-1}})^* u_-^{z}(r)^* = u_+^{z}(r)^* - (R_-^{z^{-1}})^* u_+^{z^{-1}}(r)^*,
\end{align}
and therefore
\begin{align}
    T_+^{z} \tilde{u}_-^{z}(r)^*  
     = z^{r} \left( z^{-r} \tilde{u}_+^{z}(r)^* - z^{r} (R_-^{z^{-1}})^* \tilde{u}_+^{z^{-1}}(r)^* \right) 
     = \tilde{u}_+^{z}(r)^* - z^{2r} (R_-^{z^{-1}})^* \tilde{u}_+^{z^{-1}}(r)^*.
\end{align}
  Then
  \begin{align}
      f_{s,r}(z)=\tilde{u}_{+}^{z}(s)T_{+}^{z}\tilde{u}_{-}^{z}(r)^{*}=\tilde{u}_{+}^{z}(s)\big(\tilde{u}_+^{z}(r)^* - z^{2r} (R_-^{z^{-1}})^* \tilde{u}_+^{z^{-1}}(r)^*\big).
  \end{align}
  By equation \eqref{one} and Lemma \ref{y3}, and using that $\mathbb{S}^{1}\ni z\mapsto z^{2r}$ has norm $L$ in the Wiener algebra (see \eqref{eq:WM}),  we have that  
  \begin{equation}
      \|f_{s,r}\|_{\mathcal{A}_{\mathcal{M}_L}}\leq \|\tilde{u}_{+}^{z}(s)\|_{\mathcal{A}_{\mathcal{M}_L}} \big(\|\tilde{u}_{+}^{z}(r)^{*}\|_{\mathcal{A}_{\mathcal{M}_L}}+ L \|(R_{-}^{z^{-1}})^{*}\|_{\mathcal{A}_{\mathcal{M}_L}} \|\tilde{u}_{+}^{z^{-1}]}(r)^{*}\|_{\mathcal{A}_{\mathcal{M}_L}}\big)
  \end{equation}
  is uniformly bounded respectively to $s$ and $r$.

\end{proof}

\begin{teo}\label{EST}
    The following estimate holds true: 
    $$ \|e^{-itH}P_{ac}\|_{\ell^{1} \to \ell^{\infty}} = O(t^{-\frac{1}{3}}), \quad t \to \infty , $$
    where we recall that \( P_{ac} \) denotes the projection in \( \ell^{2}(\mathbb{Z}, \mathcal{M}_{L}) \) onto the absolutely continuous subspace of \( H \).
\end{teo}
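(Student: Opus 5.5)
The plan is to estimate the kernel of $e^{-itH}P_{ac}$ pointwise, uniformly in $s,r$, and then pass to the $\ell^1\to\ell^\infty$ norm. If $u\in\ell^1(\mathbb{Z},\mathcal{M}_L)$, then $\|(e^{-itH}P_{ac}u)(s)\|_{\mathcal{M}_L}\le \sum_r \|[e^{-itH}P_{ac}]_{s,r}\|\,\|u(r)\|_{\mathcal{M}_L}$. Here the kernel entries act as left multiplication operators on $\mathcal{M}_L$, and by Remark \ref{COP} their operator norm is comparable to their matrix norm. Hence
\[
\|e^{-itH}P_{ac}\|_{\ell^1\to\ell^\infty}\le C\sup_{s,r}\|[e^{-itH}P_{ac}]_{s,r}\|.
\]
It therefore suffices to bound the integral in Lemma \ref{evolutionkernel} by $C(1+|t|)^{-1/3}$, uniformly in $s,r$. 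For $t\le 1$ this bound is trivial, since $\|f_{s,r}(e^{-ik})\|\le\|f_{s,r}\|_{\mathcal{A}_{\mathcal{M}_L}}\le C$ by Lemma \ref{ubound}.

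For large $t$ I would write the kernel as
\[
\frac{1}{2\pi}\int_{-\pi}^{\pi} e^{it\phi_v(k)}\, g_{s,r}(k)\,dk,\qquad \phi_v(k)=-2\cos k - v k,\quad v=\frac{|s-r|}{t},\quad g_{s,r}(k)=f_{s,r}(e^{-ik}).
\]
The function $g_{s,r}$ lies in $\mathcal{A}_{\mathcal{M}_L}$ with the same norm as $f_{s,r}$, because $k\mapsto -k$ only reindexes the Fourier coefficients. The term $-vk$ is linear in $k$, so its contribution is absorbed in the same way as in the proof of Theorem \ref{VanderWiener}: the integral splits as $\sum_n a_n\int e^{it(-2\cos k + (n/t)k - vk)}dk$, and the linear part affects neither the second nor the third derivative. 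So it suffices to apply Theorem \ref{VanderWiener} with phase $\psi(k)=-2\cos k$ on subintervals, with a constant independent of the linear shift and of the interval.

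Since $\psi''(k)=2\cos k$ and $\psi'''(k)=-2\sin k$, we have $|\psi''|^2+|\psi'''|^2=4$. I would split $[-\pi,\pi]$ into finitely many intervals. On those where $|\cos k|\ge 1/\sqrt2$, use $l=2$ with $\delta=\sqrt2$. On those where $|\sin k|\ge 1/\sqrt2$, use $l=3$ with $\delta=\sqrt2$. Theorem \ref{VanderWiener} then gives bounds $C t^{-1/2}\|g_{s,r}\|_{\mathcal{A}}$ and $C t^{-1/3}\|g_{s,r}\|_{\mathcal{A}}$ respectively. Summing over the finitely many pieces and applying Lemma \ref{ubound} yields $\sup_{s,r}\|[e^{-itH}P_{ac}]_{s,r}\|\le C t^{-1/3}$ for $t\ge1$, which combined with the small-$t$ bound gives the theorem.

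The only delicate point is already absorbed in Lemma \ref{ubound}: the uniform Wiener-algebra bound on $f_{s,r}$, which depends on genericity through Lemma \ref{y3}. Given that lemma, the main thing to check is the claim in Theorem \ref{VanderWiener} that the constant is uniform in $n/t$ and in $v$. This holds because the classical Van der Corput constant for $l\ge2$ depends only on $l$ and $\delta$, not on lower-order terms of the phase, and adding linear terms leaves $\phi^{(l)}$ unchanged for $l\ge 2$.
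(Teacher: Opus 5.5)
Your proposal is correct and follows essentially the same route as the paper: a bound on the kernel uniform in $(s,r)$, obtained from the Wiener-algebra Van der Corput lemma (Theorem \ref{VanderWiener}) on a finite partition of $[-\pi,\pi]$ into pieces where $|2\cos k|$ or $|2\sin k|$ is bounded below, combined with Lemma \ref{ubound} and then summed against $u\in\ell^1$. Your additional remarks (the trivial bound for small $t$, and the fact that the linear term $vk$ does not affect the constant) only make explicit points that the paper leaves implicit.
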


\begin{proof}
    By  \eqref{evolutionkernel}, we have that
    \begin{align}
         [e^{-itH}P_{ac}]_{s,r}=\int_{-\pi}^{\pi} e^{-it\phi _a (k )} f_{s,r}(e^{-ik}) dk, 
    \end{align} 
  where  
    $\phi _a (k ):=2\cos k +ak $ for $a= \frac{|s-r|}{t} $. We  split  $[-\pi, \pi]$ as bellow 
    $$[-\pi ,\pi ]=\left[-\pi, -\dfrac{3\pi}{4}\right]\cup \left[-\frac{3\pi }{4}, -\frac{\pi}{4}\right]\cup \left[-\frac{\pi}{4},\frac{\pi}{4}\right]\cup \left[\frac{\pi }{4}, \dfrac{3\pi}{4}\right]\cup \left[\frac{3\pi }{4}, \pi \right].$$ 
    Notice that in the intervals  $\left[-\frac{3\pi }{4}, -\frac{\pi}{4}\right]$ and $\left[\frac{\pi }{4},\frac{3\pi}{4}\right]$, $|\phi _{a} ^{\prime \prime \prime }(k )|=|2\sin k |\geq 1$, and in the intervals $[-\pi, -\frac{3\pi }{4}]$, $\left[\frac{3\pi }{4}, \pi \right]$ and $\left[-\frac{\pi }{4}, \frac{\pi}{4}\right]$, $|\phi _{a} ^{\prime \prime }(k )|=|2\cos k |\geq 1$ . Then, we use Theorem \ref{VanderWiener}  for $l=2$ and $l=3,$ and  the fact $t^{-\frac{1}{2}}\leq t^{-\frac{1}{3}}$ for $t>1,$  to conclude that
    \begin{equation}
        \|[e^{-itH}P_{ac}]_{s,r}\| \leq C  t^{-\frac{1}{3}}
    \end{equation}
    for some constant \( C \),  since by the Lemma \ref{ubound} \( \{\|f_{s,r}(z)\|_{\mathcal{A}_{\mathcal{M}_{L}}}\} \)  is uniformly bounded respect to \( s, r \in \mathbb{Z} .\)
    Therefore, if \( u \in \ell^{1}(\mathbb{Z}, \mathcal{M}_{L}) \), it follows that for all \( s \in \mathbb{Z} \),
    $$ \|(e^{-itH}P_{ac} u)(s)\|_{\mathcal{M}_{L}} \leq \sum_{r \in \mathbb{Z}} \|[e^{-itH}]_{s,r} u(r)\|_{\mathcal{M}_{L}} \leq C t^{-\frac{1}{3}} \left( \sum_{r \in \mathbb{Z}} \|u(r)\|_{\mathcal{M}_L} \right) = C t^{-\frac{1}{3}} \|u\|_{\ell ^{1}}. $$
\end{proof}

\color{black}

\section*{Acknowledgement}

This work was supported  by CONACYT, FORDECYT-PRONACES 429825/2020 and PAPIIT-DGAPA-UNAM  IN114925. Furthermore, G. Franco C\'ordova received funding from the DAAD. M. Ballesteros is a fellow of SNII, SECIHTI.

\bibliographystyle{abbrv} 
\bibliography{research_bibliography}
\end{document}